\def\R{\mathbb R}
\def\Z{\mathbb Z}
\def\T{\mathbb T}
\def\N{\mathbb N}
\def\cB{\mathcal B}
\def\cC{\mathcal C}
\def\cP{\mathcal P}
\def\cQ{\mathcal Q}
\def\d{\partial}
\def\t{\dot}
\def\Id{{\rm Id}}
\renewcommand\div{{\rm div}\,}
\renewcommand\lim{{\rm lim}\,}
\renewcommand\sup{{\rm sup}\,}
\renewcommand\inf{{\rm inf}\,}
\renewcommand\log{{\rm log}\,}
\renewcommand\det{{\rm det}\,}
\newcommand{\with}{\quad\!\hbox{with}\!\quad}
\newcommand{\andf}{\quad\!\hbox{and}\!\quad}
\newcommand{\Sum}{\displaystyle \sum}
\def\dr{\delta\!\rho}
\def\du{\delta\!u}
\def\dv{\delta\!v}
\def\dw{\delta\!w}
\def\dA{\delta\!A}
\def\dP{\delta\!P}
\def\dQ{\delta\!Q}
\def\dX{\delta\!X}
\newtheorem{theorem}{Theorem}[section]
 \newtheorem{lemma}[theorem]{Lemma}
 \newtheorem{proposition}[theorem]{Proposition}
 \theoremstyle{definition}
 \newtheorem{definition}[theorem]{Definition}
 \theoremstyle{remark}
 \newtheorem{remark}[theorem]{Remark}
 \numberwithin{equation}{section}
\newcommand{\wt}{\widetilde}
\newcommand{\eps}{\varepsilon}
\newcommand{\abs}[1]{\left\vert#1\right\vert}
\newcommand{\norm}[1]{\Vert#1\Vert}
\DeclareMathOperator*{\esssup}{ess\,sup\,}
\DeclareMathOperator*{\essinf}{ess\,inf\,}
\begin{document}

\title[Two-dimensional inhomogeneous   Navier-Stokes equations]
{Global well-posedness for 2D    inhomogeneous viscous flows with rough data
via dynamic interpolation}
\author{ Rapha\"el Danchin}
\begin{abstract} We consider the evolution of two-dimensional incompressible 
flows with variable  density, only bounded and bounded away from zero.
Assuming  that the initial velocity belongs to a  suitable critical subspace of $L^2,$
 we prove a global-in-time existence and 
stability  result for the initial  (boundary) value problem. 

Our proof relies on new time decay estimates   for  finite energy   weak  solutions and
on a `dynamic interpolation' argument.
 We show  that the constructed solutions have a uniformly  $C^1$ flow, which  
ensures the propagation of geometrical structures in the fluid and  guarantees
 that the Eulerian and  Lagrangian formulations
of the equations are equivalent.  By adopting this latter formulation, we 
establish  the uniqueness of the solutions 
for prescribed data, and  the continuity of the flow map in an energy-like functional framework. 

In contrast with prior works, our results hold true in the critical regularity setting 
 \emph{without any smallness assumption.} 
Our approach uses only elementary tools and applies indistinctly  to the cases  
where the fluid domain is the whole plane, a smooth two-dimensional  bounded domain or the  torus.
\end{abstract}
\date{}
\keywords{Critical regularity, uniqueness, global solutions, inhomogeneous Navier-Stokes equations, rough density}
\subjclass[2010]{35Q30, 76D03, 76D05}

\maketitle
\section*{Introduction}

A huge literature has been devoted to the mathematical analysis of the  
Navier-Stokes equations that govern the evolution of the velocity field $u=u(t,x)$ and 
pressure function $P=P(t,x)$ of homogeneous incompressible viscous flows
in a domain $\Omega$ of $\R^d.$  Recall that these equations read
 \begin{equation*}
\left\{\begin{aligned}
 &u_{t}+\div(u\otimes u)-\mu\Delta u+\nabla P=0&&\quad\hbox{in }\ \R_+\times\Omega,  \\
&\div u=0&&\quad\hbox{in }\ \R_+\times\Omega,\\
&u|_{t=0}=u_0&&\quad\hbox{in }\Omega,
\end{aligned}\right.\leqno(NS)\end{equation*}
and, if $\Omega$ has a boundary, are supplemented with homogeneous Dirichlet boundary conditions for the velocity.
\smallbreak 
The global existence theory  for (NS) originates from  the 
paper \cite{Leray2} by J. Leray in 1934.  In the case $\Omega=\R^3,$ by combining   the   energy balance associated to (NS):
\begin{equation}\label{eq:L2NS}
\frac12\|u(t)\|_{L^2}^2+\mu\int_0^t\|\nabla u\|_{L^2}^2\,d\tau =  \frac12 \|u_0\|_{L^2}^2,
\end{equation}
with compactness arguments, he  constructed for any  divergence free 
$u_0$ in $L^2(\R^3;\R^3)$    a global  distributional solution of (NS) 
satisfying \eqref{eq:L2NS} \emph{with an inequality} (viz. the left-hand side is bounded by the right-hand side). 
\smallbreak
It is by now well understood that Leray's result is true  in any open subset $\Omega$ of $\R^d$ with $d=2,3$
(see for instance the first part of \cite{CDGG}). 
However, despite the numerous papers devoted to the topics   and  significant recent progresses, the question of uniqueness of finite
energy solutions in the case  $d=3$ has not been completely solved yet.  
The two-dimensional situation is much better understood:  finite energy solutions 
are unique and do satisfy \eqref{eq:L2NS} with an equality. 
Although uniqueness in dimension two could be hinted from another paper by J. Leray \cite{Leray} in 1934, 
it has been  established only  in 1959  by O.A. Ladyzhenskaya  \cite{Lady}, and  J.-L. Lions and G.~Prodi  \cite{LP}. 
\medbreak
In the present paper, we are concerned with \emph{inhomogeneous}, 
that is,  with variable density, incompressible viscous flows. The evolution of  these flows that 
can be encountered  in models of geophysics or mixtures, is   
often described by  the following \emph{inhomogeneous incompressible Navier-Stokes equations}:
 \begin{equation*}
\left\{\begin{aligned}
&\rho_{t}+\div(\rho u)=0\quad&\hbox{in }\ \R_+\times\Omega,  \\
 &(\rho u)_{t}+\div(\rho u\otimes u)-\mu\Delta u+\nabla P=0\quad&\hbox{in }\ \R_+\times\Omega,  \\
&\div u=0\quad&\hbox{in }\ \R_+\times\Omega. 
\end{aligned}\right.\leqno(INS)
\end{equation*}
Above, $u$ and $P$ still denote the velocity and the pressure, respectively, and $\rho=\rho(t,x)$
stands for the density that for obvious physical reasons  has to be  nonnegative. 
If we supplement (INS) with initial data and boundary conditions: 
\begin{equation}\label{eq:bc}
\rho|_{t=0}=\rho_0,\quad  u|_{t=0}=u_0\andf u|_{\partial\Omega}=0,\end{equation}
then  the energy balance  associated to (INS) reads: 
\begin{equation}\label{eq:L2INS}
\frac12\|(\sqrt\rho\,u)(t)\|_{L^2}^2+\mu\int_0^t\|\nabla u\|_{L^2}^2\,d\tau =  \frac12 \|\sqrt{\rho_0}\,u_0\|_{L^2}^2.
\end{equation}
The divergence free condition ensures that  the Lebesgue norms of $\rho$ are conserved, 
  and that 
\begin{equation}\label{eq:rho} \forall t\in\R_+,\quad
\underset{x\in\Omega}{\inf}\rho(t,x) =\underset{x\in\Omega}{\inf}\rho_0(x) 
\andf  \underset{x\in\Omega}{\sup}\rho(t,x) =\underset{x\in\Omega}{\sup}\rho_0(x).
\end{equation}
In the torus case, we have in addition the conservation of total momentum: 
\begin{equation}\label{eq:rhou}
\int_{\T^2} (\rho u)(t,x)\,dx = \int_{\T^2} (\rho_0 u_0)(x)\,dx.
\end{equation} 
Like (NS), equations (INS) have a scaling invariance (if $\Omega$ is stable by dilation): 
they are  invariant for all $\lambda>0$ by the transform:
\begin{equation}\label{eq:scaling}
(\rho,u, P)(t,x) \leadsto(\rho, \lambda u,\lambda^2 P)(\lambda^2t,\lambda x).
\end{equation}
Although (INS) is of hyperbolic-parabolic type while (NS) is parabolic,  similar results  hold true
for the initial value  (or boundary value) problem. For instance: 
\begin{itemize}
\item  In any dimension  and provided $\rho_0$ is bounded and nonnegative, and 
$\sqrt{\rho_0}\,u_0$ is in $L^2,$ there exists a global weak solution satisfying \eqref{eq:L2INS} with 
inequality\footnote{First proved by A.V.  Kazhikov in \cite{AVK} if $\rho_0>0,$ then for general  
$\rho_0\geq0$ by J. Simon \cite{JS}. In \cite{PLL},  P.-L. Lions pointed out that the density is a renormalized solution of the mass equation,  and treated density dependent viscosity coefficients. He also considered
 unbounded densities.}.
\item Smooth enough data with density bounded and bounded away from zero generate 
 a unique local-in-time smooth solution, which  is global in the two-dimensional case,
or in higher dimension if the initial   velocity is small\footnote{First established by  O.A.  Ladyzhenskaya and V.A. Solonnikov in \cite{LS}.}.
\end{itemize}
In dimension two, the  quantities  that come into play
in the energy balance \eqref{eq:L2INS}  are scaling invariant in the sense of \eqref{eq:scaling}. 
However, unlike  the case with constant density,  it is not known whether 
 finite energy two-dimensional weak solutions with bounded density, albeit having critical regularity,  are unique.

In order to  explain the difference between the variable and constant density cases  
and to motivate the assumptions that  will be made in this paper, let us sketch the proof of the uniqueness 
of finite energy solutions for (NS)  in dimension two. 
Assume that we are given two solutions $(u,P)$ and $(\wt u,\wt P)$   pertaining 
to the same finite energy initial velocity $u_0.$ 
Then,   $\du:=\wt u-u$ and $\dP:=\wt P-P$ satisfy
$$\left\{\begin{aligned}
 &\du_{t}+\div(u\otimes\du)-\mu\Delta \du+\nabla \dP=-\div(\du\otimes\wt u)\quad&\hbox{in }\ \R_+\times\Omega,  \\
&\div \du=0\quad&\hbox{in }\ \R_+\times\Omega.
\end{aligned}\right.$$
Taking the $L^2(\Omega;\R^2)$ scalar product with $\du,$ integrating by parts where needed and using H\"older inequality 
to bound the right-hand side yields
$$
\frac12\frac d{dt}\|\du\|_{L^2}^2+\mu\|\nabla\du\|_{L^2}^2 \leq \|\nabla\wt u\|_{L^2}\|\du\|_{L^4}^2,
$$
which, in light of  the celebrated  Ladyzhenskaya  inequality
\begin{equation}\label{eq:lad}
\|z\|_{L^4}^2\leq C\|z\|_{L^2}\|\nabla z\|_{L^2}
\end{equation}
leads to 
$$\begin{aligned}
\frac12\frac d{dt}\|\du\|_{L^2}^2+\mu\|\nabla\du\|_{L^2}^2 &\leq C\|\nabla \wt u\|_{L^2}\|\du\|_{L^2}
\|\nabla\du\|_{L^2}\\&\leq\frac\mu2\|\nabla\du\|_{L^2}^2+\frac{C^2}{2\mu}\|\nabla \wt u\|_{L^2}^2\|\du\|_{L^2}^2.
\end{aligned}$$
At this stage, Gronwall lemma allows to conclude that 
$$\|\du(t)\|_{L^2}^2+\mu\int_0^t\|\nabla\du\|_{L^2}^2 \,d\tau\leq 
e^{\frac{C^2}{\mu}\int_0^t\|\nabla\wt u\|_{L^2}^2\,d\tau}\|\du(0)\|_{L^2}^2.$$
Owing to \eqref{eq:L2NS}, the exponential term if finite. Hence we have
 $\du\equiv0$ if $\wt u(0)=u(0).$
\medbreak
In contrast, when  comparing two finite energy  solutions $(\rho,u,P)$ and $(\wt\rho,\wt u,\wt P)$ of (INS),
we get the following system for  $\dr:=\wt\rho-\rho,$ $\du$ and $\dP$: 
$$\left\{\begin{aligned}
&\dr_t+\div(\dr\,u) =-\div(\wt\rho\,\du),\\
 &(\rho\du)_{t}+\div (\rho u\otimes \nabla \du)-\mu\Delta \du+\nabla \dP=-(\dr \,\wt u)_t-\div(\rho u\otimes  \du)
 -\div(\rho\du\otimes \wt u),  \\
&\div \du=0.
\end{aligned}\right.$$
Since $\wt\rho$ is only bounded, the first line is a transport equation by the divergence free vector-field $u,$ 
with a source term that has (at most)  the regularity $C^{-1}$ with respect to the space variable. 
Now, in order to control the propagation of negative regularity in a transport
equation, we need
\begin{equation}\label{eq:Lip}
\nabla u\in L^1_{loc}(\R_+;L^\infty).\end{equation}
However, this property generally fails for finite energy solutions of (INS) and even for the  two-dimensional heat equation. 
 In fact,   the set of functions $u_0$ so that the solution $u$ to the free heat equation with initial data $u_0$ satisfies 
  $\nabla u\in L^1(\R_+;L^\infty)$ is the homogeneous Besov space $\dot B^{-1}_{\infty,1},$ and  $L^2$ is  not  embedded
in this space. 

To avoid working in spaces with negative regularity, one can recast
(INS) in the Lagrangian coordinates system as in  \cite{DM1}. Then,  the density becomes time independent and 
the velocity equation keeps its parabolicity (at least for small time).  
However, the equivalence between the Eulerian and Lagrangian formulations of (INS) in our low regularity context
still requires \eqref{eq:Lip}, a property that   cannot be expected if $u_0$ is only in $L^2$
since it fails for  the heat flow.

To make a long story short,  it is not clear that uniqueness holds for (INS) in the framework of 
just finite energy solutions.
\medbreak
Before describing in more detail the main objective of the article, let us recall some recent results on the
 well-posedness theory for (INS). 
A number of works have been devoted to this issue  under weaker assumptions 
than in \cite{LS}.  This is mainly to relax the positivity condition on the density or the regularity assumptions on 
the initial data. 
Regarding  the first question, it has been observed by  Y. Cho and H. Kim in \cite{CK} 
that  (INS) is  well-posed  for  smooth enough data and, possibly, vanishing  densities satisfying a suitable compatibility condition. 
Recently, J. Li in \cite{JLi} discovered that this condition is no longer  needed if one  considers
$H^1$ regularity for the velocity, and  the full well-posedness theory for general only bounded (not necessarily positive) 
initial densities and $H^1$ velocities has been carried out in a joint work with P.B. Mucha~\cite{DM1}. 
\smallbreak
Regarding  the minimal regularity requirement of the velocity  for  well-posedness, the scaling invariance 
of (INS) pointed out in \eqref{eq:scaling}  suggests (if $\Omega=\R^d$)  to take
$\rho_0\in L^\infty(\R^d)$ and $u_0\in\dot H^{\frac d2-1}(\R^d).$
In the constant density case and for $d=3,$  this assumption is in accordance with the well-known Fujita and Kato theorem 
\cite{FK}.  However as, again, $\nabla e^{t\Delta} u_0$ need not be in $L^1_{loc}(\R_+;L^\infty)$
if  $u_0\in\dot H^{\frac d2-1}(\R^d)$ then  it is not clear that uniqueness may be achieved if no additional regularity, in the
variable density case.  In this direction, it has been proved  in  \cite{DR2003,DR2004} that if 
 $u_0$ belongs to the homogeneous Besov space $\dot B^{\frac d2-1}_{2,1}(\R^d),$ a large subspace of
 $\dot H^{\frac d2-1}(\R^d)$ with the same scaling invariance,   then (INS) 
 is globally well-posed in dimension two (or in higher dimension  if $u_0$  is small)
 \emph{provided   $\rho_0$ is close to some positive constant in the homogeneous Besov space $\dot B^{\frac d2}_{2,1}(\R^d).$}
  This result is satisfactory as regards the regularity requirement for the velocity, since it is critical and closely related 
 to the $L^2$ space,  but the condition  on  the density is  rather restrictive both 
 because $\rho_0$   has to be almost constant and since it has to be 
 continuous (the space  $\dot B^{\frac d2}_{2,1}(\R^d)$  is embedded in the set   $\cC_b(\R^d)$ of bounded and continuous functions on $\R^d$). 
The result of \cite{DR2003} has been significantly improved recently in the two-dimensional case:   H. Abidi and G. Gui \cite{AG} 
established the global well-posedness   without any smallness condition on the data   if $\rho_0-1$ is in $\dot B^{1}_{2,1}(\R^2)$  and 
$u_0$ belongs to $\dot B^0_{2,1}(\R^2).$   The corresponding result in dimension three has been 
obtained with completely different techniques by H. Xu in \cite{Xu} (for small $u_0$  of course).   As said before, 
works based on the use of critical  Besov spaces for the density precludes considering the case
of densities that are  discontinuous along an interface, a situation which   is of particular interest  if one 
believes (INS) to be a relevant  model for mixtures of incompressible viscous flows with different densities.
This very situation, that is sometimes called \emph{the density patch problem} has been extensively studied
lately, see e.g. \cite{LZ,DM1,GG}. 

Well-posedness results for only bounded  initial  density, bounded
away from zero, and  smooth enough  velocity  have been obtained  in a joint work with P.B. Mucha \cite{DM4}, 
then improved   by M. Paicu, P. Zhang and Z. Zhang in \cite{PZZ}
 (there, $u_0$ is in $H^s(\R^2)$ for some $s>0$ if $d=2,$
and in $H^1(\R^3)$ if $d=3$).  In the whole space case, the critical regularity index has been reached 
 in an intriguing work  by P.~Zhang  \cite{Zhang19}. He  established 
the global existence for any small enough divergence free $u_0$ with coefficients in $\dot B^{\frac12}_{2,1}(\R^3)$ 
while  $\rho_0$ is only  bounded and bounded away from zero. It has been 
observed recently  in a joint work with S. Wang \cite{DW} that Zhang's solutions actually 
satisfy \eqref{eq:Lip}, and are thus unique. 
\medbreak
The main  goal of the present paper is to investigate    the counterpart 
\emph{in dimension two and for large initial data} of P. Zhang's result recalled  just above:
we want to establish a global well-posedness result  for  general  divergence-free velocity fields $u_0$ with  critical regularity of $L^2$ type 
and   densities  $\rho_0$ just satisfying:
\begin{equation}\label{eq:rho00}
\rho_*:=\underset{x\in\Omega}{\essinf} \,\rho_0(x)>0\andf \rho^*:=\underset{x\in\Omega}{\esssup} \,\rho_0(x)<\infty.
\end{equation}
According to \cite{AG}, a good candidate to achieve the  Lipschitz property within a critical regularity framework of $L^2$ type is the space  $\dot B^0_{2,1}.$  However, owing to the use of  Fourier analysis techniques,   
rather strong regularity assumptions on the density were made in \cite{AG}. 
Here, since we want to consider only bounded densities,  we shall adopt a completely different approach. 
In fact,   we shall combine real interpolation and  three levels of time decay estimates  
(corresponding to $\dot H^{-1},$ $L^2$ and 
 $\dot H^1$ data, respectively) for a linearized version of (INS)  that can be obtained 
 just by energy arguments, and basic properties of the Stokes system, 
 so as to work out a space for $u_0$  that coincides with  $\dot B^0_{2,1}$
if $\rho_0$ is smooth (but that might depend on it if it is not).
The overall  strategy  is so robust that it can  be adapted  to other systems. 
\smallbreak
The rest of the paper is structured as follows:  in the next section 
we state our main results and explain the key steps of the proof. 
Then, in Section \ref{s:weak}, we establish  a first family of time decay estimates pertaining to the case where $u_0$ is just in $L^2,$
and construct  corresponding global finite energy  weak solutions for (INS). 
The next section is devoted to proving more a priori  decay estimates. The final goal is to establish that  under 
a slightly stronger assumption on the initial velocity, very close to the regularity $\dot B^0_{2,1},$ 
the Lipschitz property \eqref{eq:Lip} is satisfied. 
  Finally, we establish in Section \ref{s:proof} the existence and  uniqueness
of a solution under this assumption, assuming only \eqref{eq:rho00} 
 and that  the velocity belongs to the  aforementioned space.  
 The same method also provides stability estimates for the flow map, in the energy space. 
\medbreak
\noindent{\bf Notation:}  In the rest   of the paper, $\Omega$ will be either  a $C^2$ bounded domain of $\R^2,$ 
   a two-dimensional torus, or $\R^2.$  It will be convenient to use the  same notation $\dot H^s(\Omega)$
 to designate:
 \begin{itemize}
 \item[--]   the classical homogeneous Sobolev space if  $\Omega=\R^2,$ 
 \item[--]  the subset of functions of $H^s$ with mean value $0$ if $\Omega=\T^2,$
 \item[--]   the space $H^s_0(\Omega)$ (that is the  completion  of $\cC^\infty_c(\Omega)$ for the $H^s(\R^2)$ norm) if $\Omega$
 is a bounded domain and $s\in[0,1]$;
 \item[--]  the dual of $H^{-s}_0(\Omega)$   if $\Omega$  is a bounded domain and $s\in[-1,0].$
\end{itemize}
 We  designate  by    $L^2_\sigma(\Omega)$ the set of  divergence free vector-fields with  
  coefficients in $L^2(\Omega)$  (such that $u_0\cdot n=0$ at $\d\Omega$ in the bounded domain case, 
  with $n$ being the unit exterior normal vector to $\d\Omega$), and denote by $\cP$
  the orthogonal projector from $L^2(\Omega;\R^2)$ to $L^2_\sigma(\Omega).$

For any normed space $X,$  Lebesgue index $q\in[1,\infty]$ and time $T\in[0,\infty],$ 
we shall denote $\|z\|_{L^q_T(X)}:= \bigl\| \|z(t)\|_X\|_{L^q(0,T)}$  and omit $T$ if it is $\infty.$
In the case where $z$ has several components  in $X,$ we  keep the same notation for the norm. 

  As usual,  $C$ designates harmless positive real numbers, and we shall often write
$A\lesssim B$ instead of $A\leq CB.$ To emphasize the dependency with respect to parameters $a_1,\cdots,a_n,$
we adopt the notation $C_{a_1,\cdots,a_n}.$ 
The notation $C_{\rho,v}$ stands for various `constants'  that only depend (algebraically) on the infimum and supremum of $\rho$  and on `energy-like' norms of $v,$ that is,  \emph{on norms that could be eventually bounded by $\|u_0\|_{L^2}$ if $(\rho,v)$
were a solution to (INS).} Obvious examples  are  $\|v\|_{L^\infty(L^2)}$ or $\|\nabla v\|_{L^2(L^2)}$
(remember \eqref{eq:L2INS}) but also $\|v\|_{L^4(L^4)}$ (use \eqref{eq:lad}) and so on.

\medbreak\noindent{\bf Acknowledgments.}  The author is indebted 
to P. Auscher for clarifying some properties of the real interpolation space  in which 
the initial velocity is taken, and to the anonymous referee for insightful remarks.

 
 \section{Results and strategy} \label{s:results}

 The first step  is to exhibit  time decay estimates 
 for finite energy solutions. More precisely, we shall establish  the following  statement:
  \begin{theorem} \label{thm:1} 
   Let $u_0$ be in $L^2_\sigma(\Omega)$ 
    and $\rho_0$ satisfy \eqref{eq:rho00}. 
   Then, (INS) supplemented with \eqref{eq:bc}
admits a global solution $(\rho,u,P)$ satisfying \eqref{eq:rho} (and \eqref{eq:rhou} if $\Omega=\T^2$), 
 $u\in L^\infty(\R_+;L^2_\sigma),$ 
  $\nabla u\in L^2(\R_+\times\Omega),$  and 
  \begin{equation}\label{ineq:L2INS}
\frac12\|(\sqrt\rho\,u)(t)\|_{L^2}^2+\mu\int_0^t\|\nabla u\|_{L^2}^2\,d\tau \leq \frac12 \|\sqrt{\rho_0}\,u_0\|_{L^2}^2,\qquad t>0.
\end{equation}
Furthermore, there exists a constant $C$ depending only on  $\Omega,$ $\rho_*$ and $\rho^*$ such that for all $t>0,$ we have
$$\begin{aligned}
\|\nabla^k u(t)\|_{L^2} &\leq C(\mu t)^{-k/2} \|u_0\|_{L^2} \quad\hbox{for }\ k=0,1,2,\\
\|\nabla^k(u_t,\dot u)(t)\|_{L^2}   &\leq C(\mu t)^{-1-k/2} \|u_0\|_{L^2} \quad\hbox{for }\ k=0,1,\\
\|\nabla P(t)\|_{L^2}&\leq Ct^{-1}\|u_0\|_{L^2},
\end{aligned}$$
where $\dot u$ denotes the convective derivative of $u,$ that is, $\dot u:=u_t+u\cdot\nabla u.$
 \end{theorem}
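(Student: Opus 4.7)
Constructing a global weak solution satisfying \eqref{eq:rho}, \eqref{eq:rhou} and \eqref{ineq:L2INS} is by now classical (Kazhikov, Simon, P.-L.~Lions). My plan is to produce such a solution via a standard approximation scheme---smoothing $\rho_0$ into some $\rho_0^\eps\geq\rho_*/2$ and $u_0$ into a smooth divergence-free $u_0^\eps\to u_0$ in $L^2$, and solving at fixed $\eps$ by Faedo--Galerkin or parabolic regularization---so that all the decay bounds in the theorem can first be derived uniformly in $\eps$ on these smooth approximations, and then transferred by weak lower semicontinuity.

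\textbf{First decay level ($k=0,1$ bounds).} The driving idea is to test the momentum equation $\rho\dot u=\mu\Delta u-\nabla P$ with the convective derivative $\dot u:=u_t+u\cdot\nabla u$. Since $\div u=0$, one has $\int_\Omega\nabla P\cdot u_t\,dx=0$, and integration by parts in the viscous term produces a $(\mu/2)\,d/dt\,\|\nabla u\|_{L^2}^2$ contribution. The nonlinear remainder is handled by viewing $-\mu\Delta u+\nabla P=-\rho\dot u$ as a stationary Stokes system with $L^\infty$ coefficient $\rho$, which yields the $L^2$ Stokes estimate
$$\|\nabla^2 u\|_{L^2}+\|\nabla P\|_{L^2}\lesssim\|\dot u\|_{L^2},$$
valid in each of the three geometries with constants depending only on $\rho_*,\rho^*,\mu,\Omega$. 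Combining with the two-dimensional Ladyzhenskaya inequality \eqref{eq:lad} applied to both $u$ and $\nabla u$ allows the remainder to be absorbed into $\frac12\|\sqrt\rho\,\dot u\|_{L^2}^2$ plus a term of type $C_{\rho,u}\|\nabla u\|_{L^2}^2\,g(t)$ with $g\in L^1(\R_+)$ thanks to \eqref{eq:L2INS}. Multiplying by $t$, integrating on $[0,t]$ and applying Gronwall give
$$t\|\nabla u(t)\|_{L^2}^2+\int_0^t\tau\|\sqrt\rho\,\dot u(\tau)\|_{L^2}^2\,d\tau\leq C\|u_0\|_{L^2}^2,$$
and the Stokes inequality immediately promotes this to the $k=2$ bound on $u$ and to $\|\nabla P(t)\|_{L^2}\lesssim t^{-1}\|u_0\|_{L^2}$.

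\textbf{Second decay level (the $\dot u$ bounds).} I apply the material derivative $\mathcal{D}:=\partial_t+u\cdot\nabla$ to $\rho\dot u=\mu\Delta u-\nabla P$; using $\mathcal{D}\rho=0$ (mass equation) and $\div u=0$, this yields
$$\rho\,\mathcal{D}\dot u-\mu\Delta\dot u+\nabla(\mathcal{D}P)=\mu[\mathcal{D},\Delta]u+[\nabla,\mathcal{D}]P,$$
where the commutators involve only $\nabla u$, $\nabla^2 u$ and $\nabla P$ (already controlled by the previous step). Testing with $\dot u$ and integrating by parts produces a schematic inequality
$$\frac12\frac d{dt}\|\sqrt\rho\,\dot u\|_{L^2}^2+\mu\|\nabla\dot u\|_{L^2}^2\leq C_{\rho,u}\bigl(\|\nabla u\|_{L^4}^2+\|\nabla u\|_{L^2}^2\bigr)\|\dot u\|_{L^2}^2+(\text{absorbable}).$$
Weighting by $t^2$ and integrating, together with the $t$-weighted bounds of the previous step (which via \eqref{eq:lad} provide $\int_0^t\tau\|\nabla u\|_{L^4}^4\,d\tau\lesssim\|u_0\|_{L^2}^4$), yields $t\|\dot u(t)\|_{L^2}\lesssim\|u_0\|_{L^2}$ together with $\int_0^t\tau^2\|\nabla\dot u(\tau)\|_{L^2}^2\,d\tau\lesssim\|u_0\|_{L^2}^2$. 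The bounds on $u_t$ and $\nabla\dot u$ then follow from $u_t=\dot u-u\cdot\nabla u$ combined with one last application of Stokes regularity.

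\textbf{Main obstacle.} The hard part is not the estimates themselves---which are standard Hoff-type energy manipulations suitably weighted in time---but their rigorous justification when $\rho$ is only $L^\infty$. Three points deserve care: (a) the approximation scheme must keep the Stokes inequality uniform in the smoothing parameter, which follows from the classical $L^2$ theory of the stationary Stokes system on $\R^2$, $\T^2$ and $C^2$ bounded domains with no-slip conditions; (b) the approximate velocities need enough regularity for $\dot u$ and $\mathcal{D}\dot u$ to be legitimate test functions, which holds at each fixed $\eps$ by parabolic smoothing; (c) the passage to the limit must preserve the pointwise-in-$t$ nature of the decay, achieved via Aubin--Lions strong compactness in $L^2_{\rm loc}$ combined with weak lower semicontinuity of the weighted $L^2$ norms on the left-hand sides.
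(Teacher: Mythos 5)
Your overall strategy (time‑weighted Hoff‑type energy estimates combined with $L^2$ Stokes regularity, performed uniformly on smoothed approximations and transferred by compactness and lower semicontinuity) is indeed the paper's, but as written the argument has two genuine gaps. First, the theorem claims the \emph{pointwise} decay $\|\nabla(u_t,\dot u)(t)\|_{L^2}\leq Ct^{-3/2}\|u_0\|_{L^2}$, whereas your scheme only yields the time‑integrated bound $\int_0^t\tau^2\|\nabla\dot u\|_{L^2}^2\,d\tau\lesssim\|u_0\|_{L^2}^2$; Stokes regularity applied to $\mu\Delta u-\nabla P=\rho\dot u$ controls $\nabla^2u$ and $\nabla P$, not $\nabla\dot u$, so ``one last application of Stokes regularity'' cannot upgrade an $L^2$-in-time bound to a pointwise one. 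The paper needs a third level of weighted estimates for exactly this: it writes the equation \eqref{eq:ddotu} for $\ddot u=D_t\dot u$ (your commutator identity), tests it with $t^{3}\ddot u$, and controls $(\dot u,\nabla\dot P)$ through the inhomogeneous Stokes problem \eqref{eq:inhomo} with divergence ${\rm Tr}(\nabla v\cdot\nabla u)$, which is what produces the pointwise control of $\|t^{3/2}\nabla\dot u(t)\|_{L^2}$ in \eqref{eq:weight3/2}. Relatedly, your claim that the first‑level bound ``immediately'' gives $\|\nabla^2u(t)\|_{L^2}+\|\nabla P(t)\|_{L^2}\lesssim t^{-1}\|u_0\|_{L^2}$ is premature: at that stage you only control $\int_0^t\tau\|\sqrt\rho\,\dot u\|_{L^2}^2\,d\tau$, and the pointwise $k=2$ and pressure bounds follow only after the pointwise bound on $t\|\dot u(t)\|_{L^2}$ (or $t\|u_t(t)\|_{L^2}$) is established at the next level.

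Second, your route to that pointwise bound---applying $D_t$ to the momentum equation and testing with $t^2\dot u$---founders on the pressure term: since $\div\dot u={\rm Tr}(\nabla u\cdot\nabla u)\neq0$, the contribution $\int_\Omega\nabla(D_tP)\cdot\dot u\,dx$ neither vanishes nor belongs to the ``commutators involving only $\nabla u,\nabla^2u,\nabla P$''; after integration by parts it requires an estimate of $D_tP$ (equivalently of $P_t$) itself, not merely of pressure gradients, and nothing in your scheme provides this, so it is not ``absorbable''. The paper avoids the problem by differentiating in time and testing \eqref{eq:utt} with $t^2u_t$: there the pressure term disappears because $\div u_t=0$, and the remaining terms ($\rho_t\dot u$ and $\rho v_t\cdot\nabla u$) are handled by integration by parts, Ladyzhenskaya's inequality and the Stokes estimate, which is how the pointwise bounds on $t\|u_t(t)\|_{L^2}$, $t\|\dot u(t)\|_{L^2}$, $t\|\nabla^2u(t)\|_{L^2}$ and $t\|\nabla P(t)\|_{L^2}$ of \eqref{eq:weighttt} are obtained. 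So either switch your second level to the time‑differentiated equation tested with $t^2u_t$, or supply a genuine estimate for $D_tP$; and in either case a third, weight‑$t^{3/2}$ level in the spirit of \eqref{eq:weight3/2} is still needed to reach the stated $t^{-3/2}$ decay of $\nabla\dot u$ and $\nabla u_t$.
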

 Two remarks are in order:
 \begin{itemize} 
  \item[--] The constructed solutions satisfy more time decay estimates : see \eqref{eq:weight3}, 
  \eqref{eq:weighttt}, \eqref{eq:weight3/2}, Proposition \ref{p:decay1} with $s'=0$ and  Proposition \ref{p:decay2} with $p=2.$
 \item[--]  As pointed out in \cite{DMP}  for $H^1_0(\Omega)$  initial velocities, 
 exponential time decay estimates hold true  if $\Omega$ is bounded.
 Following the proof of Lemma 5 therein, one can show that there exists a positive constant 
  $c_\Omega$ depending only on $\Omega$ such that 
   $$ \forall t\in\R_+,\; \|(\sqrt\rho\, u)(t)\|_{L^2} \leq e^{-c_\Omega\frac{\mu t}{\rho^*}}\|\sqrt{\rho_0}\, u_0\|_{L^2}\cdotp$$
   {}From this inequality, one can  deduce 
 exponential decay  for $\|t^{k/2}\nabla^k u\|_{L^2},$ $\|t^{1+k/2}\nabla^k u_t\|_{L^2}$  and $\|t^{1+k/2}\nabla^k\dot u\|_{L^2}.$
 However, as exponential decay does not  hold if $\Omega=\R^2,$ and since
 we strive for  a unified approach, we refrain from tracking it in the rest of the paper, 
 to simplify the presentation. 
 \end{itemize} 
  As underlined in the introduction, in order to establish the uniqueness of solutions,  we need 
 a  functional space  that ensures \eqref{eq:Lip}. 
 At the same time, we want our functional framework to be critical, to allow any initial 
 density just bounded and bounded away from zero and to be strongly related to the energy space  $L^2.$
 Note that Theorem \ref{thm:1} ensures that $\nabla u$ belongs to the \emph{weak} $L^1$ space for the time
 variable with values in the Sobolev space $H^1.$ This latter space `almost' embeds in $L^\infty.$ 
  A classical way to improve embeddings is to 
 work out a space by means of \emph{real interpolation with second  parameter equal to $1.$} In our context, 
 since energy arguments play an important role,  it is natural to interpolate from Sobolev spaces and to  consider\footnote{One could prefer to interpolate between \emph{Lebesgue spaces}
 and  consider the velocity in the Lorentz space $L^{2,1}.$  However 
  we do not know how to handle (INS) in this space. The reader~is referred to \cite{D-press}
  where the space  $L^{2,1}$ is used for solving the two-dimensional system for pressureless gases. } 
 \begin{equation}\label{eq:interpo} [\dot H^{-s},\dot H^{s}]_{1/2,1}\quad\hbox{for some}\quad s\in(0,1).\end{equation}
 This definition gives the Besov space $\dot B^0_{2,1}$ (independently of the value of $s$). 
\smallbreak
  Let us shortly explain  why  in the simpler situation where $u$  is the solution of the 
 free heat equation  in $\R^2,$ supplemented with an initial data $u_0$ in $\dot B^0_{2,1},$ we do have  \eqref{eq:Lip}. 
 We start from the following two inequalities:
 \begin{equation}\label{eq:heatineq}
 t\|\nabla u(t)\|_{L^\infty}\leq C\min\bigl(t^{s/2}\|u_0\|_{\dot H^s},\:t^{-s/2}\|u_0\|_{\dot H^{-s}}\bigr)
 \end{equation}
 which may be easily derived by using the explicit formula for $u$ in the Fourier space. 
 \medbreak
 Then, we use the characterization of real interpolation spaces in terms of atomic decomposition like in e.g. \cite{LPe}. 
 In our setting, it reads $z\in \dot B^{0}_{2,1}$ if and only if  
 there exists a sequence $(z_j)_{j\in\Z}$  of $\dot H^{-s}\cap \dot H^{s}$ satisfying:
 $$z=\sum_{j\in\Z} z_j\andf \sum_{j\in\Z}\bigl(2^{-j/2} \|z_j\|_{\dot H^{s}} + 2^{j/2} \| z_j\|_{\dot H^{-s}}\bigr)<\infty.$$
The infimum  of the above sum on all admissible decompositions of $z$ defines a norm on~$\dot B^0_{2,1}.$ 
 Now, decompose $u_0$ into 
 \begin{equation}\label{eq:decompotriviale0}
 u_0=\sum_{j\in\Z} u_{0,j}\with \sum_{j\in\Z}\bigl(2^{-j/2} \|u_{0,j}\|_{\dot H^{s}} 
 + 2^{j/2} \|u_{0,j}\|_{\dot H^{-s}}\bigr)\leq 2\|u_0\|_{\dot B^0_{2,1}}\end{equation}
and solve all the heat equations
$$(u_j)_t-\Delta u_j=0,\qquad u_j|_{t=0}=u_{0,j}.$$ 
As the heat equation is linear, we have
$u=\sum_j u_j$ and thus
\begin{equation}\label{eq:decompotriviale}\int_0^\infty\|\nabla u\|_{L^\infty}\,dt\leq \sum_{j\in\Z}\int_0^\infty\|\nabla u_j\|_{L^\infty}\,dt.
\end{equation}
Now, for every $j$ in $\Z$  and $A_j>0,$ we have, due to \eqref{eq:heatineq},
$$\begin{aligned}
\int_0^\infty\|\nabla u_j\|_{L^\infty}\,dt&\leq \int_0^{A_j}\|\nabla u_j\|_{L^\infty}\,dt+\int_{A_j}^\infty\|\nabla u_j\|_{L^\infty}\,dt\\
&\lesssim \|u_{0,j}\|_{\dot H^s}\int_0^{A_j} t^{-1+s/2}\,dt+\|u_{0,j}\|_{\dot H^{-s}}
\int_{A_j}^\infty  t^{-1-s/2}\,dt\\
&\lesssim   \|u_{0,j}\|_{\dot H^s} A_j^{s/2}+\|u_{0,j}\|_{\dot H^{-s}}A_j^{-s/2}. \end{aligned}$$
Hence, choosing $A_j=2^{-j/s}$ and remembering    \eqref{eq:decompotriviale0} and \eqref{eq:decompotriviale}
gives \eqref{eq:Lip} (globally in time). 
 
 This  `dynamic interpolation approach' has been used before by T. Hmidi and S. Keraani in \cite{HK} for the transport
 equation and  by P. Zhang in \cite{Zhang19} for the velocity equation of (INS) (in dimension $3$ and for small velocities). In both 
 cases however, the initial data was decomposed according to a Littlewood-Paley decomposition. The additional flexibility 
 that consists here in using general atomic decompositions enables us  to do without Fourier analysis and to treat general domains.  
 
As our aim is to prove \eqref{eq:Lip} for (INS), we have to consider instead of the heat equation a  linear system which captures both the effects of the density and of the convection. 
To this end, we consider  
 \begin{equation}\label{eq:LINS0}
\left\{\begin{aligned}
 &(\rho u)_{t}+\div(v\otimes  u)- \Delta u+\nabla P=0\quad&\hbox{in }\ \R_+\times\Omega, \\
&\div u=0\quad&\hbox{in }\ \R_+\times\Omega,\\
&u|_{t=0}=u_{0}\quad&\hbox{in }\ \Omega,
\end{aligned}\right.
\end{equation}
where the (smooth enough) triplet   $(\rho,v,u_0)$ is  given  with $\rho$ bounded and bounded away from zero, 
\begin{equation}\label{eq:condrhov}
\rho_{t}+\div (\rho v)=0,\quad \div v=0\andf v|_{\partial\Omega}=0.
\end{equation}
Clearly, if we succeed in proving \eqref{eq:heatineq} for \eqref{eq:LINS0}  with a constant that only 
depends on $\rho_*,$ $\rho^*$ and of energy-like norms of $v,$ then repeating the above dynamic interpolation 
procedure will yield  \eqref{eq:Lip} for the solutions of \eqref{eq:LINS0} supplemented with initial data
in $\dot B^0_{2,1},$ then for (INS) if taking $v=u.$  

The way to get \eqref{eq:heatineq} is to prove beforehand three families of time 
weighted estimates for \eqref{eq:LINS0} corresponding  to initial data $u_0$ in $L^2,$
$\dot H^1$ and $\dot H^{-1},$ respectively.   The estimate in $\dot H^{-1}$
will be obtained by duality from the  estimate in $\dot H^1.$  This will lead us to consider  
the  backward  system associated with  \eqref{eq:LINS0} and  it  is rather  $\|\cP(\rho u)(t)\|_{\dot H^{-1}}$
and, more generally,  $\|\cP(\rho u)(t)\|_{\dot H^{-s}}$ for $s\in (0,1)$ that can be estimated. 
In the end,  combining the three families of inequalities with suitable Gagliardo-Nirenberg inequalities yields
 instead of \eqref{eq:heatineq}, 
\begin{equation}\label{eq:INSineq}
 t\|\nabla u(t)\|_{L^\infty}\leq C_{\rho,v} \min\bigl(t^{s/2}\|u_0\|_{\dot H^s},\, t^{-s/2}\|\cP(\rho_0 u_0)\|_{\dot H^{-s}}\bigr)\cdotp
 \end{equation}
Above, $C_{\rho,v}$ only depends on $\rho_*,$ $\rho^*$ and on energy-like norms of $v.$ 
\medbreak
As a consequence, the suitable  interpolation space  to carry out our  dynamic interpolation
procedure for \eqref{eq:LINS0}  is the one that is given in the following definition: 
\begin{definition}\label{def:espacepourri} Let $s$ be in $(0,1)$ and $a$ be a   measurable function  on $\Omega$ with positive
lower bound.
We denote by    $\wt B^{0,s}_{a,1}(\Omega)$  the set  of   vector-fields $z$  in  $L^2_\sigma(\Omega)$  
   such that there exists a sequence $(z_j)_{j\in\Z}$  of $L^2_\sigma(\Omega)$ satisfying:
\begin{itemize}
\item[---] $z=\sum_{j\in\Z} z_j$ in the sense of distributions,
\item[---] for all $j\in\Z,$ there holds  $\cP(az_j)\in\dot H^{-s}(\Omega)$ and $z_j\in\dot H^{s}(\Omega),$
\item[---]  $\sum_{j\in\Z}\bigl(2^{-j/2} \|z_j\|_{\dot H^{s}} + 2^{j/2} \|\cP(a z_j)\|_{\dot H^{-s}}\bigr)$ is finite.
\end{itemize}
The infimum  on all admissible decompositions of $z$ defines a norm on~$\wt B^{0,s}_{a,1}(\Omega).$ 
\end{definition}
Let us highlight a few properties of these spaces.
\begin{itemize}
\item The family  $(\wt B^{0,s}_{a,1}(\Omega))_{s\in(0,1)}$ is a family of nested Banach spaces:
if $0<s'<s<1,$ then $\wt B^{0,s}_{a,1}(\Omega)\hookrightarrow \wt B^{0,s'}_{a,1}(\Omega).$
\item 
Owing to \eqref{eq:interpo},  if  $a$ is a positive constant,
 then $\wt B^{0,s}_{a,1}$  is nothing than $\dot B^0_{2,1},$  and if  $a$ has a positive lower bound $a_*,$  then 
 it embedded in $L^2.$  
  Indeed,  decomposing $z\in\wt B^{0,s}_{a,1}$  according to Definition \ref{def:espacepourri}
  and using the fact that $\cP$ is a $L^2$ orthogonal projector, one may write for all $j\in\Z,$
   \begin{equation}\label{eq:L2zj}
    \|z_j\|_{L^2}^2\leq a_*^{-1} \int_\Omega \cP(az_j) \cdot z_j\,dx \leq  a_*^{-1}\bigl(2^{j/2} \|\cP(a z_j)\|_{\dot H^{-1/2}}\bigr)
  \bigl(2^{-j/2} \|z_j\|_{\dot H^{1/2}}\bigr),\end{equation}
 which implies, by Young inequality,  that 
 $$ \|z\|_{L^2}\leq \frac1{2\sqrt{a_*}}\,\|z\|_{\wt B^{0,s}_{a,1}}.$$
 \item 
If  $a$ is  bounded and $s=2/p-1$ for some $p\in(1,2),$ then the critical Besov space  
$\dot B^{-1+2/p}_{p,1}:= [L^{p},\dot W^{2s}_p]_{1/2,1}$
 is embedded in $\wt B^{0,s}_{a,1}.$ Indeed,    if $z\in \dot B^{-1+2/p}_{p,1},$ then there exists a sequence
 $(z_j)_{j\in\Z}$ of the nonhomogeneous Sobolev space  $W^{2s}_p$ such that
 $$ z=\sum_{j\in\Z} z_j\andf \sum_{j\in\Z} \bigl(2^{-j/2}\|z_j\|_{W^{2s}_{p}} +2^{j/2}\|z_j\|_{L^{p}}\bigr)\leq 
 2\|z\|_{\dot B^{-1+2/p}_{p,1}}.$$
Now, the fact that $\cP:L^p\to L^p,$ and the  embeddings $\dot W^{2s}_p\hookrightarrow\dot H^{s}$ and 
$L^{p}\hookrightarrow \dot H^{-s}$  allow to write that
$$ \|z_j\|_{\dot H^{s}}\leq C\|z_j\|_{\dot W^{2s}_p}\andf
 \|\cP(az_j)\|_{\dot H^{-s}}\leq C\|\cP(a z_j)\|_{L^{p}}\leq C\|a\|_{L^\infty} \|z_j\|_{L^{p}},$$
 which gives our claim.
 \item  For general measurable functions $a$ bounded and bounded away from zero, 
 the space $\wt B^{0,s}_{a,1}$ might  depend on $s.$  However, in the case $s\in(0,1/2),$ 
if   $a$ is positive and piecewise constant along a finite number of Lipschitz  curves, then  it coincides with $\dot B^0_{2,1}.$
Indeed, in this case  the space $\dot H^{-s}$ is stable by multiplication by piecewise constant functions.
  \end{itemize}
 Our main global 
  existence and uniqueness statement reads as follows:
    \begin{theorem} \label{thm:2}  Let  $\rho_0$  satisfy \eqref{eq:rho00}  and $u_0$ be  in $\wt B^{0,s}_{\rho_0,1}$
    for some $s\in(0,1).$ 
    Then, (INS) supplemented with \eqref{eq:bc} admits a unique global solution $(\rho,u,\nabla P)$
 satisfying all the properties stated in Theorem \ref{thm:1} (and the remarks that follow) and the energy balance 
  \eqref{eq:L2INS}.  In addition, we have 
 $$u\in\cC(\R_+;L^2),\quad
 \nabla u\in L^1(\R_+;C_b\cap \dot H^1),\quad \sqrt t(\dot u,\nabla P,\nabla^2u)\in L^{4/3}(\R_+;L^4)$$ 
   and, for all $t\in\R_+,$
 we have $u(t)\in \wt B^{0,s}_{\rho(t),1}$ with the inequality 
 \begin{equation}\label{eq:utB} \|u(t)\|_{\wt B^{0,s}_{\rho(t),1}}\leq C \|u_0\|_{\wt B^{0,s}_{\rho_0,1}}.\end{equation}
 \end{theorem}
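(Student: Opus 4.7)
My plan is to combine the finite-energy solution provided by Theorem \ref{thm:1} with the dynamic interpolation scheme sketched in Section \ref{s:results}. Since $\wt B^{0,s}_{\rho_0,1}$ embeds in $L^2_\sigma$ by \eqref{eq:L2zj}, the initial data falls into the scope of Theorem \ref{thm:1} and produces a global weak solution $(\rho,u,\nabla P)$ satisfying all the decay estimates listed there as well as the energy balance \eqref{eq:L2INS}. It then remains to upgrade this solution to one satisfying the Lipschitz property \eqref{eq:Lip} together with the claimed regularity and propagation properties, and to derive uniqueness from \eqref{eq:Lip}.

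To obtain \eqref{eq:Lip}, I would fix, for each $\eps>0$, an atomic decomposition
$$u_0=\sum_{j\in\Z}u_{0,j}\quad\text{with}\quad\sum_{j\in\Z}\bigl(2^{-j/2}\|u_{0,j}\|_{\dot H^s}+2^{j/2}\|\cP(\rho_0\,u_{0,j})\|_{\dot H^{-s}}\bigr)\leq(1+\eps)\|u_0\|_{\wt B^{0,s}_{\rho_0,1}},$$
and let $u_j$ denote the solution of the linear system \eqref{eq:LINS0} with $v:=u$ and $\rho$ frozen, for the datum $u_{0,j}$. Linearity yields $u=\sum_j u_j$. Granted the three families of weighted estimates for \eqref{eq:LINS0} announced in Section \ref{s:results}, namely a forward $\dot H^s$ one, the standard $L^2$ energy bound from Theorem \ref{thm:1}, and a backward dual $\dot H^{-s}$ estimate on $\cP(\rho u)$, a Gagliardo--Nirenberg interpolation yields \eqref{eq:INSineq} for each $u_j$. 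Splitting $\int_0^\infty\|\nabla u_j\|_{L^\infty}\,dt$ at the optimal time $A_j:=2^{-j/s}$ and summing over $j$ gives $\|\nabla u\|_{L^1(\R_+;L^\infty)}\leq C_{\rho,v}\|u_0\|_{\wt B^{0,s}_{\rho_0,1}}$, which is \eqref{eq:Lip}. The bound $\nabla u\in L^1(\R_+;\dot H^1)$ and the weighted $L^{4/3}(\R_+;L^4)$ control on $(\dot u,\nabla P,\nabla^2 u)$ should then follow from the same three decay families combined with Stokes maximal regularity applied to the momentum equation rewritten as a Stokes problem with forcing $-\rho\dot u$.

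With \eqref{eq:Lip} at hand, the flow $X$ of $u$ is bi-Lipschitz, so that (INS) becomes equivalent to its Lagrangian formulation in the spirit of \cite{DM1}: the density is then time-independent and the velocity equation is a Stokes-type system whose coefficients are $L^\infty$ perturbations of the identity controlled by $\int_0^t\|\nabla u\|_{L^\infty}\,d\tau$. For two solutions issued from the same data, a plain $L^2$ energy estimate on the difference $\du$ of the Lagrangian velocities closes via Gronwall --- now with the Lipschitz norm playing the role that $\|\nabla\wt u\|_{L^2}^2$ did in the homogeneous argument recalled in the introduction --- and yields $\du\equiv 0$. Running the same estimate between solutions with possibly distinct data provides the stability of the flow map in the energy-like topology announced in the abstract. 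As for \eqref{eq:utB}, it follows from the decomposition $u(t)=\sum_j u_j(t)$: the forward $\dot H^s$ bound on $u_j(t)$ and the backward $\dot H^{-s}$ bound on $\cP(\rho(t)u_j(t))$ together exhibit this decomposition as an admissible atomic representation of $u(t)$ relative to $\rho(t)$, with constant uniform in $t$.

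The main obstacle will be the derivation of the three weighted decay families for the linearized system \eqref{eq:LINS0}, and in particular the dual $\dot H^{-s}$ estimate for $\cP(\rho u)(t)$: since $\rho$ is merely bounded, Fourier analysis is unavailable and this bound must be obtained by duality against the solution of the backward adjoint of \eqref{eq:LINS0}, whose energy control must in turn depend only on $\rho_*,\rho^*$ and on energy-type norms of $v$, which forces every argument to be purely energetic. A secondary delicate point is to check that the series $u=\sum_j u_j$ converges in a topology strong enough to legitimize summing the $L^1_tL^\infty$ bounds on $\nabla u_j$; this should be granted by the geometric structure of the atomic estimates in $j$.
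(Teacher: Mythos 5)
Your existence, propagation and Lipschitz part is essentially the paper's own scheme: embed $\wt B^{0,s}_{\rho_0,1}\hookrightarrow L^2_\sigma$, take the weak solution of Theorem \ref{thm:1}, decompose $u_0$ into atoms, solve \eqref{eq:LINS0} with transport field $v=u$ and the (transported, not initial) density $\rho$ for each atom, and run the dynamic interpolation with the forward $\dot H^{s}$ estimate, the backward/dual estimate on $\cP(\rho u_j)$ in $\dot H^{-s}$, and Gagliardo--Nirenberg plus Stokes regularity; summing over $j$ gives \eqref{eq:Lip}, \eqref{eq:utB} and the $L^{4/3}(\R_+;L^4)$ bounds. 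The only point you gloss is the identification $u=\sum_j u_j$: this is not "by linearity" alone, but follows from normal convergence of the series in $L^2$ and uniqueness of finite-energy solutions of the linear system \eqref{eq:LINS0}, exactly as in the paper; this is easy but should be said.

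The genuine gap is in your uniqueness step. In Lagrangian coordinates the difference $\dv=v^2-v^1$ is \emph{not} divergence free for the twisted operator: one has $\div_{\!v^1}\dv=-\div(\dA\,v^2)\neq0$ (second line of \eqref{eq:dv}), so when you test the difference equation with $\dv$ the pressure term $\int\nabla_{v^1}\dQ\cdot\dv$ does not vanish, and at this level of regularity there is no bound on $\dQ$ to absorb it. A "plain $L^2$ energy estimate closing via Gronwall with the Lipschitz norm" therefore does not close; note also that the Lagrangian equation has no convection term, so $\|\nabla u\|_{L^\infty}$ is not the natural Gronwall factor --- it only enters through the smallness of $A^i-{\rm Id}$ and the control of $\dA$ by $\nabla\dv$. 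The paper's remedy is the extra step you are missing: first solve $\div_{\!v^1}w=-\div(\dA\,v^2)$ by a fixed point built on a Bogovskii-type operator (Lemmas \ref{lem:1} and \ref{lem:2}, with the smallness condition \eqref{eq:smallDv1} on $T$), then perform the energy estimate on $z:=\dv-w$, for which the pressure does disappear, and finally absorb the remainder using smallness of $T$ (condition \eqref{eq:T}) and a bootstrap in time. This correction step is precisely what consumes the additional norms $\sqrt t(\dot u,\nabla^2u,\nabla P)\in L^{4/3}(\R_+;L^4)$, $\dot u\in L^1(\R_+;L^2)$ and $\sqrt t\nabla u\in L^2(\R_+;L^\infty)$ obtained by dynamic interpolation --- they are not decorative conclusions but the very input of the uniqueness (and stability) argument, so your plan must include them at the right place.
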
 
 \begin{remark} As a by-product of the proof of the uniqueness, we get a stability result  with respect to the  initial data 
  in the energy space  (see  Theorem \ref{thm:3} below). 
 \end{remark} 
 \begin{remark} 
Owing to  $\nabla u\in L^1(\R_+;C_b(\Omega)),$   the flow of $u$
has $C^1$ regularity with respect to the space variable, which 
entails the conservation of  the geometrical structures of the fluid during the evolution. For example,  if $\rho_0$ takes  two different positive values across a $C^1$ interface, 
then it remains so  forever: the interface is just transported by the flow and keeps its $C^1$ regularity. 
Likewise, the (local) $H^2$ regularity of the interfaces is preserved since $\nabla^2 u\in L^1(\R_+;L^2(\Omega)).$ 
\end{remark} 
\begin{remark}  As said before,  for $\Omega=\R^3$ a result in the same spirit has been obtained 
by P. Zhang in \cite{Zhang19} in the small velocity case (see also \cite{DW}).  An important difference with our situation 
is that in dimension three, the critical   space  for the velocity is 
$\dot B^{1/2}_{2,1}:=[L^2,\dot H^1]_{1/2,1}.$ Hence, it is enough to prove time weighted energy estimates in $L^2$
and $\dot H^1,$ and the relevant critical space for $u_0$ does not depend on $\rho_0.$ 
\end{remark}

To simplify the presentation, 
  we assume in the rest of the paper that $s=1/2.$ We use 
   the short notation $\wt B^0_{\rho_0,1}$ for $\wt B^{0,1/2}_{\rho_0,1}.$
   
   Let us briefly present  the main steps of the proof of Theorem \ref{thm:2}.  
The global existence of a solution being ensured by prior results, the main point is
to exhibit enough regularity of the solution  to ensure uniqueness. 
As already explained at length in the introduction, the key is to establish \eqref{eq:Lip}, and this 
will be actually performed on the linear system \eqref{eq:LINS0}.
\smallbreak
The first step is to prove energy type weighted estimates for \eqref{eq:LINS0}  that require only $u_0$ to be in $L^2$
and the density to be bounded and bounded away from zero. 
The three principles guiding our search for estimates are: 
\begin{itemize}
\item   taking \emph{convective derivatives}   $D_t:=\partial_t+v\cdot\nabla$ (since $D_t\rho=0$)
rather than  space derivatives   since $\rho$ has no regularity;
\item  using  differential  operators $\sqrt t\nabla,$ $t\partial_t$  and $tD_t$ 
 (that are  of order~$0$  in the parabolic scaling);
 \item transferring  time regularity to space regularity by means of the    maximal regularity 
  properties of the Stokes system (see the Appendix), observing~that
\begin{equation}\label{eq:stokes}
\mu\Delta u-\nabla P=\rho\dot u\andf \div u=0\quad\hbox{in }\ \Omega, \with \dot u:=\d_tu+v\cdot\nabla u.
\end{equation} 
 \end{itemize}
 In the end, this allows to control quantities like    $\|\sqrt t\nabla u(t)\|_{L^2},$   $\|t \partial_t u(t)\|_{L^2},$ 
 $\|t \dot  u(t)\|_{L^2}$  or $\|t\nabla^2 u(t)\|_{L^2}$ 
 in terms of $\|u_0\|_{L^2},$ $\rho_*,$ $\rho^*$ and energy-like norms of $v$. 
\smallbreak 
The second step  is to propagate the $\dot H^1$ and the $\dot H^{-1}$ norms. 
On the one hand,  $\dot H^1$ estimates for (INS)  are known  since  the work by O. Ladyzhenskaya and V. A Solonnikov
in \cite{LS} (we shall also derive  time weighted versions of these estimates). 
On the other hand,  propagating \emph{negative} Sobolev  regularity seems to be new. 
This  will be achieved  by duality after observing that the backward system associated with \eqref{eq:LINS0} satisfies the same
family of estimates in $\dot H^s.$  However, owing the to density dependent structure of the latter system, 
we will have only access to  $\|\cP(\rho u)(t)\|_{\dot H^{-s}},$ whence  the `weighted' definition 
of the interpolation space $\wt B^{0,s}_{\rho,1}.$ 
\smallbreak
The third step is devoted to propagating the regularity  $\wt B^0_{\rho,1}$ 
and to bounding  $\nabla u$ in $L^1(\R_+;L^\infty)$ in terms of the data only. 
In passing, we  exhibit some controls of other critical norms (like e.g. that of $\dot u$ in $L^1(\R_+;L^2)$)
that will be needed in  the proof of uniqueness and  stability. 
All these bounds rely on the dynamic interpolation method that has been described above for the heat equation. 
 In the end,  we get:
 $$ \int_{0}^\infty\|\nabla u\|_{L^\infty}\,dt+\int_0^\infty\|\dot u\|_{L^2}\,dt
 +\biggl(\int_0^\infty t^{2/3}\|\dot u\|_{L^4}^{4/3}dt\biggr)^{3/4} \leq C\|u_0\|_{\wt B^0_{\rho_0,1}}.$$
The fourth step is the proof of existence of a global solution corresponding to the assumptions of Theorems 
\ref{thm:1} or \ref{thm:2}. For Theorem \ref{thm:1}, the overall strategy is standard: we smooth out the data, 
resort to  classical results that ensure the existence of a sequence of global smooth solutions for (INS), 
and use the aforementioned estimates and compactness  to pass to the limit. 
For Theorem \ref{thm:2}, it is a bit the same, except that one has to be careful when smoothing out the velocity,
owing to the `exotic' definition of the space $\wt B^0_{\rho_0,1}.$
The easiest way is to truncate  a decomposition of $u_0$
so as to have an approximate initial velocity in the smoother space $H^{1/2}.$ 
\smallbreak
The last step is devoted to uniqueness and stability for (INS). As in \cite{DM1}, we reformulate (INS) in Lagrangian coordinates. 
The properties of the solutions provided by Theorem \ref{thm:2}, in particular \eqref{eq:Lip},
 ensure that  the two formulations are equivalent. The gain is that we do not have to worry about the density as it is 
 time-independent. As for the difference of the two velocities in Lagrangian coordinates, it  satisfies a parabolic type equation
 and  may be estimated in $L^\infty(\R_+;L^2) \cap L^2(\R_+; \dot H^1).$ 
 The computations are in the spirit of those of  \cite{DMP}. However, in our case the velocity is less regular by one derivative, which requires some care. 
 \smallbreak
 As a concluding remark, we want to point out that, in contrast with numerous recent works dedicated 
 to the inhomogeneous incompressible Navier-Stokes equations,  our approach  does not 
 use Fourier analysis at all. It just relies on very basic  energy arguments, interpolation, embedding and on the classical regularity theory for the Stokes system (this is the only place where some assumptions have to be made on the fluid domain). 
 For simplicity here we  considered  $\R^2,$ $\T^2$ or $C^2$ bounded domains, but   more 
 general domains could be treated in the same way.  
  \medbreak
 In the rest of the paper, we  shall focus on the case $\mu=1$ for simplicity.  
 The general case follows thanks to the  rescaling:
 $$\rho(t,x):= \wt\rho(\mu t,x),\quad  u(t,x):=\mu\wt u(\mu t,x),\quad P(t,x):= \mu^2\wt P(\mu t,x).$$

 
\section{Weak solutions with time decay}   	\label{s:weak}

This section is devoted to proving Theorem \ref{thm:1}:
we here construct finite energy weak solutions satisfying algebraic 
time decay estimates of different orders, 
without requiring more regularity on $u_0$ than $L^2.$
The  exponential decay  that can be expected in the bounded domain case (see  \cite{DMP}),  is not addressed  to simplify the presentation, 
as it is  not needed for achieving the main result of the paper. 

 \subsection{Time decay estimates for the linearized momentum equation}
 
 We here aim at proving time weighted energy estimates for the linear system \eqref{eq:LINS0}
 in the case where the 
(smooth enough) given  pair $(\rho,v)$ satisfies \eqref{eq:condrhov} and 
\begin{equation}\label{eq:rho0}\rho_*=\underset{(t,x)\in\R_+\times\Omega}{\inf}\rho(t,x)>0\andf 
\rho^*=\underset{(t,x)\in\R_+\times\Omega}{\sup}\rho(t,x)<\infty.\end{equation}
System \eqref{eq:LINS0} is supplemented with 
a divergence free initial velocity field $u_0,$ vanishing at the boundary in the bounded domain case 
and, in the torus case, such that $$\int_{\T^2}(\rho_0 u_0)(x)\,dx=0. $$
This latter assumption is  not restrictive owing to the Galilean invariance of the system, and will enable us to use freely the Gagliardo-Nirenberg inequality \eqref{eq:GNT}. 
\medbreak
We aim at  proving   energy  estimates for the solution
with time weights $t^{k/2}$ for $k\in\{0,1,2,3\}$. We strive for bounds  depending only on
$\rho_*,$ $\rho^*,$   $\|u_0\|_{L^2}$ and on  \emph{energy-type norms of $v$} in the meaning given 
at the end of the introduction of the paper. This latter point  is 
 fundamental for getting not only  Theorem \ref{thm:1} but also Theorem \ref{thm:2}.  
 \medbreak
 Before proceeding, let us  warn the reader that we unfortunately did not find a way 
 to avoid the tedious calculations that will follow, since it is has to be checked with the greatest care that only 
 `energy type norms' come into play. 

\subsubsection{The basic energy balance}

Taking the $L^2$ scalar product of \eqref{eq:LINS0} with $u$  yields
\begin{equation}\label{eq:L2dt}
\frac12\frac d{dt}\|\sqrt\rho\, u\|_{L^2}^2+ \|\nabla u\|_{L^2}^2 = 0.
\end{equation}
From this, 
 we get for all $t\in\R_+,$ 
\begin{equation}\label{eq:L2}
\|(\sqrt \rho\,u)(t)\|_{L^2}^2 +
2\int_0^t\|\nabla u\|_{L^2}^2\,d\tau=
\|\sqrt \rho_0\,u_0\|_{L^2}^2.
\end{equation}
As $\rho_*>0,$  combining  \eqref{eq:L2} 
with the Gagliardo-Nirenberg inequality \eqref{eq:GN} recalled in Appendix 
yields for all $2\leq p<\infty$:
\begin{equation}\label{eq:L2ter}
\|u\|_{L^q(L^p)}\leq C_p\,\rho_*^{-1/2} \|\sqrt{\rho_0}\, u_0\|_{L^2}\with 1/p+1/q=1/2. 
\end{equation}

\subsubsection{Estimates with weight $\sqrt t$}  
Let us  rewrite   \eqref{eq:LINS0} as follows: 
 \begin{equation}\label{edu1}
\Delta u - \nabla P=  \rho \t{u}\andf \div u=0\quad\hbox{in }\ \Omega,\ \with \dot u:=u_t+v\cdot\nabla u.
\end{equation}
Taking the $L^2(\Omega;\R^2)$ scalar product of \eqref{edu1} with $t\t{u}$ yields  for all $t\geq0$:
$$\int_{\Omega}\rho t|\t{u}|^2\,dx=t\int_{\Omega}\Delta u\cdot u_t\,dx
-t\int_{\Omega}\nabla P\cdot u_t\,dx + 
t\int_{\Omega}\bigl(\Delta u-\nabla P)\cdot(v\cdot\nabla u)\,dx.$$
As $\div u=0,$ integrating by parts and using again \eqref{edu1} yields
\begin{equation}\label{eq:weight0}
\frac{1}{2}\frac{d}{dt}\int_{\Omega}t\abs{\nabla u}^{2}dx
-\frac12\int_{\Omega}\abs{\nabla u}^2\,dx
+\int_{\Omega} \rho t\abs{\t{u}}^{2}\,dx=\int_{\Omega}\rho t\t{u}\cdot(v\cdot \nabla u)\,dx.\end{equation}
Remembering   \eqref{eq:L2dt} and performing a time integration, we get for all $t\geq 0,$
\begin{multline}\label{eq:weight1}
\frac14\int_{\Omega} \rho(t)|u(t)|^2\,dx+
\frac{t}{2}\int_{\Omega}\!\abs{\nabla u(t)}^{2}dx+\int_{0}^{t}\!\!\int_{\Omega} \tau \rho \abs{\t{u}}^{2}dx\,d\tau\\
=\frac14\int_{\Omega} \rho_0|u_0|^2\,dx+ \int_{0}^{t}\!\!\int_{\Omega}\!\! \tau \rho \t{u}\cdot(v\cdot \nabla u)\,dx\, d\tau.\end{multline}
Of course, since  $u_t=\dot u-v\cdot\nabla u,$ one can write
$$\frac14\|\sqrt\rho u_t\|_{L^2}^2\leq \frac12 \|\sqrt\rho\, \dot u\|_{L^2}^2+\frac12\|\sqrt\rho\,v\cdot\nabla u\|_{L^2}^2.$$
Hence adding up this inequality multiplied by $t,$ to \eqref{eq:weight1} and using Young inequality to bound the last
term of \eqref{eq:weight1}, we discover that 
\begin{multline}\label{eq:weight1b} \|\sqrt{\rho(t)} u(t)\|_{L^2}^2+ 2\|\sqrt t \nabla u(t)\|_{L^2}^2
+\int_0^t\bigl(\|\sqrt{\rho\tau}\,\t{u}\|_{L^2}^2+\|\sqrt{\rho\tau}\,u_\tau\|_{L^2}^2\bigr)d\tau\\
\leq \|\sqrt{\rho_0}u_0\|_{L^2}^2+6\int_0^t\|\sqrt{\rho\tau} v\cdot\nabla u\|_{L^2}^2\,d\tau.\end{multline}
Combining H\"older, Ladyzhenskaya inequality \eqref{eq:lad} and Young inequality yields
\begin{equation}\label{eq:weight1a}
\|\sqrt\rho\, v\cdot\nabla u\|_{L^2}^2\leq  \frac\eps{\rho^*}\|\nabla^2u\|_{L^2}^2
+\frac{\rho^*}{\eps}\|\sqrt\rho v\|_{L^4}^4\|\nabla u\|_{L^2}^2,\qquad\eps>0,\end{equation}
and taking advantage of  the  regularity  theory of the Stokes system  (recalled in Appendix) gives
\begin{equation}\label{ed2}\norm{\nabla^{2}u}_{L^{2}}^2+\norm{\nabla P}_{L^{2}}^2\leq C_\Omega\rho^*
\norm{\sqrt{\rho}\t{u}}_{L^{2}}^2.\end{equation}
Hence, choosing $\eps>0$ suitably small  in \eqref{eq:weight1a}, using  \eqref{ed2}, then reverting 
to \eqref{eq:weight1b} and applying Gronwall lemma allows to conclude that there exist  
positive constants $c_\Omega$ and $C_\Omega$ depending only on $\Omega,$  such that 
\begin{multline}\label{eq:weight3}
X_{1}(t)\leq \|\sqrt \rho_0u_0\|_{L^2}^2  e^{C_1^v(t)}\with 
  C_1^v(t):=C_\Omega\rho^*\!\!\int_0^t\!\|\sqrt\rho\,v\|_{L^4}^4\,d\tau\andf\\
X_{1}(t):= \|(\sqrt{\rho}\,u)(t)\|_{L^2}^2+ 2\|\sqrt t \nabla u(t)\|_{L^2}^2\hspace{6cm}
\\+\frac12\int_0^t \Bigl(\|\sqrt{\rho\tau}\,\t{u}\|_{L^2}^2+\|\sqrt{\rho\tau}\,{u_\tau}\|_{L^2}^2+
\frac{c_\Omega}{\rho^*}\|\sqrt\tau(\nabla^2u,\nabla P)\|_{L^2}^2\!\Bigr)d\tau.
\end{multline}

\subsubsection{Estimates with weight $t$}  Applying $\d_{t}$ to \eqref{eq:LINS0}  gives 
\begin{equation}\label{eq:utt}
    \rho u_{tt}+\rho v\cdot \nabla u_{t}-\Delta u_{t}+\nabla P_{t}=-\rho_{t}\t{u}-\rho v_{t}\cdot \nabla u.
\end{equation}
As $\div u_t=0,$ testing  \eqref{eq:utt} by  $t^{2}u_t$ then observing that 
$$\rho_t=-\div(\rho v)\andf |u_t|^2= |\dot u|^2 -2 \dot u\cdot(v\cdot\nabla u) +|v\cdot\nabla u|^2$$
gives after performing a few integration by parts: 
$$\displaylines{
\quad
\frac{1}{2}\frac{d}{dt}\int_{\Omega}\rho t^{2}\abs{u_{t}}^{2}\,dx+\int_{\Omega}t^{2}\abs{\nabla u_{t}}^{2}\,dx
= \int_{\Omega} t\rho\abs{\dot u}^{2}\,dx-2\int_{\Omega}\rho t \dot u\cdot(v\cdot\nabla u)\,dx\hfill\cr\hfill
+\int_{\Omega} t\rho \abs{v\cdot\nabla u}^2\,dx
+\int_{\Omega}t^{2} \div(\rho v)\t{u}\cdot u_{t}\,dx-\int_{\Omega}t^{2}\rho (v_{t}\cdot \nabla u)\cdot u_{t}\,dx.\quad}$$
Adding up  twice  \eqref{eq:L2dt}  and  \eqref{eq:weight0} to this latter inequality, we obtain:
\begin{multline}\label{eq:II2}
\frac{d}{dt}\int_{\Omega}\Bigl(\rho|u|^2+ t\abs{\nabla u}^{2}+\frac{\rho t^{2}}2\abs{u_{t}}^{2}\Bigr)dx
+\int_{\Omega} \bigl(\abs{\nabla u}^2+  \rho t\abs{\t{u}}^{2}+t^2\abs{\nabla u_t}^2\bigr)dx\\
=\int_{\Omega}\rho t\abs{v\cdot \nabla u}^2\,dx
+\int_{\Omega}t^{2} \div(\rho v)\,\t{u}\cdot u_{t}\,dx
-\int_{\Omega}t^{2}\rho (v_{t}\cdot \nabla u)\cdot u_{t}\,dx=:I_{1}+I_{2}+I_{3}.\end{multline}
Thanks to \eqref{eq:weight1a}, \eqref{ed2} and  Young inequality, we have
 \begin{equation}\label{eq:I1}
I_1\leq\frac12  \|\sqrt{\rho{t}}\dot u\|_{L^2}^2 + {C\rho^*}\|\sqrt\rho v\|_{L^4}^4\|\sqrt{t}\nabla u\|_{L^2}^2.
\end{equation}
For term $I_{2},$ an integration by parts yields 
$$
    I_{2}
    =-\int_{\Omega}{t}^{2} (\rho v\cdot \nabla{\t{u}}) \cdot u_{{t}}\,dx
    -\int_{\Omega}{t}^{2} (\rho v\cdot \nabla{u_{{t}})\cdot  \t{u} }\,dx=:I_{21}+I_{22}.$$
    By \eqref{eq:lad}, H\"older and Young inequalities, and \eqref{eq:rho0}, we have for some constant 
    $C$ depending only on $\rho_*,$ $\rho^*$ and $\Omega,$
     \begin{align}\label{eq:I21}
    I_{21}    &\leq C  \|t\nabla\dot u\|_{L^2}\|\sqrt \rho v\|_{L^4}\|tu_t\|^{1/2}_{L^2}\|t\nabla u_t\|^{1/2}_{L^2}\nonumber\\
    &\leq \frac1{10}\bigl(\|t\nabla u_t\|_{L^2}^2+\|t\nabla\dot u\|_{L^2}^2\bigr)
    +C\|\sqrt \rho v\|_{L^4}^4\|\sqrt\rho\,  tu_t\|_{L^2}^2.   \end{align}
 The same arguments lead to 
 \begin{equation}\label{eq:I22}
 I_{22}\leq  \frac1{10}\bigl(\|t\nabla u_t\|_{L^2}^2+\|t\nabla\dot u\|_{L^2}^2\bigr)
    +C\|\sqrt \rho v\|_{L^4}^4\|\sqrt\rho\, t \dot u\|_{L^2}^2.  \end{equation}
For $I_{3}$, one has, still owing to H\"older and Young inequalities,  and \eqref{eq:GN} or \eqref{eq:GNT}, 
\begin{align}\label{eq:I3} I_{3}&\leq \norm{\sqrt{\rho{t}}\, v_{{t}}}_{L^{2}} \norm{{t}\sqrt{\rho} \, u_{{t}}}_{L^{4}}\norm{\sqrt{t}\nabla u}_{L^{4}}\nonumber\\ 
    &\leq  \frac1{10}\|t\nabla u_t\|_{L^2}\|\nabla u\|_{L^2}
+C\norm{\sqrt{\rho{t}}\, v_{{t}}}_{L^{2}}^2  \norm{{t} \sqrt\rho u_{{t}}}_{L^{2}}\|t\nabla^2u\|_{L^2}.        \end{align}
Hence, inserting \eqref{eq:I1}, \eqref{eq:I21}, \eqref{eq:I22} and \eqref{eq:I3} in \eqref{eq:II2} 
gives
\begin{multline}\label{eq:II2a}
\frac{d}{dt}\Bigl(\|\sqrt \rho\,u\|_{L^2}^2+ \|\sqrt t \nabla u\|_{L^2}^{2}+\frac12\|\sqrt\rho t u_t\|_{L^2}^2\Bigr)\\
+\frac12\Bigl(\|\nabla u\|_{L^2}^2+  \|\sqrt{\rho t}\t{u}\|_{L^2}^{2}+\|t\nabla u_t\|_{L^2}^2\Bigr)-\frac14\|t\nabla \dot u\|_{L^2}^2
\\\lesssim\|\sqrt \rho v\|_{L^4}^4\bigl(\|\sqrt\rho t (\dot u,u_t)\|_{L^2}^2+
\|\sqrt t \nabla u\|_{L^2}^2\bigr)+\norm{\sqrt{\rho{t}}\, v_{{t}}}_{L^{2}}^2  \norm{{t} \sqrt \rho u_{{t}}}_{L^{2}}\|t\nabla^2u\|_{L^2}.
\end{multline}
To close the estimate, we  have to bound  $\|\sqrt{\rho} t\dot u\|_{L^2},$ $\|t\nabla^2u\|_{L^2}$
and $\|t\nabla\dot u\|_{L^2}.$  For the first two terms, one may use \eqref{eq:lad}, 
 \eqref{ed2} and  the definition of $\dot u$ to get
$$ \begin{aligned}\|t(\nabla ^2 u,\nabla P)\|_{L^2}
  &\leq C_\Omega\bigl(\sqrt{\rho^*}\|t\sqrt\rho u_t\|_{L^2} +\|\rho\, t^{1/4} v\|_{L^4} \|\sqrt t\nabla u\|_{L^2}^{1/2}\|t\nabla^2u\|_{L^2}^{1/2}\bigr)\\
  &\leq \frac12\|t\nabla^2u\|_{L^2}+
  C_\Omega\bigl(\sqrt{\rho^*}\|t\sqrt\rho u_t\|_{L^2} +\|\rho\, t^{1/4} v\|_{L^4}^2 \|\sqrt t\nabla u\|_{L^2}\bigr) \cdotp\end{aligned}$$
 This, in the end, implies that
  \begin{equation}\label{eq:stokest1}
  \frac14\|\sqrt \rho t\dot u\|_{L^2}+\frac{c_\Omega}{\sqrt{\rho^*}} \|t\nabla^2u,t\nabla P\|_{L^2}
  \leq C\bigl(\|t\sqrt\rho u_t\|_{L^2} +\| t^{1/4} v\|_{L^4}^2 \|\sqrt t\nabla u\|_{L^2}\bigr) \cdotp\end{equation}
        Finally, from the definition of $\dot u,$ H\"older inequality and \eqref{eq:lad}, we may write:    
        $$\begin{aligned}
        \|t\nabla\dot u\|_{L^2}&\leq \|t\nabla u_t\|_{L^2}
        +\|t\nabla v\cdot\nabla u\|_{L^2}+\|tv\cdot\nabla^2 u\|_{L^2}\\
        &\leq   \|t\nabla u_t\|_{L^2}+\|\sqrt t\nabla v\|_{L^4}\|\nabla u\|_{L^2}^{1/2}\|t\nabla^2u\|_{L^2}^{1/2}+
        C \|v\|_{L^4} \|t\dot u\|_{L^2}^{1/2}\|t\nabla\dot u\|_{L^2}^{1/2},
\end{aligned}        $$
which implies that         \begin{equation}\label{eq:tdudot}
             \|t\nabla\dot u\|_{L^2}\leq 2 \|t\nabla u_t\|_{L^2}+
             \frac{\|\nabla u\|_{L^2}}4
             +C\bigl(\|\sqrt t\nabla v\|_{L^4}^2\|t\nabla^2 u\|_{L^2}\!+\!\|v\|_{L^4}^2\|\sqrt\rho t\dot u\|_{L^2}\bigr)\cdotp
             \end{equation}
     Let us set
             $$\displaylines{
  X_2(t):=\|(\sqrt\rho u)(t)\|_{L^2}^2+\|\sqrt t\nabla u(t)\|_{L^2}^2+\frac14\|\sqrt\rho tu_t\|_{L^2}^2
  +\frac1{16}\|\sqrt\rho t\dot u\|_{L^2}^2\hfill\cr\hfill+\frac{c_\Omega}{\rho^*}\|t(\nabla^2u,\nabla P)\|_{L^2}^2
  +\frac1{16}\int_0^t\bigl(\|\nabla u\|_{L^2}^2+\|\sqrt{\rho\tau}\dot u\|_{L^2}^2+
    \|\tau\nabla u_\tau\|_{L^2}^2 + \|\tau\nabla\dot u\|_{L^2}^2\bigr)d\tau.             }$$
             Integrating \eqref{eq:II2a} on $[0,t],$ then taking advantage of 
             \eqref{eq:stokest1} and \eqref{eq:tdudot}, then, finally, 
             using Gronwall lemma, we conclude that there exists a constant $C$ depending only on 
             $\Omega,$ $\rho_*$ and $\rho^*$ such that
      \begin{multline}\label{eq:weighttt}
      X_2(t)\leq \|u_0\|_{L^2}^2e^{C_2^v(t)}\with\\
 C_2^v(t):= C\biggl(\underset{\tau\in[0,t]}{\sup}\|\tau^{1/4} v(\tau)\|_{L^4}^4+\int_0^t \bigl(\|\sqrt\rho\,v\|_{L^4}^4+\|\sqrt\tau\nabla v\|_{L^4}^4+\|\sqrt{\rho\tau}v_\tau\|_{L^2}^2\bigr)d\tau\biggr)\cdotp\end{multline}

\subsubsection{Estimates with weight $t^{3/2}$}
Let  $D_t:=\d_t+v\cdot\nabla$ and $\ddot u:=D_t\dot u.$   We have\footnote{Here 
we use the notation $(\nabla^2 u \cdot \nabla v)^{i}:=\underset{1\leq j,k\leq d}{\Sum} \d_{k}v^{j} \,\d_{j}\d_{k} u^{i}.$}: 
\begin{equation}\label{eq:ddotu}
\rho \ddot u-\Delta\dot u+\nabla\dot P= F
:=\nabla v\cdot\nabla P-\Delta v\cdot\nabla u -2\nabla^2 u\cdot\nabla v.
\end{equation}
 Taking the  $L^2(\Omega;\R^2)$ scalar product with $t^3\ddot u,$  we readily get 
\begin{equation}\label{eq:weightIV-1}
\frac12\frac d{dt}\|t^{3/2}\nabla\dot u(t)\|_{L^2}^2+\|t^{3/2}\sqrt\rho\,\ddot u\|_{L^2}^2
=\frac32\|t\nabla \dot u\|_{L^2}^2+\sum_{i=1}^5 J_i\end{equation}
with 
$$\begin{aligned}
J_1&:= \int_{\Omega}\Delta \dot u\cdot({t}^3 v\cdot\nabla \dot u)\,dx,\\
J_2&:=- \int_{\Omega}\nabla\dot P\cdot\bigl({t}^3 v\cdot(\nabla v\cdot \nabla  u)\bigr)dx,\\
J_3&:=\int_{\Omega}  \nabla\dot P\cdot({t}^3v_{t}\cdot\nabla u) \,dx,\\
J_4&:= \int_{\Omega}\nabla\dot P\cdot\bigl({t}^3v\cdot(v\cdot\nabla^2 u)\bigr)dx,\\
J_5&:=\int_{\Omega} F\cdot{t}^3\ddot u \,dx.
\end{aligned}$$
For any $\eps>0,$ 
the terms $J_1$ to $J_5$ may be bounded as follows by combining H\"older inequality, Young inequality,
\eqref{eq:GN} with $p=4$ or $p=6$ (and \eqref{eq:stokesLp} for $J_4$): 
$$
J_1\leq \|{t}^{3/2}\nabla^2\dot u\|_{L^2}\|v\|_{L^4}\|{t}^{3/2}\nabla\dot u\|_{L^4}
\leq \eps\|{t}^{3/2}\nabla^2\dot u\|_{L^2}^2+C_\eps\|v\|_{L^4}^4\|{t}^{3/2}\nabla\dot u\|_{L^2}^2,$$
$$\begin{aligned} 
J_2&\leq \|{t}^{3/2}\nabla \dot P\|_{L^2}\|{t}^{1/6}v\|_{L^6}\|\sqrt {t}\nabla v\|_{L^6}
\|{t}^{5/6}\nabla  u\|_{L^6}\\
&\leq C \|{t}^{3/2}\nabla \dot P\|_{L^2}\|{t}^{1/6}v\|_{L^6}\|\sqrt {t}\nabla v\|_{L^6}
\|\sqrt{t}\nabla  u\|_{L^2}^{1/3} \|{t} \nabla^2 u\|_{L^6}^{2/3}\\
&\leq \eps\|{t}^{3/2}\nabla\dot P\|_{L^2}^2
+C_\eps\|{t}^{1/6}v\|_{L^6}^2\|\sqrt {t}\nabla v\|_{L^6}^2\|\sqrt{t}\nabla  u\|_{L^2}^{2/3} \|{t} \nabla^2 u\|_{L^2}^{4/3}  ,
\end{aligned}$$
$$\begin{aligned} 
J_3&\leq\|{t}^{3/2}\nabla\dot P\|_{L^2}\|{t} v_{t}\|_{L^4}\|{t}^{1/2}\nabla u\|_{L^4}\\
&\leq \eps\|{t}^{3/2}\nabla\dot P\|_{L^2}^2+C_\eps\|{t} v_{t}\|_{L^4}^4
\|{t}^{1/2}\nabla u\|_{L^2}^2+\|{t}^{1/2}\nabla^2u\|_{L^2}^2,
\end{aligned}$$
$$\begin{aligned} 
J_4&\leq \|{t}^{3/2}\nabla\dot P\|_{L^2}\|{t}^{1/6}v\|_{L^6}^2\|{t}^{7/6}\nabla^2 u\|_{L^6}\\
&\leq C \|{t}^{3/2}\nabla\dot P\|_{L^2}\|{t}^{1/6}v\|_{L^6}^2\|\sqrt{\rho{t}}\dot u\|_{L^2}^{1/3}\|{t}^{3/2}\nabla\dot u\|_{L^2}^{2/3}\\
&\leq\eps\|{t}^{3/2}\nabla\dot P\|_{L^2}^2 +C_\eps\|\sqrt{\rho{t}}\dot u\|_{L^2}^{2}
+C_\eps\|{t}^{1/6}v\|_{L^6}^6 \|{t}^{3/2}\nabla\dot u\|_{L^2}^{2},\\
J_5&\leq \eps\|{t}^{3/2}\sqrt\rho\ddot u\|_{L^2}^2 +\frac{C_\eps}{\rho^*} \|{t}^{3/2}F\|_{L^2}^2.\end{aligned}$$
Thanks to H\"older inequality,  \eqref{eq:lad} and  \eqref{eq:stokesLp},  we have 
$$\begin{aligned} 
 \|{t}^{3/2}F\|_{L^2}^2&\leq \|\sqrt t\nabla v\|_{L^4}^2
\|{t}(\nabla P,\nabla^2u)\|_{L^4}^2+\|{t}\nabla^2 v\|_{L^4}^2\|\sqrt t\nabla u\|_{L^4}^2,\\
&\lesssim \|\sqrt t\nabla v\|_{L^4}^2\|\sqrt{\rho{t}}\dot u\|_{L^2}\|{t}^{3/2}\nabla\dot u\|_{L^2}
+\|{t}\nabla^2 v\|_{L^4}^2\|\sqrt t\nabla u\|_{L^2}\|\sqrt t\nabla^2u\|_{L^2}\\
 &\lesssim \|\sqrt{\rho{t}}\dot u\|_{L^2}^2\!+\!\|\sqrt t\nabla^2 u\|_{L^2}^2\!+\!\|\sqrt t\nabla v\|_{L^4}^4\|{t}^{3/2}\nabla\dot u\|_{L^2}^2
 \!+\!\|{t}\nabla^2v\|_{L^4}^4\|\sqrt t\nabla u\|_{L^2}^2. 
\end{aligned}$$
To close the estimates, we need to bound $t^{3/2}\nabla\dot P$ and $t^{3/2}\nabla^2\dot u$  in 
$L^2(\R_+\times\Omega).$
Now, we observe that the couple $(\dot u,\nabla\dot P)$ satisfies the inhomogeneous Stokes system
\begin{equation}\label{eq:inhomo}-\Delta \dot u+\nabla\dot P= F-\rho\ddot u\andf \div\dot u= {\rm Tr}(\nabla v\cdot\nabla u)\quad
\hbox{in }\ \Omega\end{equation}
with boundary condition $\dot u|_{\d\Omega}=0$ if $\Omega$ is a bounded domain, $\dot u(t)\to0$ at infinity
(due to $\dot u(t)\in L^2$ for all $t>0$) in the case $\Omega=\R^2,$ and 
$$\int_{\T^2}\rho\dot u\,dx=0\quad\hbox{if}\quad \Omega=\T^2.$$
Hence, applying \eqref{eq:stokesLp} with $p=2$ guarantees that 
\begin{equation}\label{eq:stokes3/2}\|\nabla^2\dot u,\nabla\dot P\|_{L^2}^2\lesssim \|F\|_{L^2}^2
+\|\rho\ddot u\|_{L^2}^2 +\|\nabla^2 v\otimes\nabla u\|_{L^2}^2+\|\nabla v\otimes\nabla^2 u\|_{L^2}^2.\end{equation}
The last two terms are parts of $F.$ Hence bounding $\|{t}^{3/2}F\|_{L^2}$ as above
and putting together  with  the previous inequalities, we conclude after time integration that
$$
\displaylines{X_{3}(t)
:=\|t^{3/2}\nabla\dot u(t)\|_{L^2}^2+\int_0^t\|\tau^{3/2}(\sqrt\rho\,\ddot u,\nabla\dot P,\nabla^2\dot u)\|_{L^2}^2\,d\tau
\hfill\cr\hfill\lesssim\int_0^t\bigl(\|v\|_{L^4}^4\!+\!\|\tau^{1/6}v\|_{L^6}^6\!+\!\|\tau^{1/2}\nabla v\|_{L^4}^4\bigr)\|\tau^{3/2}\nabla\dot u\|_{L^2}^2\,d\tau
+\int_0^t\|\tau^{1/2}\nabla^2u,\sqrt{\rho\tau}\dot u\|_{L^2}^2\,d\tau
\hfill\cr\hfill+\int_0^t\bigl(\|\tau v_\tau\|_{L^4}^4+\|\tau\nabla^2v\|_{L^4}^4\bigr)\|\tau^{1/2}\nabla u\|_{L^2}^2\,d\tau
\hfill\cr\hfill+\int_0^t\|\tau^{1/6}v\|_{L^6}^2\|\sqrt\tau\nabla v\|_{L^6}^2\|\sqrt\tau\nabla u\|_{L^2}^{2/3}
\|\tau\nabla^2 u\|_{L^2}^{4/3}\,d\tau.}
$$
After using Gronwall lemma and the inequalities of the previous steps, we get
\begin{multline}\label{eq:weight3/2}
X_{3}(t)\leq  C\|u_0\|_{L^2}^2  e^{C_{3}^v(t)}\with\\
C_{3}^v(t):=C\int_0^t\bigl(\|v\|_{L^4}^4+(1+\|\tau^{1/4}v\|_{L^4}^4)\|v\|_{L^6}^3+\|\tau^{1/6}v\|_{L^6}^6
+\|\sqrt\tau\nabla v\|_{L^6}^3+\|\tau^{1/2}v_\tau\|_{L^2}^2
\\+\|\tau^{1/2}\nabla v\|_{L^4}^4+\|\tau\nabla^2v\|_{L^4}^4 +\|\tau v_\tau\|_{L^4}^4\bigr)d\tau. 
\end{multline}


\subsection{The proof of Theorem \ref{thm:1}}

Let us fix some data $(\rho_0,u_0)$ such that $u_0\in L^2$ and $0<\rho_*\leq\rho_0\leq\rho^*<\infty.$
Then we smooth out the velocity so as to get a sequence $(u_0^n)_{n\in\N}$  of $H^1$ divergence 
free vector-fields (vanishing at  $\d\Omega$ in the bounded domain case) that converges
strongly to $u_0$ in $L^2.$ 
It is known (see \cite{DM1} for the bounded domain or torus cases, and \cite{PZZ} for the 
$\R^2$ case)  that such data generate a unique global solution $(\rho^n,u^n,\nabla P^n)$ with 
relatively smooth velocity.  In particular, the computations leading to the estimates of
the previous subsection may be justified for $\rho=\rho^n,$ $u=v=u^n$,  and we get for all $t\geq0$
for some constant depending only on $\rho_*,$ $\rho^*$ and $\Omega,$
\begin{equation}\label{eq:L2n}
X_0^n(t):= \|(\sqrt{\rho^n}\,u^n)(t)\|_{L^2}^2 +2\int_0^t\|\nabla u^n\|_{L^2}^2\,d\tau\leq \|\sqrt \rho_0\,u_0^n\|_{L^2}^2,
\end{equation}
\begin{equation}\label{eq:weight3n}
X^n_{1}(t)
\leq \|\sqrt \rho_0u_0^n\|_{L^2}^2  e^{C_1^n(t)} \with C_1^n(t):= C\int_0^t \|u^n\|_{L^4}^4\,d\tau,
\end{equation} 
  \begin{multline}\label{eq:weightttn}
      X^n_2(t)\leq \|\sqrt\rho_0\,u_0^n\|_{L^2}^2  e^{C_2^n(t)}
      \with \\ C_2^n(t):= C\Bigl(\underset{\tau\in[0,t]}{\sup}\|\tau^{1/4} u^n(\tau)\|_{L^4}^4+
 \int_0^t \bigl(\|u^n\|_{L^4}^4+\|\sqrt\tau\nabla u^n\|_{L^4}^4+\|\sqrt{\tau}u^n_\tau\|_{L^2}^2\bigr)d\tau\Bigr),\end{multline}
 \begin{multline}\label{eq:weight3/2n}
X^n_{3}(t)\leq  C\|u_0^n\|_{L^2}^2   e^{C_3^n(t)}\with 
C^n_3(t):=C\int_0^t\bigl((1+\|\tau^{1/4}u^n\|_{L^4}^4)\|u^n\|_{L^6}^3\\+\|\tau^{1/6}u^n\|_{L^6}^6
+\|\sqrt\tau\nabla v^n\|_{L^6}^3+\|\tau^{1/2}v^n_\tau\|_{L^2}^2
+\|u^n,\tau^{1/2}\nabla u^n,\tau\nabla^2u^n,\tau u^n_\tau\|_{L^4}^4\bigr)d\tau. 
\end{multline}
Above, $X^n_j$ for $j\in\{1,2,3\}$   are the quantities defined in \eqref{eq:weight3}, \eqref{eq:weighttt} and \eqref{eq:weight3/2},
respectively,  pertaining
to $(\rho^n,u^n,\nabla P^n).$ 
\smallbreak
The fundamental point is that all the norms coming into play in $C_1^n,$ $C_2^n$ and $C_3^n$ may be bounded by means of 
  $M:=\sup_{n\in\N} \|u_0^n\|_{L^2},$ $\rho_*$ and $\rho^*.$ 
  For $C_1^n,$ this just stems from \eqref{eq:L2ter} with $p=4.$ 
  Hence we have for some $C_M:= C(\rho_*,\rho^*,M),$
  $$
\underset{t\in\R_+}{\sup} X^n_{1}(t)\leq C_M.$$
Combining with  \eqref{eq:lad} and \eqref{eq:L2n}, we thus get
\begin{align}
\underset{t\in\R_+}{\sup}  \|t^{1/4}u^n(t)\|_{L^4}^4&\lesssim  \|u^n\|_{L^\infty(L^2)}^2\|\sqrt t\nabla u^n\|_{L^\infty(L^2)}^2
\lesssim M^2C_M,\label{eq:est1}\\
\|\sqrt t\nabla u^n\|_{L^4(L^4)}^4 &\lesssim  \|\sqrt t \nabla u^n\|_{L^\infty(L^2)}^2\|\sqrt t\nabla^2 u^n\|_{L^2(L^2)}^2
\lesssim C_M^2,\label{eq:est2}\\
\|\sqrt{\rho t}u^n_t\|_{L^2(L^2)}^2&\lesssim C_M,\label{eq:est3}
\end{align}
whence, remembering \eqref{eq:weightttn}, we have up to a change of $C_M,$   
$$X^n_2(t)\leq C_M\quad\hbox{for all }\ t\geq0.$$ 
Finally, one has to bound the terms of $C^n_3$ independently of $n.$ Let us just treat the third one as an example. 
We write that, owing to \eqref{eq:GN} with $p=6,$ 
$$\begin{aligned}\int_0^\infty\|t^{1/6}u^n\|_{L^6}^6\,dt&\lesssim\int_0^\infty \|u^n\|_{L^2}^2 \|\sqrt t\nabla u^n\|_{L^2}^2
\|\nabla u^n\|_{L^2}^2\,dt\\
&\leq \|u^n\|_{L^\infty(L^2)}^2 
\|\sqrt t\nabla u^n\|_{L^\infty(L^2)}^2
\|\nabla u^n\|_{L^2(L^2)}^2 \lesssim M^4C_M.\end{aligned}$$
As a conclusion, we deduce that there exists a constant, still denoted by $C_M$ such that, for all $n\in\N,$ we have
$$ \underset{t\in\R_+}{\sup} \bigl(X_0^n(t)+X_{1}^n(t)+X_2^n(t)+X_{3}^n(t))\leq  C_M.$$
Regarding the density, the divergence free property of $u^n$ clearly ensures that
$$\forall n\in\N,\,\forall t\in\R_+,\; \rho_*\leq \rho^n(t)\leq \rho^*.$$
At this point, arguing like in the classical proofs of global existence of weak solutions for (INS)  (see e.g. \cite{BF,PLL}), 
one can conclude that 
$(\rho^n,u^n,\nabla P^n)_{n\in\N}$ converges weakly, up to subsequence
to a global distributional solution of (INS) satisfying not only \eqref{eq:rho0} and the usual energy inequality
\eqref{eq:L2INS},  but also 
$$ \underset{t\in\R_+}{\sup} \bigl(X_{1}(t)+X_2(t)+X_{3}(t)\bigr)\leq  C_{\rho_*,\rho^*,\|u_0\|_{L^2}}.$$

 
\section{More decay estimates}   \label{s:lip}

The goal of this section is to prove that the solutions to the linearized momentum equation \eqref{eq:LINS0} 
with $\rho$ satisfying \eqref{eq:rho0} and $v$ verifying the regularity properties listed in Theorem \ref{thm:1}, 
supplemented with divergence free $u_0$ in $\wt B^0_{\rho_0,1}$ satisfy \eqref{eq:Lip}. 
Achieving the result requires several steps. The cornerstones are 
 estimates in $\dot H^1$ and $\dot H^{-1}$ for the solution to \eqref{eq:LINS0} 
 (in addition to the estimates that have been proved hitherto), and the interpolation method that has been described 
 in Section \ref{s:results}.

	\subsection{A priori estimates involving $\dot H^1$ regularity of $u_0$}

In this part, we consider System  \eqref{eq:LINS0}  with some source term $g.$ Our aim is to prove
estimates of $u$ in $\dot H^1,$ in terms of $\nabla u_0\in L^2$ and $g$ in $L^2(L^2).$ 
Considering here  a source term  will be needed when proving
estimates in $\dot H^{-1}$ by means of a duality method.  

\subsubsection{Basic estimates in $\dot H^1$}  Let $f:=g/\rho.$ Taking 
the $L^2$ scalar product of the first line of \eqref{eq:LINS0} with $u_t$ yields, after integrating by parts in the term with $\Delta u,$
\begin{equation}\label{eq:H1}
\frac12\frac d{dt}\|\nabla u\|_{L^2}^2+\|\sqrt\rho\, u_t\|_{L^2}^2= \int_{\Omega} \sqrt\rho(f-v\cdot\nabla u)\cdot (\sqrt\rho\,u_t)\,dx.\end{equation}
By virtue of Young and H\"older inequality, we have 
$$
 \int_{\Omega} \sqrt\rho(f-v\cdot\nabla u)\cdot (\sqrt\rho\,u_t)\,dx\leq \frac12\|\sqrt\rho\, u_t\|_{L^2}^2 
 +\|\sqrt\rho\, f\|_{L^2}^2 + \|\sqrt\rho\, v\cdot\nabla u\|_{L^2}^2.
 $$
Since  $\dot u = u_t +v\cdot\nabla u,$ we may write
$$\|\sqrt\rho\,\dot u\|_{L^2}\leq \|\sqrt\rho\,u_t\|_{L^2} +\|\sqrt\rho\,v\cdot\nabla u\|_{L^2}.$$
Remembering  \eqref{eq:weight1a},  this yields for some constant $c_\Omega$ depending only on $\Omega$:
\begin{equation}\label{eq:H1a}
\frac d{dt}\|\nabla u\|_{L^2}^2+\frac14\|\sqrt\rho\, (u_t, \dot u)\|_{L^2}^2
+\frac{c_\Omega}{\rho^*}\|\nabla^2u,\nabla P\|_{L^2}^2
\leq 4\|\sqrt\rho\, f\|_{L^2}^2.\end{equation}
In the end, combining with Gronwall lemma and remembering that $f=g/\rho,$ we get 
 \begin{multline}\label{eq:H1ter}
\|\nabla u(t)\|_{L^2}^2+ \frac14 \int_0^t\|\sqrt\rho\, (u_t, \dot u)\|_{L^2}^2\,d\tau +\frac{c_\Omega}{\rho^*}
\int_0^t\|\nabla^2 u,\nabla P\|_{L^2}^2\,d\tau
\\\leq e^{C\rho^*\int_0^t\|\sqrt\rho\, v\|_{L^4}^4\,d\tau} \biggl(\|\nabla u_0\|_{L^2}^2
+ 4\int_0^te^{-C\rho^*\int_0^\tau\|\sqrt\rho\, v\|_{L^4}^4\,d\tau'}
\|g/\sqrt\rho\|_{L^2}^2\,d\tau\biggr)\cdotp
\end{multline}

\subsubsection{Decay  estimates  with weight $\sqrt t$:} 

Assuming in the rest of this part that  $g\equiv0$, we proceed as for proving \eqref{eq:weighttt} 
except that we take the $L^2$ scalar product of
\eqref{eq:utt} with $tu_t,$ instead of $t^2 u_t.$ In this way, we get
\begin{multline}\label{eq:H1half}
\frac12\frac d{dt}\Bigl(\|\sqrt{\rho t}u_t\|_{L^2}^2+\frac12\|\nabla u\|_{L^2}^2\Bigr)+\|\sqrt t\nabla u_t\|_{L^2}^2
 \\=\int_\Omega t\div(\rho v) \dot u\cdot u_t\,dx
-\int_\Omega t\rho (v_t\cdot\nabla u)\cdot u_t\,dx-\int_\Omega\rho(v\cdot\nabla u)\cdot u_t\,dx.
\end{multline}
Combining \eqref{eq:GN}, Young inequality  and \eqref{eq:weight1a} gives
$$-2\int_\Omega \rho(v\cdot\nabla u)\cdot u_t\,dx
\leq \frac12\|\sqrt \rho u_t\|_{L^2}^2 +\frac{c_\Omega}{\rho^*}\|\nabla^2u\|_{L^2}^2+C\rho^*\|\sqrt\rho v\|_{L^4}^4\|\nabla u\|_{L^2}^2.
$$
Hence, adding up half \eqref{eq:H1a} to \eqref{eq:H1half} yields
\begin{multline}\label{eq:H1half2}
\frac12\frac d{dt}\Bigl(\|\sqrt{\rho t}u_t\|_{L^2}^2+\|\nabla u\|_{L^2}^2\Bigr)\\
+\|\sqrt t\nabla u_t\|_{L^2}^2+\frac16\|\sqrt\rho(u_t,\dot u)\|_{L^2}^2
+{c_\Omega}\|\nabla^2u,\nabla P\|_{L^2}^2 \\
\leq C\|\sqrt\rho v\|_{L^4}^4\|\nabla u\|_{L^2}^2+\int_\Omega t\div(\rho v) \dot u\cdot u_t\,dx
-\int_\Omega t\rho (v_t\cdot\nabla u)\cdot u_t\,dx.
\end{multline}
We integrate by parts in  the second term of the right-hand side, which gives
$$
\int_\Omega t\div(\rho v) \dot u\cdot u_t\,dx= -\int_\Omega t\bigl(\rho v\cdot\nabla\dot u\bigr)\cdot u_t\,dx-\int_\Omega t\bigl(\rho v\cdot\nabla u_t\bigr)\cdot\dot u\,dx.$$
The two integrals may be handled  as for proving \eqref{eq:weighttt}. We get
$$
\int_\Omega t\div(\rho v) \dot u\cdot u_t\,dx\leq \frac14\|\sqrt t(\nabla\dot u,\nabla u_t)\|_{L^2}^2
+C\|\sqrt\rho\, v\|_{L^4}^4\|\sqrt{\rho t}(\dot u, u_t)\|_{L^2}^2.$$
To bound the last term of \eqref{eq:H1half2}, we proceed as follows (for all $\eps>0$):
$$\begin{aligned}
\int_\Omega t\rho (v_t\cdot\nabla u)\cdot u_t\,dx&\leq \|\sqrt{\rho t} v_t\|_{L^2}\|\sqrt{\rho t} u_t\|_{L^4}\|\nabla u\|_{L^4}\\
&\leq \eps \|\nabla^2u\|_{L^2}^{2}+ \eps\|\sqrt{t} \nabla u_t\|_{L^2}^2+ C_\eps \|\sqrt{\rho t} v_t\|_{L^2}^2\|\sqrt{\rho t} u_t\|_{L^2}
\|\nabla u\|_{L^2}.
\end{aligned}$$
From the definition of $\dot u$ and \eqref{ed2}, it is easy to get 
\begin{equation}\label{eq:H1half4}
\|\sqrt t(\nabla^2 u,\nabla P,\sqrt\rho \dot u)\|_{L^2}\leq C\bigl(\|\sqrt{\rho t}\, u_t\|_{L^2} + \|\sqrt\rho\, v\|_{L^4}^2 
\|\sqrt t\nabla u\|_{L^2}\bigr)\cdotp\end{equation}
By H\"older inequality, \eqref{eq:GN} and \eqref{eq:stokesLp} with $p=4,$ we also notice that
$$
\|\sqrt t\nabla\dot u\|_{L^2}-\|\sqrt t\nabla u_t\|_{L^2}
\lesssim\|\sqrt t\nabla v\|_{L^4} \|\nabla u\|_{L^2}^{1/2}\|\nabla^2u\|_{L^2}^{1/2}
+\|v\|_{L^4}\|\sqrt{\rho t}\dot u\|_{L^2}^{1/2}\|\sqrt t\nabla\dot u\|_{L^2}^{1/2}
$$
which implies that
$$
\|\sqrt t\nabla\dot u\|_{L^2}\leq 2\|\sqrt t\nabla u_t\|_{L^2}
+\frac1{4}\|\nabla^2 u\|_{L^2}+ C\bigl(\|\sqrt t\nabla v\|_{L^4}^2\|\nabla u\|_{L^2}
+\|v\|_{L^4}^2\|\sqrt{\rho t}\dot u\|_{L^2}\bigr)\cdotp$$
Inserting all the above inequalities in \eqref{eq:H1half2}, then using Gronwall lemma
and \eqref{eq:weight3},  we discover that
\begin{multline}\label{eq:H1half5}
Y_1(t)\lesssim\|\nabla u_0\|_{L^2}^2e^{\wt C_1^v(t)}\with \wt C_1^v(t):=C\int_0^t\bigl(\|\sqrt\tau\nabla v, v\|_{L^4}^4+ \|\sqrt{\rho \tau} v_\tau\|_{L^2}^2\bigr)d\tau\\
\andf Y_1(t):= \|\sqrt{\rho t}(u_t,\dot u)\|_{L^2}^2+\|\nabla u\|_{L^2}^2
+\frac{c_\Omega}{\rho^*}\|\sqrt t(\nabla^2u,\nabla P)\|_{L^2}^2\hspace{3cm}\\
+\int_0^t\Bigl(\|\sqrt\tau(\nabla u_\tau,\nabla\dot u)\|_{L^2}^2
+\|\sqrt\rho(u_\tau,\dot u)\|_{L^2}^2+ \frac{c_\Omega}{\rho^*}\|\nabla^2u,\nabla P\|_{L^2}^2\Bigr)d\tau.\end{multline}

\subsubsection{Decay  estimates  with weight $t$:}
Still assuming $f\equiv0,$ we now take the $L^2$ scalar product of \eqref{eq:ddotu}  with  $tD_t(t\dot u)$
and get
$$\displaylines{\frac12\frac d{dt}\|\nabla(t\dot u)\|_{L^2}^2 + \|\sqrt \rho D_t(t\dot u)\|_{L^2}^2
\hfill\cr\hfill=\int_{\Omega} \bigl(tF-t\nabla\dot P+ \rho\dot u\bigr)\cdot D_t(t\dot u)\,dx
+\int_{\Omega} \Delta(t\dot u)\cdot(v\cdot\nabla(t\dot u))\,dx.}
$$
Hence for all $\varepsilon>0,$
\begin{multline}\label{eq:weight6}\frac12\frac d{dt}\|\nabla(t\dot u(t))\|_{L^2}^2 + \|\sqrt\rho\, D_t(t\dot u)\|_{L^2}^2
\leq\varepsilon  \bigl(\|\nabla^2(t\dot u)\|_{L^2}^2+\|\sqrt\rho\,D_t(t\dot u)\|_{L^2}^2\bigr)\\+ 
\frac1{\varepsilon}\Bigl(\|v\cdot\nabla(t\dot u)\|_{L^2}^2+\|\sqrt\rho\,\dot u\|_{L^2}^2+\Bigl\|\frac{t F-t\dot \nabla P}{\sqrt\rho}\Bigr\|_{L^2}^2\Bigr)\cdotp\end{multline}
To continue the computations, we need to estimate $t\dot P$ and $t\nabla^2\dot u.$
To this end, one can remember Inequality \eqref{eq:stokes3/2} and observe that 
$$\|\sqrt\rho t\ddot u\|_{L^2}\leq  \|\sqrt\rho\,D_t(t\dot u)\|_{L^2}+\|\sqrt\rho\,\dot u\|_{L^2}.$$
Hence, taking $\eps$ small enough in \eqref{eq:weight6} yields: 
\begin{multline}\label{eq:weight8}\|\nabla(t\dot u(t))\|_{L^2}^2 + \|\sqrt \rho D_{t}({t}\dot u),\nabla({t}\dot P),\nabla^2({t}\dot u)\|_{L^2}^2
\lesssim \|\sqrt\rho\,\dot u\|_{L^2}^2 \\+  \|v\cdot\nabla({t}\dot u)\|_{L^2}^2
+\| {t}\nabla^2v\otimes \nabla u\|_{L^2}^2+ \| {t}\nabla^2u\otimes \nabla v\|_{L^2}^2+\|{t}\nabla v\cdot\nabla P\|_{L^2}^2.\end{multline}
The first term of the right-hand side may be bounded according to \eqref{eq:H1ter}. So we are left with bounding all the other terms.
We have 
$$\begin{aligned}
\|v\cdot\nabla({t}\dot u)\|_{L^2}^2
&\leq\frac C\eps\|v\|_{L^4}^4\|\nabla({t}\dot u)\|_{L^2}^2+\eps\|\nabla^2({t}\dot u)\|_{L^2}^2
\\
\|{t}\nabla^2v\otimes \nabla u\|_{L^2}^2
&\lesssim \|{t}\nabla^2v\|_{L^4}^2\biggl(\|\nabla u\|_{L^2}^2\|\nabla^2 u\|_{L^2}^2\biggr)^{1/2}\\
\| {t}\nabla^2u\otimes \nabla v\|_{L^2}^2+\|{t}\nabla v\cdot\nabla P\|_{L^2}^2
&\lesssim  \|\sqrt {t}(\nabla^2 u,\nabla P)\|_{L^4}^2 \|\sqrt {t} \nabla v\|_{L^4}^2. \end{aligned}$$
Using regularity   estimates for \eqref{edu1}
and \eqref{eq:lad} yields
$$
\|\sqrt t(\nabla^2 u,\nabla P)\|_{L^4}^2 \lesssim \| \sqrt t\dot u\|_{L^4}^2 \lesssim 
\|\dot u\|_{L^2} \|t\nabla \dot u\|_{L^2}.
$$
Hence 
$$\begin{aligned}
\| {t}\nabla^2u\otimes \nabla v\|_{L^2}^2\!+\!\|{t}\nabla v\cdot\nabla P\|_{L^2}^2
&\lesssim  \|\sqrt {t}\nabla v\|_{L^4}^2  \|\dot u\|_{L^2} \|{t}\nabla \dot u\|_{L^2}\\
&\lesssim  \|\dot u\|_{L^2}^2 \!+\!  \|\sqrt {t}\nabla v\|_{L^4}^4 \|{t}\nabla \dot u\|_{L^2}^2.\end{aligned}$$
Plugging all these inequalities in \eqref{eq:weight6},  using \eqref{eq:H1ter} and integrating on $[0,t]$ gives
$$\displaylines{Y_2(t):=
\|\nabla(t\dot u(t))\|_{L^2}^2 + \int_0^t\|\sqrt \rho D_\tau(\tau\dot u),\nabla(\tau\dot P),\nabla^2(\tau\dot u)\|_{L^2}^2\,d\tau
\\\hfill\cr\hfill
\lesssim\!  \int_0^t\!\!\bigl(\|v\|_{L^4}^4\!+\!  \|\sqrt \tau\nabla v\|_{L^4}^4 \bigr) \|\tau\nabla \dot u\|_{L^2}^2d\tau
+ \|\nabla u_0\|_{L^2}^2e^{C\!\int_0^t\|v\|_{L^4}^4d\tau}\bigl(1+ \|\tau\nabla^2v\|_{L_t^4(L^4)}^4\bigr)\cdotp}$$
At this stage, Gronwall lemma enables us to conclude that 
\begin{equation}\label{eq:weight9} Y_2(t)\leq C  \|\nabla u_0\|_{L^2}^2e^{\wt C_2^v(t)}\with
\wt C_2^v(t):= C\!\int_0^t\|v,\sqrt \tau\nabla v,\tau\nabla^2v\|_{L^4}^4\,d\tau.\end{equation}

\subsubsection{Estimates in $\dot H^{s}$ for $s\in(0,1)$}
If we denote by $E$ the linear  operator that associates to $(u_0,g)$ the solution $u$ to \eqref{eq:LINS0} on $\R_+\times\Omega,$ 
then the previous inequalities \eqref{eq:L2} and \eqref{eq:H1ter}  and the fact that the norms in $L^2(\rho\,dx)$
or $L^2(dx)$ are equivalent (recall \eqref{eq:rho}) ensure that:
\begin{itemize}
\item $E$ maps  $L^2(\Omega)\times L^2(\R_+;\dot H^{-1}(\Omega))$ to $L^\infty(\R_+;L^2(\Omega))\cap L^2(\R_+;\dot H^1(\Omega))$;
\item $E$ maps  $\dot H^1(\Omega)\times L^2(\R_+;L^2(\Omega))$ to $L^\infty(\R_+;\dot H^1(\Omega))\cap L^2(\R_+;\dot H^2(\Omega)).$
\end{itemize}
Consequently, the  complex interpolation theory ensures that, for all $s\in[0,1],$  
 $$E : \dot H^s(\Omega)\times L^2(\R_+;\dot H^{s-1}(\Omega))\to L^\infty(\R_+;\dot H^s(\Omega))\cap L^2(\R_+;\dot H^{s+1}(\Omega))$$
with, for some constant $C_\rho$ depending only on $\rho_*$ and $\rho^*,$  the bound:
 \begin{multline}\label{eq:Hs}
\underset{t\in[0,T]}{\sup} \|u(t)\|_{\dot H^s}^2+
\int_0^T\|u\|_{\dot H^{s+1}}^2\,dt\\\leq C_\rho e^{Cs\rho^*\int_0^T\|\sqrt\rho\, v\|_{L^4}^4\,dt} \biggl(\|u_0\|_{\dot H^s}^2
+ \int_0^T\|g\|_{\dot H^{s-1}}^2\,dt\biggr)\cdotp\end{multline}
For $g\equiv0,$  due to \eqref{eq:weighttt}, \eqref{eq:weight9}, 
for all $t>0,$ the linear  operator   that associates to $u_0$ the function $t\dot u(t)$ with  $u$ being the solution to \eqref{eq:LINS0}
with no source term maps $L^2$ to $L^2$ and $\dot H^1$ to $\dot H^1.$
 Hence it maps $\dot H^s$ to $\dot H^s$ for all $s\in[0,1]$ and we have: 
 \begin{equation}\label{eq:weighttHs}
\|t\dot u(t)\|_{\dot H^s}\leq C e^{\frac s2\wt C_2^v(t)} \|u_0\|_{\dot H^s}\quad\hbox{for all }\ t>0.
\end{equation}


\subsection{Estimates in negative Sobolev spaces}

We here prove estimates for \eqref{eq:LINS0} in the case of initial data in Sobolev space with negative regularity.

\subsubsection{Data in $\dot H^{-1}$}

To estimate $\sqrt \rho\,u$ in $L^2(0,T\times\Omega),$  	 we consider  the following \emph{backward} parabolic system: 
	\begin{equation}\label{eq:backward}\left\{\begin{aligned}
 &\rho w_{t}+\rho v\cdot \nabla w + \Delta w+\nabla Q=  \rho u, \\
&\div w=0,\\ &w|_{t=T}=0.
\end{aligned}\right.\end{equation}
By definition of $w,$ we have 	
$$\int_0^T\!\!\!\int_{\Omega}  u\cdot (\rho u)\,dx\,dt 
= \int_0^T\!\!\!\int_{\Omega} u\cdot\bigl(\rho w_{t}+\rho v\cdot \nabla w + \Delta w+\nabla Q\bigr)\,dx\,dt.$$
	Integrating  by parts and remembering  that $\d_t\rho+\div(\rho v)=0$ and $\div w=0$ yields 
$$\displaylines{\int_0^T\!\!\!\int_{\Omega} \rho |u|^2\,dx\,dt 
= -	\int_0^T\!\!\!\int_{\Omega} \bigl (\rho\dot u -\Delta u+\nabla P\bigr)\cdot w\,dx\,dt
\hfill\cr\hfill+\int_{\Omega}\bigl( (\rho u)(T)\cdot w(T)-\rho_0 u_0\cdot w(0)\bigr)\,dx.}$$
	As $w(T)=0$ and $u$ satisfies \eqref{eq:LINS0}, we conclude that 
	$$\int_0^T\!\!\!\int_{\Omega}  \rho |u|^2\,dx\,dt = -\int_{\Omega}\rho_0 u_0\cdot w(0)\,dx\leq \|\rho_0 u_0\|_{\dot H^{-1}} \|\nabla w(0)\|_{L^2}.$$
Now, adapting  the proof of 
 \eqref{eq:H1ter}  to \eqref{eq:backward} yields  
$$
\|\nabla w(0)\|_{L^2}^2\leq  e^{\rho^*\int_0^T\|\sqrt\rho\, v\|_{L^4}^4\,dt}\|\sqrt\rho u\|_{L^2(0,T\times\Omega)}^2.$$
Hence we have 
\begin{equation}\label{eq:uL2L2}
\|\sqrt\rho\,u\|_{L^2(0,T\times\Omega)}\leq  \|\rho_0 u_0\|_{\dot H^{-1}} e^{\frac{\rho^*}2\int_0^T\|\sqrt\rho\,v\|_{L^4}^4\,dt}.\end{equation}
In order to bound $\cP(\rho u)(T)$ in $\dot H^{-1},$  we start from  $$
\|\cP(\rho u)(T)\|_{\dot H^{-1}}=\underset{\begin{smallmatrix}\|w_T\|_{\dot H^1}=1\\\div w=0\end{smallmatrix}}{\sup} \int_{\Omega} (\rho u)(T)\cdot w_T\,dx,$$
and solve \eqref{eq:backward} with no source term and data $w_T$ at time $t=T.$  Hence, 
$$\begin{aligned}
0&=\int_0^T\!\!\!\int_{\Omega}\bigl(\rho w_{t}+\rho v\cdot \nabla w + \Delta w+\nabla Q\bigr)\cdotp u\,dx\,dt\\
&=  -	\int_0^T\!\!\!\int_{\Omega} \rho(\d_tu+v\cdot\nabla u-\Delta u\bigr)\cdot w\,dx\,dt
+\int_{\Omega}\bigl( \rho(T) u(T)\cdot w_T-\rho_0 u_0\cdot w(0)\bigr)\,dx.\end{aligned}
$$
Since $u$ satisfies \eqref{eq:LINS0} and $\div w=0,$ we get
\begin{equation}\label{eq:uu}\int_{\Omega} (\rho u)(T)\cdot w_T\,dx = \int_{\Omega}\rho_0u_0\cdot w(0)\,dx.\end{equation} 
As
$$\|\nabla w(0)\|_{L^2} \leq e^{\frac{\rho^*}2\int_0^T\|\sqrt\rho\,v\|_{L^4}^4\,dt} \|\nabla w_T\|_{L^2},$$
we conclude that
\begin{equation}\label{eq:uH-1} 
\|\cP(\rho u)(T)\|_{\dot H^{-1}}\leq \|\cP(\rho_0 u_0)\|_{\dot H^{-1}}   e^{\frac{\rho^*}2\int_0^T\|\sqrt\rho\,v\|_{L^4}^4\,dt}\cdotp\end{equation}

\subsubsection{Estimates in $\dot H^{-s}$ for $s\in(0,1)$}

We start from:
 $$\|\cP(\rho u)(T)\|_{\dot H^{-s}}= \underset{\begin{smallmatrix}\|w_T\|_{\dot H^s}=1\\\div w=0\end{smallmatrix}}{\sup} 
 \int_{\Omega} (\rho u)(T)\cdot w_T\,dx.$$
Using \eqref{eq:uu}, we get for any  divergence free  $w_T\in\dot H^s$ with norm equal to $1,$
$$\left|  \int_{\Omega} (\rho u)(T)\cdot w_T\,dx\right|\leq \|\cP(\rho_0 u_0)\|_{\dot H^{-s}} \|w(0)\|_{\dot H^s},$$
where $w$ is the solution of \eqref{eq:backward} with no source term and data $w_T$ at time $T.$
\smallbreak
Keeping \eqref{eq:Hs} in mind, we easily conclude that 
\begin{equation}\label{eq:H-s}\|\cP(\rho u)(T)\|_{\dot H^{-s}}
\leq  C \|\cP(\rho_0 u_0)\|_{\dot H^{-s}}\: e^{\frac{Cs}2\rho^*\int_0^T\|\sqrt\rho\, v\|_{L^4}^4\,d\tau}.\end{equation}


\subsection{More time decay estimates}

In this paragraph, we point out a number of   time decay estimates for \eqref{eq:LINS0}
in Sobolev and Lebesgue spaces that may be deduced from what we proved hitherto and
basic interpolation results.

\subsubsection{Sobolev decay estimates}
They are summarized in the following proposition:
\begin{proposition}\label{p:decay1}  The following estimates hold: 
\begin{itemize}
\item 
For any $0\leq s\leq2$ and $0\leq s'\leq1,$  we have
\begin{equation}\label{eq:Hsdecay1}
\|u(t)\|_{\dot H^s}\leq C_{\rho,v}\,t^{-\frac{s+s'}2}\|\cP(\rho_0 u_0)\|_{\dot H^{-s'}},\qquad t>0.
\end{equation}
\item For any $0\leq s, s'\leq1,$ 
\begin{equation}\label{eq:Hsdecay2}
\|tu_t(t)\|_{\dot H^s}+ \|t\dot u(t)\|_{\dot H^s}\leq C_{\rho,v}\, t^{-\frac{s+s'}2}\|\cP(\rho_0 u_0)\|_{\dot H^{-s'}},\qquad t>0.
\end{equation}
\item For any $0\leq s\leq 1,$
\begin{align} \label{eq:H1decay1}
\|t\dot u(t), u(t)\|_{\dot H^1} &\leq Ce^{\wt C_2^v(t)+\wt C_3^v(t)}\,t^{\frac{s-1}2}\|u_0\|_{\dot H^s},\\
 \label{eq:H1decay2}
\|\dot u(t), u_t(t)\|_{L^2} &\leq Ce^{\wt C_2^v(t)+\wt C_3^v(t)}\,t^{-\frac{2-s}2}\|u_0\|_{\dot H^s},\\
 \label{eq:H1decay3}
\|\dot u(t)\|_{\dot H^s} &\leq Ce^{\wt C_2^v(t)+\wt C_3^v(t)}\,t^{-\frac{1+s}2}\|u_0\|_{\dot H^1}.
 \end{align}
  \end{itemize}
\end{proposition}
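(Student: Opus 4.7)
The plan is to reduce all three families of estimates to endpoint inequalities already established earlier in the section, and then to invoke interpolation of the (linear) solution map $u_0\mapsto u(t)$ with $\rho$ and $v$ kept fixed. Two mechanisms will be used in tandem: a semigroup splitting at time $t/2$, which combines the duality estimate \eqref{eq:H-s} on $[0,t/2]$ with the forward decay of Theorem \ref{thm:1} on $[t/2,t]$; and interpolation between integer Sobolev exponents at each endpoint.

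For \eqref{eq:Hsdecay1}, I would first bound $\|u(t/2)\|_{L^2}$ at the two endpoints $s'\in\{0,1\}$. The easy end follows from $\rho_*\|u\|_{L^2}^2\leq\langle\cP(\rho u),u\rangle\leq\|\cP(\rho u)\|_{L^2}\|u\|_{L^2}$, which gives $\|u(t/2)\|_{L^2}\leq C_\rho\|\cP(\rho_0u_0)\|_{L^2}$. The other end follows from the monotonicity of $\|\sqrt\rho\,u\|_{L^2}^2$ combined with \eqref{eq:uL2L2}: one has $(t/2)\|\sqrt\rho\,u(t/2)\|_{L^2}^2\leq\int_0^{t/2}\|\sqrt\rho\,u\|_{L^2}^2\,d\tau\leq C_{\rho,v}\|\cP(\rho_0u_0)\|_{\dot H^{-1}}^2$, hence $\|u(t/2)\|_{L^2}\leq C_{\rho,v}\,t^{-1/2}\|\cP(\rho_0u_0)\|_{\dot H^{-1}}$. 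Interpolation in $s'$ covers $s'\in[0,1]$ with target regularity $L^2$. To upgrade to arbitrary $s\in[0,2]$, I restart the equation at time $t/2$ and apply Theorem \ref{thm:1} on $[t/2,t]$, which yields $\|\nabla^ku(t)\|_{L^2}\lesssim(t/2)^{-k/2}\|u(t/2)\|_{L^2}$ for $k\in\{0,1,2\}$; Sobolev interpolation in $k$ then closes \eqref{eq:Hsdecay1}.

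For \eqref{eq:Hsdecay2}, restarting at $t/2$ and invoking \eqref{eq:weighttHs} gives $\|t\dot u(t)\|_{\dot H^s}\lesssim\|u(t/2)\|_{\dot H^s}$ for $s\in[0,1]$; inserting the estimate on $\|u(t/2)\|_{\dot H^s}$ derived in the previous step yields the required decay of $t\dot u$. The companion bound for $tu_t$ then follows from $u_t=\dot u-v\cdot\nabla u$, H\"older, Ladyzhenskaya \eqref{eq:lad}, and the decay of $u$ in $\dot H^{s+1}$ already controlled by \eqref{eq:Hsdecay1}.

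Finally, each of \eqref{eq:H1decay1}--\eqref{eq:H1decay3} is a one-parameter interpolation between $s=0$ and $s=1$, both endpoints being available: for \eqref{eq:H1decay1}, Theorem \ref{thm:1} and the $X_3$ part of \eqref{eq:weight3/2} at $s=0$, and \eqref{eq:H1ter} and \eqref{eq:weight9} at $s=1$; for \eqref{eq:H1decay2}, the $X_2$ bound of Theorem \ref{thm:1} at $s=0$ and $Y_1$ of \eqref{eq:H1half5} at $s=1$; for \eqref{eq:H1decay3}, \eqref{eq:H1half5} at $s=0$ and \eqref{eq:weight9} at $s=1$. The main obstacle I anticipate is constant book-keeping: one must verify that each endpoint constant depends only on $\rho_*,\rho^*$ and energy-type norms of $v$, and that the exponentials $\wt C_j^v$ arising at the two endpoints combine additively, not multiplicatively, under interpolation of the solution operator. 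A subordinate nuisance is that restarting at time $t/2$ produces the weight $(t/2)\dot u(t)$ rather than $t\dot u(t)$, a discrepancy absorbed in a universal constant but worth making explicit.
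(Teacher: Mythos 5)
Your overall skeleton is the same as the paper's: obtain the $\dot H^{-s'}$ dependence at the $L^2$ level at time $t/2$, then restart the (linear) system \eqref{eq:LINS0} on $[t/2,t]$ and use the forward smoothing estimates \eqref{eq:decay1}--\eqref{eq:decay2} together with interpolation between integer derivative levels (your citation of Theorem \ref{thm:1} for the restart should really be to \eqref{eq:decay1}--\eqref{eq:decay2}, which are the linear-system versions, but that is only a labeling issue). Where you genuinely diverge is the treatment of the $s'$-dependence: the paper proves \eqref{eq:Hsdecay1a} for \emph{every} $s'\in[0,1]$ at once by duality, testing $\cP(\rho u)(t)$ against divergence-free $w$ and using the backward system \eqref{eq:backward} together with the already-proved positive-regularity smoothing $\|w(0)\|_{\dot H^{s'}}\le Ct^{-s'/2}\|w\|_{L^2}$; you instead get the two endpoints ($s'=0$ from \eqref{eq:keyL2} plus energy monotonicity -- note your displayed chain should be applied at time $0$, not at $t/2$ -- and $s'=1$ via the nice shortcut combining monotonicity of $\|\sqrt\rho u\|_{L^2}$ with \eqref{eq:uL2L2}) and then interpolate in $s'$. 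That interpolation is not free: it requires identifying the real/complex interpolation spaces of the \emph{divergence-free} scale $[L^2_\sigma,\dot H^{-1}_\sigma]_{s'}$ with $\dot H^{-s'}_\sigma$ and knowing that arbitrary data $\cP(\rho_0u_0)$ can be split into divergence-free pieces adapted to the $K$-functional; this is routine in $\R^2$ or $\T^2$ but delicate in the bounded-domain case (negative-order spaces are duals of $H^s_0$ and the Leray projector is not obviously bounded there for $s>1/2$), and avoiding exactly this identification is what the paper's duality argument buys.

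The one step that, as written, does not close is the $tu_t$ half of \eqref{eq:Hsdecay2} for fractional $s$. From $u_t=\dot u-v\cdot\nabla u$, H\"older and Ladyzhenskaya \eqref{eq:lad} control $\|t\,v\cdot\nabla u\|_{L^2}$, but they do not control $\|t\,v\cdot\nabla u\|_{\dot H^s}$ for $s\in(0,1)$: you would need a fractional Leibniz/product estimate (and control of $v$ beyond energy-type norms, or extra time weights), which is precisely the kind of machinery the paper avoids. The repair is immediate and is what the paper does: the decay estimates \eqref{eq:decay2} are stated for the \emph{pair} $(u_t,\dot u)$ with $k=0,1$, so you can interpolate them directly to get \eqref{eq:Hsdecay2} with $s'=0$, and then restart at $t/2$ and insert \eqref{eq:Hsdecay1a} exactly as you do for $\dot u$ (your route through \eqref{eq:weighttHs} for $\dot u$ is fine). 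Your endpoint bookkeeping for \eqref{eq:H1decay1}--\eqref{eq:H1decay3} (using \eqref{eq:weight3/2}, \eqref{eq:H1ter}, \eqref{eq:weighttt}, \eqref{eq:H1half5}, \eqref{eq:weight9}) matches the paper's indications, and your worry about the exponentials is harmless: complex interpolation yields the geometric mean $M_0^{1-\theta}M_1^{\theta}$, which is dominated by $e^{\wt C_2^v+\wt C_3^v}$-type factors.
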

\begin{proof}
The previous parts  guarantee  that:
\begin{align}\label{eq:decay1}
t^{k/2}\|\nabla^k u(t)\|_{L^2} &\leq C_{\rho,v} \,\|u_0\|_{L^2} \quad\hbox{for }\ k=0,1,2,\\\label{eq:decay2}
t^{1+k/2}\|\nabla^k(u_t,\dot u)(t)\|_{L^2}   &\leq C_{\rho,v} \,\|u_0\|_{L^2} \quad\hbox{for }\ k=0,1.\end{align}
The key observation for proving  \eqref{eq:Hsdecay1} is that having the density bounded and bounded away from zero ensures
that 
\begin{equation}\label{eq:keyL2}\|\cP(\rho z)\|_{L^2}\simeq \|z\|_{L^2}\ \hbox{ for all }\ z\in L^2_\sigma.\end{equation}
 Indeed, since $\cP$ is a $L^2$ orthogonal projector we may write
$$
\|\cP(\rho z)\|_{L^2} \leq \|\rho z\|_{L^2}\leq \rho^* \|z\|_{L^2}$$
and 
$$\rho_*\|z\|_{L^2}^2\leq \int_\Omega \rho |z|^2\,dx =\int_\Omega  \cP(\rho z)\cdot z\,dx \leq \|\cP(\rho z)\|_{L^2}\|z\|_{L^2}.$$
Inequality \eqref{eq:Hsdecay1} in the case $s'=0$ thus follows from \eqref{eq:decay1} 
with $k=0,2$ and complex interpolation. 
In order to attain  negative values of $s',$ we use again \eqref{eq:keyL2}, then  argue by duality as follows for all $t>0$:
$$\begin{aligned}\|\cP(\rho u)(t)\|_{L^2} &= \underset{\|w\|_{L^2_\sigma}=1}{\sup}\int_{\Omega} (\rho u)(t)\cdot w\,dx\\
&= \underset{\|w\|_{L^2_\sigma}=1}{\sup}\int_{\Omega} \rho_0 u_0\cdot w(0)\,dx\\
&\leq \|\cP(\rho_0 u_0)\|_{\dot H^{-s'}}\ \underset{\|w\|_{L^2_\sigma}=1}{\sup}\|w(0)\|_{\dot H^{s'}},
\end{aligned}
$$
where $w(0)$ stands for the solution at time $0$ of the backward Stokes system \eqref{eq:backward} with no source
term and data $w$ at time $t.$ 
Now, using  the inequality we have just proved (that, obviously, also  holds true for \eqref{eq:backward}), we discover that
$$\|w(0)\|_{\dot H^{s'}}\leq C t^{-s'/2}\|w\|_{L^2},$$
whence: 
\begin{equation}\label{eq:Hsdecay1a} \|\rho(t)u(t)\|_{L^2}\leq C t^{-s'/2} \|\cP(\rho_0 u_0)\|_{\dot H^{-s'}}.\end{equation}
Since Inequality \eqref{eq:decay1} is valid on any interval $[t_0,t]$ (if replacing $u_0$ by $u(t_0)$ and $t$ by $t-t_0,$ of course), 
one can assert that for all $s\in[0,2],$ we have
$$\|u(t)\|_{\dot H^s}\leq Ct^{-\frac s2} \|(\rho u)(t/2)\|_{L^2},$$
which, combined with  
\eqref{eq:Hsdecay1a} (at time $t/2$) completes the proof of \eqref{eq:Hsdecay1} 
for all $0\leq s\leq2$ and $0\leq s'\leq1.$ 
\smallbreak
Next, using \eqref{eq:decay2} 
with $k=0,1$ and complex interpolation yields \eqref{eq:Hsdecay2} for $s'=0$ and all $s\in[0,1].$
Since the inequality also holds true if $u_0$ is replaced with $u(t/2),$ using again \eqref{eq:Hsdecay1a}
yields the desired inequality for all $s'\in[0,1].$
\smallbreak
By the same token, combining the above result  with  the continuity properties resulting from 
Inequalities \eqref{eq:weight3/2}, \eqref{eq:H1ter}, \eqref{eq:H1half5}
and  \eqref{eq:weight9} gives the last three inequalities of the statement. The details are left to the reader.
\end{proof}

\subsubsection{Decay estimates in Lebesgue spaces}
Inequalities \eqref{eq:decay1} and \eqref{eq:decay2} also imply the following result.
\begin{proposition}\label{p:decay2}   
 The following inequalities hold true:
\begin{itemize}
\item  If  $1<p\leq 2\leq q\leq\infty$ then 
\begin{equation}\label{eq:decayLp1} 
\|u(t)\|_{L^q} + \|\sqrt t\, \nabla u(t)\|_{L^q} \leq C_{\rho,v}
t^{\frac1q-\frac1p}\|u_0\|_{L^p}.
\end{equation}
\item  If  $1<p\leq 2\leq q<\infty$ then
\begin{equation}\label{eq:decayLp2} 
\|t(\dot u,u_t,\nabla^2 u,\nabla P)(t)\|_{L^q} \leq 
C_{\rho,v} t^{\frac1q-\frac1p}\|u_0\|_{L^p}.
\end{equation}\end{itemize}
\end{proposition}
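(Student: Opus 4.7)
The approach parallels the proof of Proposition~\ref{p:decay1}: derive the $p=2$ cases by Gagliardo--Nirenberg from the Hilbert bounds \eqref{eq:decay1}--\eqref{eq:decay2}, then reach $p\in(1,2)$ by a duality argument against the backward system \eqref{eq:backward}.

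For \eqref{eq:decayLp1} with $p=2$ and $q\in[2,\infty)$, the 2D Gagliardo--Nirenberg inequalities $\|u\|_{L^q}\lesssim\|u\|_{L^2}^{2/q}\|\nabla u\|_{L^2}^{1-2/q}$ and $\|\nabla u\|_{L^q}\lesssim\|\nabla u\|_{L^2}^{2/q}\|\nabla^2 u\|_{L^2}^{1-2/q}$, inserted into \eqref{eq:decay1}, produce the decay rate $t^{1/q-1/2}$ directly. The case $q=\infty$ for $u$ follows from the sharp 2D embedding $\|u\|_{L^\infty}^2\lesssim\|u\|_{L^2}\|\nabla^2 u\|_{L^2}$. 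The case $q=\infty$ for $\nabla u$ is more delicate since $H^1\not\hookrightarrow L^\infty$ in dimension two: I would fix some $r>2$, apply steady Stokes $L^r$ regularity to \eqref{eq:stokes} to control $\|\nabla^2 u\|_{L^r}$ by $\|\dot u\|_{L^r}$ (itself bounded through \eqref{eq:decay2} and Gagliardo--Nirenberg), and then conclude via the 2D inequality $\|\nabla u\|_{L^\infty}\lesssim\|\nabla u\|_{L^r}^{1-2/r}\|\nabla^2 u\|_{L^r}^{2/r}$, which yields exactly $\|\nabla u(t)\|_{L^\infty}\lesssim t^{-1}\|u_0\|_{L^2}$.

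For $p\in(1,2)$, identity \eqref{eq:uu} gives, for any divergence-free $w_T$ and corresponding $w$ solving \eqref{eq:backward} with no source,
\[
\int_\Omega (\rho u)(T)\cdot w_T\,dx=\int_\Omega \rho_0 u_0\cdot w(0)\,dx\leq \|\rho_0 u_0\|_{L^p}\,\|w(0)\|_{L^{p'}}.
\]
Since \eqref{eq:backward} has the same structure as \eqref{eq:LINS0} up to time reversal, it satisfies the very same $L^2\to L^{p'}$ estimate just established, delivering $\|w(0)\|_{L^{p'}}\leq C T^{1/p'-1/2}\|w_T\|_{L^2}$ in the case $q=2$. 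This yields $\|\cP(\rho u)(T)\|_{L^2}\lesssim T^{1/2-1/p}\|u_0\|_{L^p}$, which thanks to \eqref{eq:keyL2} is equivalent to the sought $L^p\to L^2$ bound on $u$. The full $L^p\to L^q$ estimate then follows by composition: apply this $L^p\to L^2$ bound on $[0,T/2]$ to control $\|u(T/2)\|_{L^2}$ by $\|u_0\|_{L^p}$, then re-use the $L^2\to L^q$ bound of step one on $[T/2,T]$, restarting the PDE at $t=T/2$.

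For \eqref{eq:decayLp2} with $p=2$, Gagliardo--Nirenberg inserted in \eqref{eq:decay2} gives $\|t\dot u(t)\|_{L^q}\lesssim t^{1/q-1/2}\|u_0\|_{L^2}$ for $q\in[2,\infty)$; Stokes $L^q$ regularity applied to \eqref{eq:stokes} transfers this bound to $\nabla^2 u$ and $\nabla P$, and the splitting $u_t=\dot u-v\cdot\nabla u$ combined with H\"older (using the already-controlled norms of $v$ and $\nabla u$) takes care of $u_t$. The extension to $p\in(1,2)$ proceeds exactly as for \eqref{eq:decayLp1}. The main obstacle I anticipate is the $L^\infty$ bound on $\nabla u$, which forces one to leave the purely $L^2$ framework and invoke $L^r$ Stokes theory for some $r>2$. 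A secondary subtlety is that the duality delivers bounds on $\cP(\rho u)$ rather than on $u$ directly, which is why I route the small-$p$ estimate through $L^2$, where the equivalence \eqref{eq:keyL2} is available.
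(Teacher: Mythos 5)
Your proposal is correct, and for the case $p=2$ it coincides with the paper's proof: Gagliardo--Nirenberg applied to \eqref{eq:decay1}--\eqref{eq:decay2}, transfer to $\nabla^2u,\nabla P$ via $L^q$ Stokes regularity for \eqref{eq:stokes}, and the endpoint $q=\infty$ by interpolating through an intermediate Lebesgue exponent (the paper fixes $r=4$ and uses \eqref{eq:GN2} with $z=u$ and $z=\nabla u$; your general $r>2$ with the Agmon-type inequality for $u$ gives the same rates, modulo the usual mean-value normalization in the torus case that the paper handles in a footnote). The genuine difference is the passage to $1<p<2$. The paper does not run a new duality argument there: it reuses the already-established negative-Sobolev decay \eqref{eq:Hsdecay1a}, writing $\|u(t)\|_{L^2}\simeq\|\cP(\rho u)(t)\|_{L^2}\lesssim t^{\frac12-\frac1p}\|\cP(\rho_0u_0)\|_{\dot H^{-(2/p-1)}}$ and then invoking the two-dimensional embedding $L^p\hookrightarrow \dot H^{-1+2/p}$ together with the boundedness of $\cP$ on $L^p$, before restarting at time $t/2$ exactly as you do. You instead perform the duality directly in the Lebesgue scale: pair \eqref{eq:uu} with H\"older and use the time-reversed $L^2\to L^{p'}$ smoothing of the backward system \eqref{eq:backward}. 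This is legitimate --- the backward system has the same structure as \eqref{eq:LINS0} after time reversal (with $v$ replaced by $-v(T-\cdot)$, so only energy-like norms of $v$ enter), and the paper itself exploits this principle when proving the $\dot H^{-s}$ estimates --- and your routing through $L^2$, where \eqref{eq:keyL2} converts the bound on $\cP(\rho u)$ into one on $u$, is exactly the right precaution. What the paper's route buys is economy (no fresh duality, everything reduced to results already proved); what yours buys is that it stays entirely within the Lebesgue scale and avoids the Sobolev embedding $L^p\hookrightarrow\dot H^{-1+2/p}$, at the modest cost of having to note explicitly that the forward decay estimate \eqref{eq:decayLp1} with $p=2$, $q=p'$ applies verbatim to \eqref{eq:backward}.
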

\begin{proof}
Combining Gagliardo-Nirenberg inequality  \eqref{eq:GN} and \eqref{eq:decay1} with $k=0,1,2,$ 
it is easy to get: 
\begin{equation}\label{eq:decay5}
\|u(t)\|_{L^q} +\|\sqrt t\nabla u(t)\|_{L^q} \leq C_{\rho,v} t^{\frac1q-\frac12}\|u_0\|_{L^2},\quad 2\leq q<\infty\end{equation}
while \eqref{eq:decay2} ensures that
\begin{equation}\label{eq:decay5a}\|u_t(t),\dot u(t)\|_{L^q} \leq C_{\rho,v} t^{\frac1q-\frac32}\|u_0\|_{L^2}.\end{equation}
Since $(u,\nabla P)$ satisfies the Stokes system \eqref{edu1}, 
Inequality \eqref{eq:stokesLp} gives 
\begin{equation}\label{eq:stokes0}\|\nabla^2 u(t)\|_{L^q}+\|\nabla P(t)\|_{L^q}\leq C_{\rho,v} t^{\frac1q-\frac32}\|u_0\|_{L^2}, \quad 2\leq q<\infty.\end{equation}
 Remember that\footnote{In the torus case, this inequality holds under the assumption $\int_{\T^2} az\,dx=0$ 
 for some nonnegative function $a$ with mean value $1.$ The idea of the proof is similar to that of \eqref{eq:GNT}.}
 \begin{equation}\label{eq:GN2} \|z\|_{L^\infty}\leq C \|z\|_{L^4}^{1/2}\|\nabla z\|_{L^4}^{1/2}.\end{equation}
Taking first $z=u$ and using \eqref{eq:decay5} with $p=4,$ 
then $z=\nabla u$ and using  \eqref{eq:stokes0} with $p=4$ allows 
 to reach the index $q=\infty$ in \eqref{eq:decay5}. 
 
 In \eqref{eq:decay5} and \eqref{eq:stokes0}, the term $\|u_0\|_{L^2}$
 may be replaced with $\|u(t/2)\|_{L^2}.$
 Consequently, using \eqref{eq:rho0},    \eqref{eq:Hsdecay1a}, embedding $L^p\hookrightarrow \dot H^{-1+2/p}$
 for all $1<p\leq 2$ and the fact that $\cP:L^p\to L^p$ ensures that  
 $$\begin{aligned}
 \|u(t)\|_{L^2}\simeq \|\cP(\rho u)(t)\|_{L^2}&\leq C_{\rho,v}t^{\frac12-\frac1p}\|\cP(\rho_0u_0)\|_{\dot H^{1-\frac2p}}\\
& \leq C_{\rho,v}t^{\frac12-\frac1p}\|\cP(\rho_0 u_0)\|_{L^p} \leq C_{\rho,v} t^{\frac12-\frac1p}\|u_0\|_{L^p}\end{aligned}
 $$
 which, plugged into \eqref{eq:decay5} and \eqref{eq:stokesLp} completes
 the proof of  \eqref{eq:decayLp1} and of \eqref{eq:decayLp2} for all
 admissible values of $p$ and $q.$  
\end{proof}

\subsubsection{Decay estimates for $L^2$-in-time norms}

Putting together \eqref{eq:L2}, \eqref{eq:weight3}, \eqref{eq:weighttt} and \eqref{eq:weight3/2},  we see  that
\begin{multline}\label{eq:decay3b}
\int_0^t\bigl(\|\nabla u\|_{L^2}^2+\|\sqrt\tau(\nabla^2u,\nabla P)\|_{L^2}^2
+\|\sqrt\tau(\dot u,u_\tau)\|_{L^2}^2\\+\|\tau(\nabla u_\tau,\nabla\dot u)\|_{L^2}^2
+\|\tau^{3/2}\ddot u\|_{L^2}^2 + \|\tau^{3/2}(\nabla^2\dot u,\nabla \dot P)\|_{L^2}^2\bigr)d\tau
\leq C_{\rho,v} \|u_0\|_{L^2}^2.
\end{multline}
This will enable us to prove the following family of decay estimates:
\begin{proposition}\label{p:decay3} 
The following inequalities hold true:
 \begin{align}
 \label{eq:decay8}
\|\tau^{\frac12-\frac1q}\nabla u\|_{L_t^2(L^q)} &\leq C_{\rho,v} \|u_0\|_{L^2}\quad\hbox{for all }\ 2\leq q\leq\infty,\\
 \label{eq:decay9}\|\tau^{1-\frac1q}(\dot u,u_t)\|_{L_t^2(L^q)} &\leq C_{\rho,v} \|u_0\|_{L^2}\quad\hbox{for all }\ 2\leq q\leq \infty,\\
\label{eq:decay10}
\|\tau^{1-\frac1q}(\nabla^2u,\nabla P)\|_{L_t^2(L^q)} &\leq C_{\rho,v} \|u_0\|_{L^2}\quad\hbox{for all }\ 2\leq q<\infty,\\\label{eq:decay11}
  \|\tau^{\frac32-\frac1q}\nabla\dot u\|_{L^2_t(L^q)} &\leq C_{\rho,v} \|u_0\|_{L^2}\quad\hbox{for all }\ 2\leq q<\infty.\end{align}
\end{proposition}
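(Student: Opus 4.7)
The overall plan is to obtain each of the four estimates by interpolating, via the two-dimensional Gagliardo-Nirenberg inequality \eqref{eq:GN}, between two consecutive Sobolev levels of the relevant quantity, both of which live in $L^2_t(L^2)$ with the time weights furnished by \eqref{eq:decay3b}, and then closing with H\"older's inequality in time. The endpoint $q=\infty$ in \eqref{eq:decay8} and \eqref{eq:decay9} will then be reached separately via the refined 2D embedding \eqref{eq:GN2} applied to the $q=4$ case that has already been proved. Throughout, all norms of $v$ that appear are energy-type in the sense of the notation $C_{\rho,v}$.

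I will first handle the range $2\le q<\infty$. For \eqref{eq:decay8}, applying \eqref{eq:GN} to $z=\nabla u$ yields $\|\nabla u\|_{L^q}\lesssim\|\nabla u\|_{L^2}^{2/q}\|\nabla^2u\|_{L^2}^{1-2/q}$, and inserting the weight gives
$$\tau^{1-2/q}\|\nabla u\|_{L^q}^2\lesssim \|\nabla u\|_{L^2}^{4/q}\bigl(\sqrt\tau\|\nabla^2u\|_{L^2}\bigr)^{2-4/q},$$
since $\tfrac12(1-2/q)=\tfrac12-\tfrac1q$. Integrating in $\tau$ and applying H\"older in time with exponents $q/2$ and $q/(q-2)$ bounds the right-hand side by $\|\nabla u\|_{L^2_t(L^2)}^{4/q}\|\sqrt\tau\nabla^2u\|_{L^2_t(L^2)}^{2-4/q}$, which is controlled by \eqref{eq:decay3b}. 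The same recipe, with $z=\dot u$ interpolating the weights $\sqrt\tau$ and $\tau$, proves \eqref{eq:decay9} for $\dot u$; and with $z=\nabla\dot u$ interpolating the weights $\tau$ and $\tau^{3/2}$, proves \eqref{eq:decay11}. For \eqref{eq:decay10}, I apply the $L^q$ Stokes regularity \eqref{eq:stokesLp} to \eqref{edu1}, obtaining $\|\nabla^2u\|_{L^q}+\|\nabla P\|_{L^q}\lesssim\rho^*\|\dot u\|_{L^q}$, whence \eqref{eq:decay10} is a direct consequence of \eqref{eq:decay9}. The $u_t$-part of \eqref{eq:decay9} is deduced from the identity $u_t=\dot u-v\cdot\nabla u$ together with $\|v\cdot\nabla u\|_{L^q}\le\|v\|_{L^{2q}}\|\nabla u\|_{L^{2q}}$, all such $v$-norms being absorbed into $C_{\rho,v}$.

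To reach $q=\infty$ in \eqref{eq:decay8}, I use \eqref{eq:GN2} in the form $\|\nabla u\|_{L^\infty}^2\lesssim\|\nabla u\|_{L^4}\|\nabla^2u\|_{L^4}$ to write
$$\tau\|\nabla u\|_{L^\infty}^2\lesssim\bigl(\tau^{1/4}\|\nabla u\|_{L^4}\bigr)\bigl(\tau^{3/4}\|\nabla^2u\|_{L^4}\bigr),$$
and integration gives the bound via Cauchy-Schwarz from \eqref{eq:decay8} at $q=4$ (weight $\tau^{1/4}$) and \eqref{eq:decay10} at $q=4$ (weight $\tau^{3/4}$), both just proved. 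Analogously for \eqref{eq:decay9}: $\tau^2\|\dot u\|_{L^\infty}^2\lesssim(\tau^{3/4}\|\dot u\|_{L^4})(\tau^{5/4}\|\nabla\dot u\|_{L^4})$, which is controlled via \eqref{eq:decay9} and \eqref{eq:decay11} at $q=4$.

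The main obstacle I expect is the $q=\infty$ case for $u_t$ in \eqref{eq:decay9}: the splitting $u_t=\dot u-v\cdot\nabla u$ leaves the term $\tau\|v\cdot\nabla u\|_{L^\infty}$ to be controlled in $L^2_t$, and since $v\in L^\infty$ is not guaranteed uniformly in time, I must exploit the decay $\sqrt\tau\|v\|_{L^\infty}\in L^\infty_t$ that follows from applying the already-established $L^\infty$ endpoint of \eqref{eq:decayLp1} to $v$. One then writes $\tau\|v\cdot\nabla u\|_{L^\infty}\le\bigl(\sqrt\tau\|v\|_{L^\infty}\bigr)\bigl(\sqrt\tau\|\nabla u\|_{L^\infty}\bigr)$ and absorbs the first factor as an energy-type quantity of $v$ inside $C_{\rho,v}$, so that the $L^2_t$ norm is finally reduced to \eqref{eq:decay8} at $q=\infty$, which has just been proved.
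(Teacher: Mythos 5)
Your argument is correct and follows essentially the same route as the paper: Gagliardo--Nirenberg interpolation of the weighted $L^2_t(L^2)$ bounds from \eqref{eq:decay3b} for the range $2\le q<\infty$, Stokes regularity \eqref{eq:stokesLp} applied to \eqref{edu1} for \eqref{eq:decay10}, the inequality \eqref{eq:GN2} combined with the $q=4$ cases for the endpoints $q=\infty$, and the splitting $u_t=\dot u-v\cdot\nabla u$ with an energy-like weighted norm of $v$ for the $u_t$ part. The only cosmetic difference is that for finite $q$ you bound $v\cdot\nabla u$ via $\|v\|_{L^{2q}}\|\nabla u\|_{L^{2q}}$ while the paper uses the single factorization $\|\sqrt\tau\, v\|_{L^\infty_t(L^\infty)}\|\tau^{\frac12-\frac1q}\nabla u\|_{L^2_t(L^q)}$ valid for all $q$ at once; both rest on the same energy-like decay of $v$.
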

\begin{proof}
Except for  $q=\infty,$ Inequality \eqref{eq:decay8}  follows from Gagliardo-Nirenberg inequality \eqref{eq:GN} 
and  the fact that
$$ \|\nabla u\|_{L^2_t(L^2)} +\|\sqrt \tau\nabla^2u\|_{L^2_t(L^2)}\leq C_{\rho,v}\|u_0\|_{L^2}.$$
Similarly, except for the case $q=\infty,$ Inequality \eqref{eq:decay9} for $\dot u$ stems from \eqref{eq:GN} and 
$$ \|\tau\nabla \dot u\|_{L^2_t(L^2)} +\|\sqrt \tau\dot u\|_{L^2_t(L^2)}\leq C_{\rho,v} \|u_0\|_{L^2}.$$
Now, since $(u,P)$ satisfies \eqref{edu1},  the regularity properties of the Stokes system pointed out in
\eqref{eq:stokesLp}, and \eqref{eq:decay9} guarantee  that 
 $$
\|\tau^{1-\frac1q}(\nabla^2u,\nabla P)\|_{L_t^2(L^q)} \leq C_{\rho,v} \|u_0\|_{L^2}\quad\hbox{for all }\ 2\leq q<\infty.$$
Putting together this latter inequality  and \eqref{eq:decay8} with $q=4,$ and 
remembering  \eqref{eq:GN2} yields \eqref{eq:decay8}  for $q=\infty.$
\smallbreak
Note that \eqref{eq:decay3b} also implies that
$$ \|\tau^{3/2}\nabla^2 \dot u\|_{L^2_t(L^2)} +\| \tau\nabla \dot u\|_{L^2_t(L^2)}\leq C_{\rho,v} \|u_0\|_{L^2},$$
and thus \eqref{eq:decay11}, by \eqref{eq:GN}.  Using it with $q=4$ as well as \eqref{eq:decay9} (also 
with $q=4$) and \eqref{eq:GN2} gives \eqref{eq:decay9} for $\dot u$ and  $q=\infty.$
\smallbreak To  prove that  $u_t$ satisfies \eqref{eq:decay9}, it suffices to check  that 
$$\|\tau^{1-\frac1q}\,v\cdot\nabla u\|_{L_t^2(L^q)}\leq C_{\rho,v} \|u_0\|_{L^2}\quad\hbox{for all }\ 2\leq q\leq\infty.$$
Now, by H\"older inequality, we have
$$\|\tau^{1-\frac1q}\,v\cdot\nabla u\|_{L_t^2(L^q)}\leq \|\tau^{\frac12}v\|_{L_t^\infty(L^\infty)}
\|\tau^{\frac12-\frac1{q}}\nabla u\|_{L_t^2(L^{q})}. $$
The term with $v$ is energy-like  (see \eqref{eq:decayLp1}),
which  completes the proof.
\end{proof}


\subsection{The Lipschitz control  and other properties needed for stability}

In the present subsection, we point out some additional properties of the velocity field that are valid 
in the case where $u_0$ is in $\wt B^0_{\rho_0,1}.$ The most important one is the Lipschitz control. 
We shall also prove  that the  regularity $\wt B^0_{\rho_0,1}$  is preserved by the flow, and that other norms 
that will be needed in the proof of uniqueness and stability are finite. 

These results follow from the Sobolev estimates we proved in the previous part and on the dynamic interpolation argument
presented for the heat equation in Section~\ref{s:results}. 

Now, fix some $u_0$ in $\wt B^0_{\rho_0,1}$ and a sequence $(u_{0,j})_{j\in\Z}$   of $L^2_\sigma$ such that
 \begin{multline}\label{def:abstractbesov}
 u_0=\sum_{j\in\Z} u_{0,j}\with \cP(\rho_0u_{0,j})\in \dot H^{-1/2},\ u_{0,j}\in \dot H^{1/2}\quad\hbox{for all }\ j\in\Z,
 \\\andf
\sum_{j\in\Z}\bigl(2^{-j/2} \|u_{0,j}\|_{\dot H^{1/2}} + 2^{j/2} \|\cP(\rho_0 u_{0,j})\|_{\dot H^{-1/2}}\bigr) \leq2\|u_0\|_{\wt B^0_{\rho_0,1}}.
\qquad\qquad
 \end{multline}
Then, for each $j\in\Z,$ we solve the linear system  
\begin{equation}\label{eq:LINSj}
\left\{\begin{aligned}
 &\rho \d_tu_{j}+\rho v\cdot \nabla u_j- \Delta u_j+\nabla P_j=0, \\
&\div u_j=0,\\
&u_j|_{t=0}=u_{0,j}.
\end{aligned}\right.
\end{equation}
{}From \eqref{def:abstractbesov} and the uniqueness properties of System \eqref{eq:LINS0} in  the energy space, we deduce that 
\begin{equation}\label{eq:decompouj}u=\sum_{j\in\Z} u_j.\end{equation}


\subsubsection{The Lipschitz bound}

Recall  the following  Gagliardo-Nirenberg inequality:
  \begin{equation}\label{eq:GN3}
\|\nabla z\|_{L^\infty}\leq C\|z\|_{L^4}^{1/4}\|\nabla^2 z\|_{L^4}^{3/4}.
\end{equation}
Combined with  the elliptic estimates for the Stokes system  and Sobolev embedding, this implies that for all $t>0$ and $j\in\Z,$
$$\|\nabla u_j(t)\|_{L^\infty} \leq Ct^{-3/4}\|u_j(t)\|_{L^4}^{1/4}\|t\dot u_j(t)\|_{L^4}^{3/4}
\leq  Ct^{-3/4}\|u_j(t)\|_{\dot H^{1/2}}^{1/4}\|t\dot u_j(t)\|_{\dot H^{1/2}}^{3/4}.$$
Hence, taking advantage of \eqref{eq:Hs} and of \eqref{eq:weighttHs} gives
$$\|\nabla u_j(t)\|_{L^\infty} \leq C_{\rho,v} t^{-3/4} \|u_{0,j}\|_{\dot H^{1/2}}.$$
 Since we also have 
$$\|\nabla u_j(t)\|_{L^\infty} \leq C_{\rho,v} t^{-3/4} \|u_j(t/2)\|_{\dot H^{1/2}},$$
we conclude in light of \eqref{eq:Hsdecay1}  that
$$\|\nabla u_j(t)\|_{L^\infty} \leq C_{\rho,v} t^{-5/4} \|\cP(\rho_0u_{0,j})\|_{\dot H^{-1/2}}.$$
Hence arguing as  in Section \ref{s:results}, we conclude that
   \begin{equation}\label{eq:lipfinalter}
 \int_0^\infty\|\nabla u\|_{L^\infty}\,dt\leq C_{\rho,v}\, \|u_0\|_{\wt B^0_{\rho_0,1}}.\end{equation}
\begin{remark}  Recall  the  following  more accurate interpolation inequality:
\begin{equation}\label{eq:GN3bis}
\|\nabla z\|_{\dot B^{1/2}_{4,1}}\leq C\|z\|_{L^4}^{1/2}\|\nabla^2 z\|_{L^4}^{3/4}.
\end{equation}
Repeating the above dynamic interpolation procedure thus actually gives
$$\int_0^\infty\|\nabla u\|_{\dot B^{1/2}_{4,1}}\,dt\leq C_{\rho,v}\, \|u_0\|_{\wt B^0_{\rho_0,1}}.$$
Since  $\dot B^{1/2}_{4,1}\hookrightarrow \cC_b,$ this ensures that the flow of the velocity field is uniformly $C^1$
with respect to the space variable. 
\end{remark}

\subsubsection{Propagating the initial regularity}

 Owing to  \eqref{eq:Hs} and to \eqref{eq:H-s} with $s=1/2,$ we  have  for all $j\in\Z$ and $t\geq0,$
 $$\|u_j(t)\|_{\dot H^{1/2}}\leq C_{\rho,v} \|u_{0,j}\|_{\dot H^{1/2}}\andf 
   \|\cP(\rho u_j)(t)\|_{\dot H^{-1/2}}\leq C_{\rho,v}\|\cP(\rho_0u_{0,j})\|_{\dot H^{-1/2}}.$$
 Hence, multiplying the first (resp. second)  inequality by $2^{-j/2}$ (resp. $2^{j/2}$), then summing up on $j\in\Z$ yields 
 $$ \|u(t)\|_{\wt B^0_{\rho(t),1}} \leq C_{\rho,v} \|u_0\|_{\wt B^0_{\rho_0,1}}.$$

 \subsubsection{Additional  bounds for the pressure and the time derivative of the velocity}
 
 In addition to the Lipschitz bound on velocity,  our proof of uniqueness will require that  
  $\sqrt t \dot u$ and $\sqrt t \nabla P$  are
 in   $L^{4/3}(\R_+;L^4),$  and we will  also need the property that $\dot u$ and $\sqrt tD\dot u$
 are in $L^1(\R_+;L^2)$ for proving the stability of the flow map. 
  
 \smallbreak
 Again, in light of the decomposition \eqref{eq:decompouj} and of the triangle inequality, in order to prove that  
 $\sqrt t \dot u$ is in   $L^{4/3}(\R_+;L^4),$ it suffices to  estimate $t\dot u_j$ for all $j\in\Z.$ 
Now, owing to Sobolev embedding and the following inequalities  (that stem from  \eqref{eq:weighttHs} and \eqref{eq:Hsdecay2} with $s=s'=1/2$):
$$\|\dot u_j(t)\|_{\dot H^{1/2}}\leq C_{\rho,v} t^{-1}\|u_{0,j}\|_{\dot H^{1/2}}\andf
\|\dot u_j(t)\|_{\dot H^{1/2}}\leq C_{\rho,v}t^{-3/2}\|\cP(\rho_0u_{0,j})\|_{\dot H^{-1/2}},$$
we may write for all $A_j>0,$
$$\begin{aligned}
\|\sqrt t\dot u_j&\|_{L^{4/3}(\R_+;L^4)}^{4/3}\leq C\int_0^\infty t^{2/3}\|\dot u_j\|_{\dot H^{1/2}}^{4/3}\,dt\\
&\leq C_{\rho,v}\biggl(\int_0^{A_j} \!\! t^{2/3}(t^{-1}\|u_{0,j}\|_{\dot H^{1/2}})^{4/3}dt+
\int_{A_j}^\infty \!\! t^{2/3}(t^{-3/2}\|\cP(\rho_0u_{0,j})\|_{\dot H^{-1/2}})^{4/3}dt\biggr)\\
&\leq C_{\rho,v}\bigl(A_j^{1/3}\|u_{0,j}\|_{\dot H^{1/2}}^{4/3}+ A_j^{-1/3}\|\cP(\rho_0 u_{0,j})\|_{\dot H^{-1/2}}^{4/3}\bigr),
\end{aligned}$$
which gives, if taking $A_j=2^{-2j}$ and using  \eqref{eq:stokesLp}, 
\begin{equation}\label{eq:Puniq}
\|(\sqrt t\dot u, \sqrt t\nabla^2u,\sqrt t\nabla P)\| _{L^{4/3}(\R_+;L^4)} \leq C_{\rho,v}\,  \|u_0\|_{\wt B^{0}_{\rho_0,1}}.
\end{equation}
Similarly, in order to bound $\dot u$ in $L^1(\R_+;L^2),$ it suffices to get appropriate bounds in terms of the data
for $\dot u_j$ in $L^1(\R_+;L^2),$ for all $j\in\Z.$ 
The following inequalities (that stem from \eqref{eq:weighttt} and \eqref{eq:H1half5}): 
$$
\|\dot u_j(t)\|_{L^2} \leq   C_{\rho,v}\,t^{-1}\|u_{0,j}\|_{L^2}\andf  \|\dot u_j(t)\|_{L^2}\leq  C_{\rho,v}\,t^{-1/2}\|\nabla u_{0,j}\|_{L^2}$$
and complex interpolation give
$$ \|\dot u_j(t)\|_{L^2}\leq  C_{\rho,v}\,t^{-3/4}\|u_{0,j}\|_{\dot H^{1/2}}.$$
Furthermore, combining with  \eqref{eq:Hsdecay2},  we discover that for all $j\in\Z$:
$$\|\dot u_j(t)\|_{L^2} \leq  C_{\rho,v}\, t^{-5/4}\|\cP(\rho_{0} u_{0,j})\|_{\dot H^{-1/2}}.$$
Hence  we have  for all $j\in\Z$ and $A_j>0,$
$$\begin{aligned}
\int_0^\infty\|\dot u_j(t)\|_{L^2}\,dt &\leq \int_0^{A_j} \|\dot u_j(t)\|_{L^2}\,dt +\int_{A_j}^\infty\|\dot u_j(t)\|_{L^2}\,dt \\
&\leq C_{\rho,v}\biggl(\int_0^{A_j}\! \bigl(t^{-3/4} \|u_{0,j}\|_{\dot H^{1/2}}\bigr)dt
+\int_{A_j}^\infty\!\bigl(t^{-5/4} \|\cP(\rho_0u_{0,j})\|_{\dot H^{-1/2}}\bigr)dt\biggr)\\
&\leq C_{\rho,v}\biggl(A_j^{1/4} \|u_{0,j}\|_{\dot H^{1/2}}+A_j^{-1/4} \|\cP(\rho_0u_{0,j})\|_{\dot H^{-1/2}}\biggr)\cdotp\end{aligned}$$
Taking $A_j=2^{-2j},$  summing up on $j$ then using  the regularity properties of the Stokes system 
thus  gives 
\begin{equation}\label{eq:tuL1L1} \|\nabla^2u,\nabla P,\dot u\|_{L^1(\R_+;L^2)} \leq  C_{\rho,v}\, \|u_0\|_{\wt B^0_{\rho_0,1}}.\end{equation}
In the same way, one can prove that 
\begin{equation}\label{eq:tuL1L1b} \|\sqrt tD\dot u\|_{L^1(\R_+;L^2)} \leq  C_{\rho,v}\,
 \|u_0\|_{\wt B^0_{\rho_0,1}}.\end{equation}
 It suffices to use that, as a consequence of \eqref{eq:Hsdecay2} and \eqref{eq:H1decay1},  we have
 $$\begin{aligned} \|\sqrt t\nabla\dot u_j(t)\|_{L^2} &\leq  C_{\rho,v}t^{-3/4}\|u_{0,j}\|_{\dot H^{1/2}}\\
 \andf  \|\sqrt t\nabla\dot u_j(t)\|_{L^2}&\leq  
 C_{\rho,v}t^{-5/4}\|\cP(\rho_0 u_{0,j})\|_{\dot H^{-1/2}}.\end{aligned}$$


\section{A global well-posedness result for large data}   \label{s:proof}

This section is devoted to the proof of Theorem \ref{thm:2} and of stability estimates. 

\subsection{The proof of existence}

Consider data $(\rho_0,u_0)$ satisfying the hypotheses of Theorem \ref{thm:2}. 
Since  the space $\wt B^0_{\rho_0,1}$ is embedded in $L^2_\sigma,$  Theorem \ref{thm:1} provides us with  
a global weak solution $(\rho,u,\nabla P)$ satisfying the properties therein, and 
it is only a matter of checking that this solution has the additional properties that are listed in Theorem \ref{thm:2}. 
To do so, we fix some decomposition $\sum_ju_{0,j}$ of $u_0$ given by Definition \ref{def:espacepourri} and look, for all $j\in\Z,$
at the  solution $u_j$ to the linear system \eqref{eq:LINS0} with density $\rho,$ transport field $u$ and initial data $u_{0,j}.$
Since each  $u_{0,j}$ is in $L^2_\sigma\cap \dot H^{1/2}$ and $\cP(\rho_0 u_{0,j})\in \dot H^{-1/2},$ standard techniques yield a unique 
global solution  $(u_j,\nabla P_j)$  that satisfies for all $t\geq0,$
\begin{align}\label{eq:uj1}
\frac12\|\sqrt{\rho(t)}\,u_j(t)\|_{L^2}^2+\int_0^t&\|\nabla u_j\|_{L^2}^2\,d\tau= \frac12\|\sqrt{\rho_0}\, u_{0,j}\|_{L^2}^2,\\
\|\cP(\rho u_j)(t)\|_{\dot H^{-1/2}} &\leq C(\rho_*,\rho^*,\|u_0\|_{L^2})\|\cP(\rho_0 u_{0,j})\|_{\dot H^{-1/2}},\label{eq:uj2}\\
\|u_j(t)\|_{\dot H^{1/2}} &\leq C(\rho_*,\rho^*,\|u_0\|_{L^2})\|u_{0,j}\|_{\dot H^{1/2}}.\label{eq:uj3}
\end{align}
Remembering \eqref{eq:L2zj}, this ensures that the $L^2$-valued series 
$\sum_j u_j$ converges normally on $\R_+.$ Its sum  $\wt u$ thus
 also belongs to the energy space. Furthermore, as for each $j\in\Z,$ we have 
 $u_j\in \cC(\R_+;L^2)$ (observe that $t^{3/4} u_t^j$ is in $L^\infty(\R_+;L^2)$
 owing to \eqref{eq:H1decay2}), we deduce that $\wt u\in \cC(\R_+;L^2).$
 Next,   if we denote $u^n:=\sum_{|j|\leq n} u_j,$ then we see that for all $n\in\N,$
 $$\d_t (\rho(u^n-\wt u)) + \div(\rho u\otimes(u^n-\wt u)) -\Delta(u^n-\wt u)+\nabla(P^n-\wt P)=0,\qquad \div(u^n-\wt u)=0,$$
 which implies 
 $$\frac12\|\sqrt{\rho(t)}\,(u^n-\wt u)(t)\|_{L^2}^2+\int_0^t\|\nabla(u^n-\wt u)\|_{L^2}^2\,d\tau
 = \frac12\|\sqrt{\rho_0}\, (u^n(0)-u(0))\|_{L^2}^2.$$
 As the right-hand side tends to $0$ for $n$ going to $0,$ the velocity field $\wt u$
 satisfies the energy balance \eqref{eq:L2INS}, 
 and it is also easy to conclude that, like $u,$
it satisfies  \eqref{eq:LINS0} with density $\rho,$ transport field $u$ and initial data $u_{0}.$
In particular,
$$\d_t (\rho(u-\wt u)) + \div(\rho u\otimes(u-\wt u)) -\Delta(u-\wt u)+\nabla(P-\wt P)=0,\qquad \div(u-\wt u)=0.$$
As  $(u-\wt u)(0)=0,$ and the two solutions are in the energy space, they must coincide. 
Now,  Inequalities \eqref{eq:uj2} and \eqref{eq:uj3} ensure that
one can propagate the regularity $\wt B^0_{\rho_0,1},$ getting \eqref{eq:utB}.  
Likewise, justifying that $u$ satisfies \eqref{eq:Lip},  that $(\dot u, \sqrt t D\dot u, D^2u,\nabla P)\in L^1(\R_+;L^2)$
and that $\sqrt t\dot u\in  L^{4/3}(\R_+;L^4)$
may be achieved by following the arguments of the previous section. 
The fundamental point is that all the bounds that are needed for the $u_j$'s in the process only 
depend on  $\rho_*,$ $\rho^*,$ $\|u_0\|_{L^2},$ $\|\cP(\rho_0u_{0,j})\|_{\dot H^{-1/2}}$ and $\|u_{0,j}\|_{\dot H^{1/2}}.$


\subsection{The proof of uniqueness}

Let  $(\rho^1,u^1,\nabla P^1)$ and $(\rho^2,u^2,\nabla P^2)$ be two solutions
 fulfilling the properties listed in Theorem \ref{thm:2}, and corresponding to data $(\rho_{0}^1,u_{0}^1)$
 and  $(\rho_{0}^2,u_{0}^2),$ respectively.
  As in \cite{DM1}, in order to prove that  $(\rho^1,u^1,\nabla P^1)\equiv(\rho^2,u^2,\nabla P^2)$
  in the case where the two initial data coincide,  
    we shall compare the  solutions at the level of their own Lagrangian coordinates. 
    To do so,  we consider for $i=1,2,$ the flow $X^i$ of $u^i$ that is defined by 
    the following (integrated) ODE:
\begin{equation}\label{eq:ODE} X^i(t,y)=y+\int_0^tu^i(\tau,X^i(\tau,y))\,d\tau.\end{equation}
Since  $\nabla u^i$ is in $L^1(\R_+;L^\infty)$  and $\sqrt tu^i$ is in $L^\infty(0,T\times\Omega)$ (see \eqref{eq:decayLp1}
with $p=2$ and $q=\infty$), there exists  
 a unique continuous flow $X^i$ on $(0,T)\times\Omega,$ that is Lipschitz with respect to the space variable.  
 \smallbreak
 In Lagrangian coordinates the density  is  equal to the initial density. 
As for the velocity and the pressure, defined by 
 \begin{equation}\label{def:lag}
 Q^i(t,y)=P^i(t,X^i(t,y))\andf  v^i(t,y)=u^i(t,X^i(t,y)),
\end{equation} they satisfy
\begin{equation}\label{u1}
 \left\{\begin{array}{l}
  \rho_0^i v_{t}^i -\div_{\!v^i}\nabla_{v^i}v^i+ \nabla_{v^i}Q^i =0,\\[1ex] \div_{\!v^i}v^i=0,  
 \end{array}\right.
\end{equation}
where $\nabla_{v^i}:=(A^{i})^\top \nabla_y$ and 
$\div_{\!v^i}:= \div_{\!y}(A^{i}\cdot)= (A^{i})^\top:\nabla_y$ with $A^{i}:=(DX^{i})^{-1}.$  
The fact that  $\nabla u^i$ is in $L^1(\R_+;L^\infty)$ and the other properties of regularity ensure
that  (INS) and \eqref{u1} (with time independent density) are equivalent.
\smallbreak
Observe that, due to \eqref{eq:ODE} and to the definition of $v^i,$ we have
\begin{equation}\label{eq:DX}
DX^i(t,y)={\rm Id}+\int_0^t Dv^i(\tau,y)\,d\tau.
\end{equation}
Hence, since $\det DX^i\equiv1$ (owing to $\div v^i=0$),  we have for $i=1,2$:
\begin{equation}\label{eq:Ai}
A^i(t)= {\rm Id} +\begin{pmatrix} \int_0^t\d_2v^{i,2}\,d\tau&-\int_0^t\d_2v^{i,1}\,d\tau\\[1ex]
-\int_0^t\d_1v^{i,2}\,d\tau&\int_0^t\d_1v^{i,1}\,d\tau\end{pmatrix}\cdotp
\end{equation}
Hence $\dA:=A^2-A^1$ depends linearly on $\nabla \dv$  (with $\dv:=v^2-v^1$) as follows:
\begin{equation}\label{def:dA}
\dA(t)= \begin{pmatrix} \int_0^t\d_2\dv^2\,d\tau&-\int_0^t\d_2\dv^1\,d\tau\\[1ex]
-\int_0^t\d_1\dv^2\,d\tau&\int_0^t\d_1\dv^1\,d\tau\end{pmatrix}\cdotp
\end{equation}
Now, setting $\Delta_{v^i}:=\div_{\!v^i}\nabla_{v^i}$   and $\dQ:=Q^2-Q^1,$ we 
discover that $(\dv,\dQ)$ satisfies: 
\begin{equation}\label{eq:dv}\left\{\begin{array}{l}
\rho_{0}^1\dv_t-\Delta_{v^1}\dv +\nabla_{v^1}\dQ = \bigl(\Delta_{v^2}-\Delta_{v^1}\bigr)v^2
-(\nabla_{v^2}-\nabla_{v^1})Q^2-\dr_0\,v_{t}^2,\\[1ex]
\div_{\!v^1}\dv= (\div_{\!v^1}-\div_{\!v^2})v^2 =-\div (\dA v^2).\end{array}\right.\end{equation}
In order to prove uniqueness in the case where the initial data are the same and, 
more generally, stability estimates with respect to the initial data, using  the basic
 energy method  consisting in taking the $L^2$ scalar product of   \eqref{eq:dv} with $\dv$ is 
 not appropriate since one cannot eliminate the pressure term (there is no reason why we should have $\div_{\!v^1}\dv= 0$).  To overcome the difficulty,
 we proceed as in \cite{DM1}, solving first  the equation
 \begin{equation}\label{eq:div}
 \div_{\!v^1} w=-\div (\dA v^2) =- \dA^\top:\nabla v^2 \with   \dA:=A^2-A^1,\end{equation} 
 Then, we  look at the  system for $z:=\dv-w,$ namely:
 \begin{equation}\label{eq:w}
 \left\{\!\begin{array}{l}
\rho_0^1z_t -\Delta_{v^1}z +\nabla_{v^1}\dQ = \bigl(\Delta_{v^2}\!-\!\Delta_{v^1}\bigr) v^2\\
\hspace{4cm}-(\nabla_{v^2}\!-\!\nabla_{v^1})Q^2-\rho_0^1 w_t+ \Delta_{v^1}w -\dr_0\,v_{t}^2,\\[1ex]
\div_{\!v^1}z= 0,\end{array}\right.
 \end{equation}
 supplemented with $z|_{t=0}=\dv_0.$
 \medbreak
 Solving \eqref{eq:div} relies on  the following lemma:
 \begin{lemma}\label{lem:2} Assume that $\Omega$ is a $C^2$ bounded domain, the torus or the whole space. Fix $T>0$ and denote
 $$ E_T:=\Bigl\{ w\in\cC([0,T];L^2),\; \nabla w\in L^2(0,T\times\Omega), \!\!\quad w|_{\d\Omega}=0\!\andf\! w_t\in L^{4/3}(0,T\times\Omega)\Bigr\}\cdotp$$
 There exists a constant $c$ depending only on $\Omega$ such that whenever the divergence free vector-field $u$ satisfies
 \begin{equation}\label{eq:smallDv1}
 \|\nabla u\|_{L^2(0,T\times\Omega)} +\|\nabla u\|_{L^1(0,T;L^\infty)} \leq c,
 \end{equation}
 then, for all  vector-field $k\in\cC([0,T];L^2)$ such that $\div k\in L^2(0,T\times\Omega)$ and $k_t\in L^{4/3}(0,T\times\Omega),$
  there exists a  vector-field $w$ in the space $E_T$ satisfying 
  $$  \div(A w)=\div k,$$
  where $A$ is defined from $u$ as in \eqref{eq:Ai}, and the inequalities: 
  \begin{align}\label{eq:div1}
  \|w(t)\|_{L^2}&\leq C\|k(t)\|_{L^2}\quad\hbox{for all }\ t\in[0,T],\\
  \label{eq:div2}
  \|\nabla w\|_{L^2_T(L^2)} &\leq C\|\div k\|_{L^2_T(L^2)},\\
  \label{eq:div3} \|w_t\|_{L^{4/3}_T(L^{4/3})} &\leq C\bigl(\|k_t\|_{L^{4/3}_T(L^{4/3})}
  +\|\nabla u\|_{L^2_T(L^2)}\|w\|_{L^4_T(L^4)}\bigr)\cdotp\end{align}
   \end{lemma}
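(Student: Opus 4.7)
The plan is to exploit the smallness of $B := A - \Id$ guaranteed by \eqref{eq:smallDv1} and construct $w$ by a Banach fixed point, viewing $\div(Aw)=\div k$ as a perturbation of the classical divergence equation $\div w = \div k.$ From the explicit formula \eqref{eq:Ai}, each entry of $B(t)$ is a time integral of an entry of $\nabla u$, so
\[
\|B\|_{L^\infty_T(L^\infty)} \leq \|\nabla u\|_{L^1_T(L^\infty)} \leq c, \qquad \|B_t\|_{L^p_T(L^p)}\leq C\|\nabla u\|_{L^p_T(L^p)}\ \hbox{for every}\ p\in[1,\infty];
\]
in particular $\|B_t\|_{L^2_T(L^2)}\leq c$ and $B$ is a small perturbation of the identity in $L^\infty$.

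Rewriting the equation as $\div w = \div(k-Bw)$, let $\mathcal{R}$ denote a bounded right inverse to $\div$ adapted to the geometry (the classical Bogovskii operator on a $C^2$ bounded domain, with range in $W^{1,p}_0$; a Fourier multiplier on $\T^2$ modulo the mean-value obstruction; and $\nabla(-\Delta)^{-1}$ on $\R^2$). Then $\mathcal{R}$ extends to a bounded map $\mathcal{R}\circ\div:L^p\to L^p$ for $1<p<\infty$. Setting $\Phi(w):=\mathcal{R}(\div(k-Bw))$ and noting $\Phi(w^1)-\Phi(w^2)=-\mathcal{R}(\div(B(w^1-w^2)))$, the smallness of $\|B\|_{L^\infty_T(L^\infty)}$ renders $\Phi$ a strict contraction of $E_T$, yielding a unique fixed point. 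The boundary and regularity conditions in $E_T$ are compatible with the construction, because $\mathcal{R}$ sends $L^2$ data into $H^1_0$ pointwise in time, and the time-derivative regularity is handled by differentiating the fixed-point identity.

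To obtain the quantitative estimates, I would insert the identity $w=\mathcal{R}(\div(k-Bw))$ into the appropriate mapping properties of $\mathcal{R}$. For \eqref{eq:div1}, the pointwise-in-time bound $\|\mathcal{R}(\div f)\|_{L^2}\leq C\|f\|_{L^2}$ gives $\|w(t)\|_{L^2}\leq C(\|k(t)\|_{L^2}+\|B(t)\|_{L^\infty}\|w(t)\|_{L^2})$, and the second term is absorbed by smallness of $B$. For \eqref{eq:div2}, splitting $w = w_1 + w_2$ with $\div w_1=\div k$ solved by Bogovskii in $W^{1,2}_0$ (which controls $\nabla w_1$ in $L^2$ by $\|\div k\|_{L^2}$) and $w_2$ satisfying $\div w_2 = -\div(Bw)$ with $Bw\in L^2$ (controlled by $\|B\|_{L^\infty}\|w\|_{L^2}$ and hence by $\|\div k\|_{L^2}$ via \eqref{eq:div1}), gives the desired control of $\nabla w$ after absorbing using the smallness of $B$. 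The main obstacle is \eqref{eq:div3}: differentiating the fixed-point identity in time yields
\[
w_t = \mathcal{R}(\div(k_t - B_t w - B w_t)),
\]
and the critical term $\mathcal{R}(\div(B_t w))$ is estimated via Hölder using $B_t\sim\nabla u$ as
\[
\|\mathcal{R}(\div(B_t w))\|_{L^{4/3}_T(L^{4/3})}\leq C\|B_t w\|_{L^{4/3}_T(L^{4/3})}\leq C\|\nabla u\|_{L^2_T(L^2)}\|w\|_{L^4_T(L^4)},
\]
which is precisely the cross term on the right-hand side of \eqref{eq:div3}; the remaining term $\mathcal{R}(\div(Bw_t))$ is absorbed by the smallness $\|B\|_{L^\infty}\leq c$. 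A subtle but routine point to verify in each geometry separately is the compatibility of $\mathcal{R}$ with the boundary condition $w|_{\partial\Omega}=0$ and with the mean-value conditions on $\T^2$.
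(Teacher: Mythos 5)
Your overall strategy (recasting $\div(Aw)=\div k$ as a fixed point for a Bogovskii-type right inverse of the divergence, with contraction coming from the smallness \eqref{eq:smallDv1}) is the same as the paper's, but there is a genuine gap at the point where the gradient is estimated. The operator $\mathcal R\circ\div$ (the paper's $\cB$) gains one derivative only relative to the \emph{divergence} of its argument: $\|\nabla\,\mathcal R(\div f)\|_{L^2}\lesssim\|\div f\|_{L^2}$, not $\lesssim\|f\|_{L^2}$. Hence your bound of $\nabla w_2$ by $\|Bw\|_{L^2}\le\|B\|_{L^\infty}\|w\|_{L^2}$ is not available, and the subsequent step ``hence by $\|\div k\|_{L^2}$ via \eqref{eq:div1}'' is also a non sequitur, since \eqref{eq:div1} controls $\|w(t)\|_{L^2}$ by $\|k(t)\|_{L^2}$, a quantity which is not allowed on the right-hand side of \eqref{eq:div2}. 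The same difficulty already appears upstream: to show that $\Phi$ maps $E_T$ into itself and is contractive in the $\nabla$-part of the $E_T$ norm you must estimate $\div(B\,\dw)$ in $L^2$; writing $\div(B\,\dw)=(\div B^j)\dw_j+ B^\top\!:\!\nabla\dw$, the first term involves $\nabla B\sim\int_0^t\nabla^2u\,d\tau$, which is not controlled at all by \eqref{eq:smallDv1}. Smallness of $\|B\|_{L^\infty}$ alone is therefore not enough.

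The missing ingredient is the structural (Piola-type) identity used in the paper: because of the explicit cofactor form \eqref{eq:Ai} of $A$ (equivalently, because $\div u=0$), the columns of $A-\Id$ are divergence free, so that
\begin{equation*}
\div\bigl((\Id-A)w\bigr)=(\Id-A^\top):\nabla w ,
\end{equation*}
with no derivative falling on $A$. With this identity, $\|\div(Bw)\|_{L^2}\le\|B\|_{L^\infty}\|\nabla w\|_{L^2}\le c\,\|\nabla w\|_{L^2}$, which yields both the contraction in the $\nabla$-component of $E_T$ and the absorption needed for \eqref{eq:div2} (the term $\cB k$ being handled by $\|\nabla\cB k\|_{L^2}\lesssim\|\div k\|_{L^2}$). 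Your treatment of the time derivative, via $(Bw)_t=B_tw+Bw_t$ with $\|B_tw\|_{L^{4/3}}\lesssim\|\nabla u\|_{L^2}\|w\|_{L^4}$ and absorption of $Bw_t$, does match the paper and gives \eqref{eq:div3}; once you insert the identity above, the rest of your argument goes through essentially as in the paper.
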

 \begin{proof}  With the notation of  Lemma \ref{lem:1} in Appendix, we introduce the map 
 $$\Phi:w\longmapsto z:=\cB\bigl( k+({\rm Id}-A)w\bigr)\cdotp$$ 
 It is only a matter of proving that $\Phi$ admits a fixed point. 
 That $\Phi$ maps $E_T$ to $E_T$ follows from Lemma \ref{lem:1} and easy modifications of the computations below. 
  Hence, as $E_T$ is a Banach space, it suffices to show that the linear map $\Phi$ is strictly contractive. 
  To do so, take two elements $w^1$ and $w^2$ of $E_T.$ 
  Then, we have 
  $$  \Phi(w^2)-\Phi(w^1)= \cB\bigl(({\rm Id}-A)\dw\bigr)\with  \dw:=w^2-w^1.$$
  Remembering \eqref{eq:Ai} and that $\cB:L^2\to L^2,$  we thus have
  \begin{equation}\label{eq:Phi1}\|\Phi(w^2)-\Phi(w^1)\|_{L^\infty_T(L^2)} \leq  C\|\nabla u\|_{L^1_T(L^\infty)}\|\dw\|_{L^\infty_T(L^2)}.
  \end{equation}
 Next, using again \eqref{eq:Ai} and the fact that 
 $$ \div\bigl(({\rm Id}-A)\dw\bigr)=\bigl({\rm Id}-A^\top\bigr):\nabla\dw,$$
 we readily get 
  \begin{equation}\label{eq:Phi2}\|\nabla(\Phi(w^2)-\Phi(w^1))\|_{L^2_T(L^2)} \leq  C\|\nabla u\|_{L^1_T(L^\infty)}\|\nabla\dw\|_{L^2_T(L^2)}.
  \end{equation}
  Finally, using that 
  $$  \bigl(({\rm Id}-A)\dw\bigr)_t=({\rm Id}-A)\dw_t - A_{t}\dw$$
 yields for a.e. $t\in[0,T],$ 
     \begin{align}\label{eq:Phi3}\|\bigl(\Phi(w^2)-\Phi(w^1)\bigr)_t(t)\|_{L^{4/3}} 
     &\lesssim\|({\rm Id}-A(t))\dw_t(t) \|_{L^{4/3}} +\|A_{t}(t)\dw(t)\|_{L^{4/3}}\nonumber\\
 &\lesssim\|\nabla u\|_{L_t^1(L^\infty)}\|\dw_t(t) \|_{L^{4/3}}\! +\!
  \|\nabla u(t)\|_{L^2}\|\dw(t)\|_{L^4}\nonumber\\
   \lesssim\|\nabla& u\|_{L_t^1(L^\infty)}\|\dw_t(t) \|_{L^{4/3}}
     +\|\nabla u(t)\|_{L^2}\|\dw(t)\|_{L^2}^{1/2}\|\nabla\dw(t)\|_{L^2}^{1/2}.
  \end{align}
 Putting \eqref{eq:Phi1}, \eqref{eq:Phi2} and \eqref{eq:Phi3} together, we conclude that 
 $$
 \|(\Phi(w^2)-\Phi(w^1)\|_{E_T}\leq C\bigl(\|\nabla u\|_{L^1_T(L^\infty)}+\|\nabla u\|_{L^2_T(L^2)}\bigr)\|\dw\|_{E_T}.
 $$
 Hence, if \eqref{eq:smallDv1} is satisfied with a suitable small $c>0$ then $\Phi$ is contractive, 
 which ensures the existence of $w$ in $E_T$ satisfying the desired equation.
 Finally, using the fact that we thus have
 $w =\cB k+\cB( (\Id-A)w),$ and that 
 $$ \div((\Id-A) w)=(\Id-A^\top):\nabla w\andf  ((\Id-A)w)_t=(\Id-A) w_t- A_t w,$$
 mimicking the above calculations gives \eqref{eq:div1}, \eqref{eq:div2} and \eqref{eq:div3}.
 \end{proof}
 In what follows, we assume that $T$ has been chosen so that \eqref{eq:smallDv1} is satisfied for $u^1$ and $u^2,$
  and we define $w$ on $[0,T]\times\Omega$  according to the above lemma with $k=-\dA\, v^2.$ 
  We shall use repeatedly that, owing to \eqref{def:dA} and Cauchy-Schwarz inequality,  we have
 \begin{equation}\label{eq:dA}
\max\Bigl( \| t^{-1/2} \dA\|_{L^\infty_T(L^2)}, \|(\dA)_t\|_{L^2(0,T\times\Omega)} \Bigr)\leq  \|\nabla\dv\|_{L^2(0,T\times\Omega)}.
 \end{equation}
 Hence, thanks to  \eqref{eq:div1},  we have  for all $t\in[0,T],$
\begin{equation}\label{eq:wL2}
\|w(t)\|_{L^2} \leq C\|\sqrt tv^2(t)\|_{L^\infty} \|\nabla\dv\|_{L^2(0,t\times \Omega)}.
\end{equation}
 Next, as 
 $$ (\dA v^2)_t=\dA_t v^2 + \dA\,v_{t}^2,$$
 Inequality \eqref{eq:div3} (before time integration)   and \eqref{def:dA} guarantee that
  \begin{equation}\label{eq:wt}
\|w_t\|_{L^{4/3}} \leq C\bigl(\|\nabla v^1\|_{L^2}\|w\|_{L^4} +\|\nabla\dv\|_{L^2}\|v^2\|_{L^4}
+\|\dA\|_{L^2}\|v_{t}^2\|_{L^4}\bigr)\cdotp
\end{equation}
 Finally,  using  $\div(\dA v^2)=\dA^\top:\nabla v^2,$  Inequalities \eqref{eq:div2} and  \eqref{eq:dA} yields
   \begin{equation}\label{eq:Dw}
 \|Dw(t)\|_{L^2}\leq  C\|\nabla\dv\|_{L^2_t(L^2)}\|\sqrt t\nabla v^2\|_{L^\infty_t(L^\infty)}.
 \end{equation}
    Now, taking the $L^2(0,t\times\Omega)$ scalar product of the first equation of \eqref{eq:w} 
  with $z$ and integrating by parts in some terms yields
 \begin{equation}\label{eq:z} \frac12\|\sqrt{\rho_0^1} z\|_{L^\infty(0,t;L^2)}^2+\int_0^t\|\nabla_{v^1}z\|_{L^2}^2\,d\tau 
  =\frac12 \|\sqrt{\rho_0^1}\,\du_0\|_{L^2}^2+ \sum_{j=1}^5 I_j(t)
 \end{equation}
 with 
 $$ \begin{aligned}
 I_1(t)&:=-\int_0^t\!\!\int_{\Omega} \bigl(\dA (A^2)^\top+A^1\dA^\top\bigr)\nabla v^2:\nabla z\,dx\,d\tau,\\
 I_2(t)&:=-\int_0^t\!\!\int_{\Omega}\dA^\top\nabla Q^2\cdot z\,dx\,d\tau,\\
 I_3(t)&:=-\int_0^t\!\!\int_{\Omega} \rho_0^1 w_\tau\cdot z\,dx\,d\tau,\\
 I_4(t)&:= -\int_0^t\!\!\int_{\Omega} (A^{1})^\top\nabla w :  (A^{1})^\top\nabla z\,dx\,d\tau,\\
  I_5(t)&:= -\int_0^t\!\!\int_{\Omega}  \dr_0\,v_{t}^2\cdotp z\,dx\,d\tau.\end{aligned} $$ 
 We shall often use  that, due to \eqref{eq:Ai}, we have
  \begin{equation}\label{eq:equiv}
  \|\nabla z\|_{L^2(0,T\times\Omega)}\simeq  \|\nabla_{v^1}z\|_{L^2(0,T\times\Omega)}.\end{equation}
  {}From this, we easily get
  $$I_1(t)\leq C\int_0^t\|\tau^{-\frac12}\dA(\tau)\|_{L^2} \|\sqrt\tau\nabla v^2(\tau)\|_{L^\infty}
  \|\nabla_{v^1}z(\tau)\|_{L^2}\,d\tau. $$
  Hence, using \eqref{eq:dA} 
  and Young inequality, 
  \begin{equation}\label{eq:II1}
  I_1\leq C\|\sqrt \tau \nabla v^2\|_{L^2_t(L^\infty)} ^2\|\nabla \dv\|_{L^2(0,t\times\Omega)}^2
 +\frac18\int_0^t \|\nabla_{v^1} z\|_{L^2}^2\,d\tau.
 \end{equation}
  Next, 
  by \eqref{eq:dA}, \eqref{eq:equiv}, H\"older inequality and \eqref{eq:lad}, we have
 $$\begin{aligned}
 I_2&\leq C  \int_0^t   \|\tau^{-1/2} \dA\|_{L^2} \|\sqrt \tau\nabla Q^2\|_{L^4}  \|z\|_{L^2}^{1/2}
 \|\nabla z\|_{L^2}^{1/2}\,d\tau,\\
  &\leq \frac18\int_0^t \|\nabla_{v^1} z\|_{L^2}^2\,d\tau+
  C\|\tau^{-1/2} \dA\|_{L^\infty_t(L^2)}^{4/3}   \|z\|_{L^\infty_t(L^2)}^{2/3}
    \int_0^t   \|\sqrt \tau\nabla Q^2\|_{L^4}^{4/3}\,d\tau.\end{aligned}$$
   Hence,  in light of \eqref{eq:dA}, of  Young inequality and of \eqref{eq:rho00}, we have 
  \begin{multline}\label{eq:I2}
I_2\leq  \frac18\int_0^t \bigl(\|\nabla_{v^1} z\|_{L^2}^2+ \frac14\|\nabla \dv\|_{L^2}^2\bigr)d\tau
+C  \|\sqrt{\rho_0^1}z\|_{L^\infty_t(L^2)}^{2}   \|\sqrt \tau\nabla Q^2\|_{L^{4/3}_t(L^4)}^{4}.
\end{multline}
In order to bound $I_3,$ we start with the inequality
$$
I_3\leq \rho^*\int_0^t \|w_\tau\|_{L^{4/3}}\|z\|_{L^4}\,d\tau.$$
Taking advantage of \eqref{eq:wt} to bound $w_\tau,$ and of  Gagliardo-Nirenberg and Young inequalities yields
$$\begin{aligned}
I_3&\lesssim \int_0^t \|z\|^{1/2}_{L^2}\|\nabla z\|^{1/2}_{L^2}
\Bigl(\|\nabla v^1\|_{L^2}\|w\|_{L^4}+\|v^2\|_{L^4}\|\nabla\dv\|_{L^2}
+\|\dA\|_{L^2}\|v_{\tau}^2\|_{L^4}\Bigr)d\tau\\ &\leq
\frac18\int_0^t\|\nabla_{v^1}z\|_{L^2}^2\,d\tau+\frac1{32}\int_0^t\|\nabla\dv\|_{L^2}^2\,d\tau
+C\int_0^t\|v^2\|_{L^4}^{4}\|z\|_{L^2}^{2}\,d\tau+I_{3,1}+I_{3,2}\\
&\with I_{31}:=C\int_0^t\|z\|_{L^2}^{2/3}\|\nabla v^1\|_{L^2}^{4/3}\|w\|_{L^2}^{2/3}\|\nabla w\|_{L^2}^{2/3}d\tau\\
&\andf  I_{32}:=C\int_0^t\|z\|_{L^2}^{2/3}\|\dA\|_{L^2}^{4/3}\|v_{\tau}^2\|_{L^4}^{4/3}d\tau.
\end{aligned}$$
Just using  \eqref{eq:dA} yields  
$$I_{32}\leq \|\nabla\dv \|_{L^2_t(L^2)}^{4/3}
 \|z\|_{L^\infty_t(L^2)}^{2/3}\|\sqrt\tau v_{\tau}^2\|_{L^{4/3}_t(L^4)}^{4/3}.$$
 In order to bound $I_{31},$ one has to use  \eqref{eq:wL2} and \eqref{eq:Dw}, which yields
$$\begin{aligned}
I_{31}&\leq C\int_0^t\|z\|_{L^2}^{2/3}\|\nabla v^1\|_{L^2}^{4/3}\|\sqrt \tau\,v^2\|_{L^\infty}^{2/3}\|\nabla\dv\|_{L^2_\tau(L^2)}^{2/3}
\|\tau^{-1/2}\dA(\tau)\|_{L^2}^{2/3}\|\sqrt\tau\nabla v^2\|_{L^\infty}^{2/3}\,d\tau\\
&\leq C\|\nabla\dv\|_{L^2_t(L^2)}^{4/3}\|z\|_{L^\infty_t(L^2)}^{2/3}
\int_0^t\|\sqrt \tau\,v^2\|_{L^\infty}^{2/3}\|\nabla v^1\|_{L^2}^{4/3}\|\sqrt\tau\nabla v^2\|_{L^\infty}^{2/3}\,d\tau.
\end{aligned}$$
This enables us to get the following bound for $I_3$:
   \begin{multline}\label{eq:II3}
   I_3(t) \leq   \frac18 \|\nabla_{v^1}z\|_{L^2_t(L^2)}^2 +\frac1{16}\|\nabla\dv\|_{L^2_t(L^2)}^2  +C\biggl(\|v^2\|_{L^4_t(L^4)}^4\\
  +\biggl(\int_0^t\|\sqrt \tau\,v^2\|_{L^\infty}^{2/3}\|\nabla v^1\|_{L^2}^{4/3}\|\sqrt\tau\nabla v^2\|_{L^\infty}^{2/3}\,d\tau\biggr)^3 
    +\|\sqrt\tau v_{\tau}^2\|_{L^{4/3}_t(L^4)}^{4}\biggr) \|\sqrt{\rho_0^1}z\|_{L^\infty_t(L^2)}^2.   
\end{multline}
Next,  thanks to \eqref{eq:Dw},  \eqref{eq:dA}, and Cauchy-Schwarz  and Young inequality,
\begin{align} I_4&\leq C\int_0^t \|\nabla w\|_{L^2}\|\nabla_{v^1}z\|_{L^2}\,d\tau\nonumber\\
&\leq C\int_0^t \|\tau^{-1/2}\dA\|_{L^2}\|\sqrt\tau\nabla v^2\|_{L^\infty} \|\nabla_{v^1}z\|_{L^2}\,d\tau,\nonumber\\\label{eq:I4}
&\leq  \frac18\int_0^t \|\nabla_{v^1} z\|_{L^2}^2\,d\tau+C\|\sqrt \tau \nabla v^2\|_{L^2(0,t;L^\infty)} ^2\|\nabla \dv\|_{L^2(0,t\times\Omega)}^2.\end{align}
Finally, it is obvious that 
   \begin{equation}\label{eq:I5}
I_5(t)\leq  \|\dr_0/\sqrt{\rho_0^1}\|_{L^\infty} \|\sqrt{\rho_0^1}z\|_{L^\infty_t(L^2)} \|v_{t}^2\|_{L^1_t(L^2)}.\end{equation}
So plugging \eqref{eq:II1}, \eqref{eq:I2}, \eqref{eq:II3}, \eqref{eq:I4}  and \eqref{eq:I5} in \eqref{eq:z} and taking $t=T$ yields 
$$\displaylines{\|\sqrt{\rho_0^1}\, z\|_{L^\infty_T(L^2)}^2+\|\nabla_{v^1}z\|_{L^2_T(L^2)}^2
\leq \|\sqrt{\rho_0^1}\,\du_0\|_{L^2}^2+ A(T) \|\sqrt{\rho_0^1}\,z\|_{L^\infty_T(L^2)}^{2}\hfill\cr\hfill
+ \biggl(\frac1{8}+C  \|\sqrt t \nabla v^2\|_{L^2_T(L^\infty)}^2\biggr)\|\nabla\dv\|_{L^2_T(L^2)}^2 
 +2\|\dr_0/\sqrt{\rho_0^1}\|_{L^\infty}^2  \|v_{t}^2\|_{L^1_T(L^2)}^2\cr
\with A(T):= C\biggl(\|v^2\|_{L^4_T(L^4)}^4+\|\sqrt t v_{t}^2\|_{L^{4/3}_T(L^4)}^4 + \|\sqrt \tau\nabla Q^2\|_{L^{4/3}_T(L^4)}^{4}
\hfill\cr\hfill
+\biggl(\int_0^t\|\sqrt \tau\,v^2\|_{L^\infty}^{2/3}\|\nabla v^1\|_{L^2}^{4/3}\|\sqrt\tau\nabla v^2\|_{L^\infty}^{2/3}\,d\tau\biggr)^3\biggr)\cdotp}$$
The regularity properties of the constructed solutions guarantee that $A(\infty)$ is finite, 
and Lebesgue dominated convergence theorem thus ensures that  if $T$ is small enough, then
\begin{equation}\label{eq:T}\max\bigl(8C\|\sqrt t \nabla v^2\|_{L^2_T(L^\infty)}^2, 2A(T)\bigr) \leq1.\end{equation}
Under this hypothesis, the  above inequality becomes
\begin{multline}\label{eq:zz}
\frac12\|\sqrt{\rho_0^1}\, z\|_{L^\infty_T(L^2)}^2+\|\nabla_{v^1}z\|_{L^2_T(L^2)}^2
\\\leq   \|\sqrt{\rho_0^1}\,\du_0\|_{L^2}^2+  \frac1{4}\|\nabla \dv\|_{L^2_T(L^2)}^2 + C\|\dr_0\|_{L^\infty}^2  \|v_{t}^2\|_{L^1_T(L^2)}^2.
\end{multline}
Since $\nabla\dv =\nabla z+\nabla w,$ we may write
 owing to \eqref{eq:dA}, \eqref{eq:Dw} and \eqref{eq:equiv}, 
 $$\begin{aligned}
 \|\nabla \dv\|_{L^2_T(L^2)}^2&\leq 2\|\nabla z\|_{L^2_T(L^2)}^2
+ 2\|\nabla w\|_{L^2_T(L^2)}^2\\
&\leq \frac52\|\nabla_{v^1} z\|_{L^2_T(L^2)}^2s
+ C  \|\sqrt t \nabla v^2\|_{L^2_T(L^\infty)}^2
 \|\nabla\dv\|_{L^2_T(L^2)}^2.\end{aligned}$$
 Hence, under assumption \eqref{eq:T} (up to a change of $C$ if needed), we have
  \begin{equation}\label{eq:nabladv}
 \|\nabla \dv\|_{L^2(0,T\times\Omega)}^2 \leq 3\|\nabla_{v^1} z\|_{L^2(0,T\times\Omega)}^2.\end{equation}
Plugging this inequality in \eqref{eq:zz} gives
\begin{equation}\label{eq:stab0}\frac12\|\sqrt{\rho_0^1}\, z\|^2_{L^\infty_T(L^2)}+\frac14\|\nabla_{v^1}z\|^2_{L^2_T(L^2)}
\leq C\Bigl(\|\sqrt{\rho_0^1}\,\du_0\|_{L^2}^2+\|\dr_0\|_{L^\infty}^2 \|v_{t}^2\|_{L^1_T(L^2)}^2\Bigr)\cdotp\end{equation}
In the case where the two solutions correspond to the same initial data,  this ensures
that $z\equiv0$ on $[0,T].$ Then, remembering \eqref{eq:nabladv} and \eqref{eq:wL2}, 
one can conclude to uniqueness on $[0,T],$ then on $\R_+$ by standard bootstrap.


\subsection{Continuity of the flow map}

Here we consider the case where the two solutions considered in the previous paragraph correspond to possibly  different data. 
As a first, we have to observe that  \eqref{eq:nabladv} and \eqref{eq:stab0} together imply that if 
\begin{equation}\label{eq:sqrtv2}
\|\sqrt tv^2\|_{L^\infty(\R_+\times\Omega)} \leq K,
\end{equation}
then, in light of Inequalities \eqref{eq:wL2}, \eqref{eq:nabladv} and \eqref{eq:stab0}, there exists some constant $c>0$ 
such that if $\wt A(T_0)\leq c,$ then we have
\begin{equation}\label{eq:stab1}
\|\sqrt{\rho_0^1}\, \dv\|_{L^\infty_{T_0}(L^2)}+\|\nabla_{v^1}\dv\|_{L^2_{T_0}(L^2)}
\leq C(1+K)\Bigl(\|\sqrt{\rho_0^1}\,\du_0\|_{L^2}+ \|\dr_0\|_{L^\infty}\Bigr)
\end{equation}
where we have  denoted for all $T\in[0,\infty]$:
$$\displaylines{\wt A(T):= \|v^2\|_{L^4_T(L^4)}^{4}+\|\sqrt t(v_{t}^2,\nabla Q^2)\|_{L^{4/3}_T(L^4)}^{4/3}
\hfill\cr\hfill
+(1+K)\bigl(\|\nabla v^1\|_{L^2_T(L^2)}^{2}+\|\sqrt\tau\nabla v^2\|_{L^2_T(L^\infty)}^{2}\bigr) +  \|v_{t}^2\|_{L^1_T(L^2)}.}$$
Now, if we consider data that belong to a bounded subset of $\wt B^0_{\rho_0,1}$ then $K$ in \eqref{eq:sqrtv2} 
and $\wt A(\infty)$ can be uniformly bounded. 
By iterating the procedure that led to \eqref{eq:stab1}, this allows to get in the end 
\begin{equation}\label{eq:stabv}\|\sqrt{\rho_{0}^1}\, \dv\|_{L^\infty_T(L^2)}+\|\nabla_{v^1}\dv\|_{L^2_T(L^2)}
\leq Ce^{C\wt A(\infty)}\Bigl(\|\sqrt{\rho_0^1}\,\du_0\|_{L^2}+ \|\dr_0\|_{L^\infty}\Bigr)\cdotp\end{equation}
Then, reverting to the  Eulerian coordinates gives the following stability statement:
\begin{theorem}\label{thm:3} Consider two solutions $(\rho^1,u^1,P^1)$ and $(\rho^2,u^2,P^2)$ 
corresponding to initial data $(\rho_0^1,u_0^1)$ and $(\rho_0^2,u_0^2)$ given by Theorem \ref{thm:2}. 
Assume that $$0<\rho_*\leq \rho_0^1,\rho_0^2\leq \rho^*\andf 
\max\bigl(\|u_0^1\|_{\wt B^0_{\rho_0^1,1}},\|u_0^2\|_{\wt B^0_{\rho_0^2,1}}\bigr)\leq M.$$
Then we have:
\begin{equation}\label{eq:stabu}
\|\sqrt{\rho_{0}^1}\, \du\|_{L^\infty_T(L^2)}+\|\nabla\du\|_{L^2_T(L^2)}\leq C_{\rho_*,\rho^*,M}\Bigl(\|\sqrt{\rho_0^1}\,\du_0\|_{L^2}+ \|\dr_0\|_{L^\infty}\Bigr),
\end{equation}
and, for all $p\in[2,\infty),$ 
\begin{equation}\label{eq:stabrho}
\|\dr(t)\|_{\dot W^{-1,p}}  \leq C_{p,\rho_*,\rho^*,M}
\Bigl(\|\dr_0\|_{\dot W^{-1,p}} + t^{\frac12+\frac1p}  \bigl( \|\sqrt{\rho_0^1}\,\du_0\|_{L^2}+ \|\dr_0\|_{L^\infty}\bigr)\Bigr)\cdotp
\end{equation}
\end{theorem}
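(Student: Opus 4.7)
The proof rests on two pieces. First, the Lagrangian stability estimate \eqref{eq:stabv} is essentially derived in the argument preceding the theorem: on any interval $[0,T_0]$ where the smallness condition \eqref{eq:T} holds (equivalently $\wt A(T_0)\leq c$), the computations leading from \eqref{eq:z} to \eqref{eq:stab0}, coupled with \eqref{eq:nabladv}, yield \eqref{eq:stab1} with $T=T_0$. Since all the ingredients of $\wt A$ are controlled by $\rho_*,\rho^*$ and $M$ through the regularity of Theorem \ref{thm:2}, we have $\wt A(\infty)<\infty$. Iterating the one-step estimate over a finite partition of $\R_+$ into intervals of $\wt A$-increment at most $c$ upgrades this to the global bound \eqref{eq:stabv} with the exponential constant $e^{C\wt A(\infty)}$. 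It then remains to (i) pass from \eqref{eq:stabv} to the Eulerian velocity estimate \eqref{eq:stabu}, and (ii) deduce the density estimate \eqref{eq:stabrho}.

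For (i), I exploit that $x=X^1(t,y)$ is measure-preserving (since $\det DX^1\equiv 1$). The identity
\begin{equation*}
\du(t,X^1(t,y))=\dv(t,y)+\bigl(u^2(t,X^1(t,y))-u^2(t,X^2(t,y))\bigr),
\end{equation*}
combined with $\d_t\dX=\dv$, hence $\dX(t,\cdot)=\int_0^t\dv(\tau,\cdot)\,d\tau$, and the mean-value inequality, yields
\begin{equation*}
\|\du(t)\|_{L^2}\leq \|\dv(t)\|_{L^2}+\|\nabla u^2(t)\|_{L^\infty}\!\int_0^t\!\|\dv(\tau)\|_{L^2}\,d\tau,
\end{equation*}
which is dominated by $C\|\dv\|_{L^\infty_T(L^2)}$ since $\nabla u^2\in L^1(\R_+;L^\infty)$ by Theorem \ref{thm:2}. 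The $\dot H^1$-in-space estimate proceeds by differentiating the same identity: one gets $\nabla_x\du\circ X^1=(A^1)^\top\nabla_y\dv$ plus remainder terms linear in $\dA$, $\nabla u^2$ and $\dX$, which are controlled thanks to \eqref{eq:dA} and the time-weighted bounds on $\nabla v^2$ from Section \ref{s:lip}. This gives $\|\nabla\du\|_{L^2_T(L^2)}\lesssim\|\nabla\dv\|_{L^2_T(L^2)}$, and \eqref{eq:stabv} delivers \eqref{eq:stabu}.

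For (ii), subtracting the two mass equations and using $\div\du=0$ gives $\d_t\dr+\div(u^2\dr)=-\div(\rho^1\du)$. I test against $\psi$ solving the backward transport $\d_\tau\psi+u^2\cdot\nabla\psi=0$ with $\psi(t)=\phi\in\dot W^{1,p'}$. Since $\nabla u^2\in L^1(\R_+;L^\infty)$ by \eqref{eq:lipfinalter}, standard transport estimates yield $\|\nabla\psi(\tau)\|_{L^{p'}}\leq e^{C\|\nabla u^2\|_{L^1_t(L^\infty)}}\|\nabla\phi\|_{L^{p'}}$, and the pairing produces
\begin{equation*}
\int_\Omega \dr(t)\,\phi\,dx=\int_\Omega\dr_0\,\psi(0)\,dx+\int_0^t\!\!\int_\Omega\rho^1\du\cdot\nabla\psi\,dx\,d\tau,
\end{equation*}
whence $\|\dr(t)\|_{\dot W^{-1,p}}\lesssim \|\dr_0\|_{\dot W^{-1,p}}+\rho^*\int_0^t\|\du(\tau)\|_{L^p}\,d\tau$. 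The 2D Gagliardo--Nirenberg inequality $\|\du\|_{L^p}\lesssim\|\du\|_{L^2}^{2/p}\|\nabla\du\|_{L^2}^{1-2/p}$ and Hölder in time give $\int_0^t\|\du\|_{L^p}\,d\tau\lesssim t^{\frac12+\frac1p}\|\du\|_{L^\infty_t(L^2)}^{2/p}\|\nabla\du\|_{L^2_t(L^2)}^{1-2/p}$, and plugging \eqref{eq:stabu} finishes the proof.

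I expect the principal technical hurdle to lie in step (i), the $\dot H^1$ transfer from Lagrangian to Eulerian coordinates. The composition formulas linking $\nabla_x\du$ to $\nabla_y\dv$ generate several remainder terms involving $\dA=A^2-A^1$ and its time derivative, and it is only through the precise interplay between \eqref{eq:dA} and the time-weighted $\nabla v^2$ bounds derived in Section \ref{s:lip} that they can be absorbed. Nothing is automatic, because $\nabla u^2$ has only critical integrability in time; this is also why the one-step estimate \eqref{eq:stab1} must be iterated rather than obtained globally in one shot.
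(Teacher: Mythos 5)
Your overall architecture (take \eqref{eq:stabv} as granted, transfer it to Eulerian coordinates, then a transport estimate for $\dr$) matches the paper, and your step (ii) is essentially the paper's argument (the paper runs the $\dot W^{-1,p}$ transport estimate directly rather than by duality, with the same H\"older/Gagliardo--Nirenberg bookkeeping giving the factor $t^{\frac12+\frac1p}$). The genuine gap is in step (i), precisely at the point you flag but then dispose of too quickly. When you differentiate the identity $\du\circ X^1=\dv+u^2\circ X^1-u^2\circ X^2$ in $y$, the remainder is \emph{not} ``linear in $\dA$, $\nabla u^2$ and $\dX$'': besides the harmless piece $(D\dX)^\top(\nabla u^2)\circ X^2$ (handled by \eqref{eq:dA} together with $\sqrt t\,\nabla u^2\in L^2(\R_+;L^\infty)$), you are left with $(DX^1)^\top\bigl[(\nabla u^2)\circ X^1-(\nabla u^2)\circ X^2\bigr]$, a difference of a \emph{gradient} composed with the two flows. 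Controlling it requires second derivatives of the velocity: in the paper this is done via an interpolating flow $X^s$ and H\"older, producing a bound by $\|t^{3/4}\nabla^2u(t)\|_{L^4}\,\|\du\|_{L^4_t(L^4)}$. Two consequences follow that your plan misses. First, the needed bound $t^{3/4}\nabla^2u\in L^2_t(L^4)$ is \emph{not} a consequence of the $L^2$-level time-weighted estimates (those only give $\|t^{3/4}\nabla^2u(t)\|_{L^4}\lesssim t^{-1/2}\|u_0\|_{L^2}$, which just fails to be square integrable in time); it genuinely uses the $\wt B^0_{\rho_0,1}$ regularity, through $t\dot u\in L^\infty(L^2)$ (\eqref{eq:weighttt}) and $\sqrt t\,D\dot u\in L^1(L^2)$ (\eqref{eq:tuL1L1b}), interpolated. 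Second, the factor $\|\du\|_{L^4_t(L^4)}$, after Ladyzhenskaya's inequality \eqref{eq:lad}, re-introduces $\|\du\|_{L^\infty_T(L^2)}$ and $\|\nabla\du\|_{L^2_T(L^2)}$ on the right-hand side, so the clean domination $\|\nabla\du\|_{L^2_T(L^2)}\lesssim\|\nabla\dv\|_{L^2_T(L^2)}$ you assert does not come out. What one actually obtains is a \emph{coupled} pair of inequalities for $\|\nabla\du\|_{L^2_T(L^2)}$ and $\|\sqrt{\rho^1}\,\du\|_{L^\infty_T(L^2)}$ (cf.\ \eqref{eq:nabladu} and the subsequent estimate in the paper), which must be closed by choosing $T$ small (so that $\|\nabla u^i\|_{L^1_T(L^\infty)}\leq1$ and the coefficient of $\|\nabla\du\|$ can be absorbed) and then iterated over a finite partition of $\R_+$; this bootstrap is not optional.

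A secondary, fixable point: in your $L^\infty_T(L^2)$ transfer, the bound $\|\nabla u^2(t)\|_{L^\infty}\int_0^t\|\dv(\tau)\|_{L^2}\,d\tau\lesssim\|\dv\|_{L^\infty_T(L^2)}$ does not follow from $\nabla u^2\in L^1(\R_+;L^\infty)$ alone, since an $L^1$-in-time function has no pointwise control; you need the pointwise decay $t\|\nabla u^2(t)\|_{L^\infty}\leq C_{\rho_*,\rho^*}\|u_0^2\|_{L^2}$, which is available from \eqref{eq:decayLp1} with $q=\infty$ (the paper instead transfers the $L^2$ norm through a different identity involving $\int_1^2 Du^2(t,X^s)\frac{dX^s}{ds}\,ds$, bounded with $t^{3/4}\|Du^2(t)\|_{L^4}$, which again feeds $\|\nabla\du\|_{L^2_T(L^2)}$ into the estimate and is part of the same coupled system).
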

\begin{proof}
Although our regularity assumptions  are weaker,  we shall follow \cite{DMP} to bound the difference of the velocities. The starting point is the relation:
    $$\begin{aligned}\nabla_y\dv= K_1\!+\!K_2\!+\!K_3\with
    K_1(t,y)&:= \nabla_y\dX(t,y)\cdot\nabla_xu^2(t,X^2(t,y)),\\
    K_2(t,y)&:=  \nabla_yX^1(t,y)\cdot\nabla_x\du(t,X^2(t,y))\\
    \andf K_3(t,y):= \nabla_yX^1&(t,y)\cdot\bigl(\nabla_x u^1(t,X^2(t,y))-\nabla_x u^1(t,X^2(t,y))\bigr)\cdotp
    \end{aligned}$$
Since $\nabla\du(t,X^2(t,y))=A_1^\top(t,y) K_2(t,y)$ and  the flow $X^2$  is measure preserving,  the above decomposition 
implies that
$$\|\nabla\du\|_{L^2}\leq \|A_1\|_{L^\infty}\bigl(\|\nabla\dv\|_{L^2}+\|K_1\|_{L^2}+\|K_3\|_{L^2}\bigr)\cdotp$$
Bounding $K_1$ may be done as in \cite{DMP}. We get for all $t\geq0,$
$$\|K_1(t)\|_{L^2}\leq C\|\sqrt t\nabla u^2(t)\|_{L^\infty} \|\nabla\dv\|_{L^2_t(L^2)}.$$
For bounding $K_3,$ we use the relation
$$K_3(t,y)=\nabla X_1(t,y)\cdot\biggl(\int_1^2\bigl(\nabla^2 u^1(t,X^s(t,y))\bigr)\cdotp\Bigl(\frac{dX^s}{ds}(t,y)\Bigr)ds\biggr)$$
where the `interpolating flow' $X^s$  stands for the solution to 
$$X^s(t,y)=y+\int_0^t\bigl((2-s)u^1(\tau,X^s(\tau,y))+(s-1)u^2(\tau,X^s(\tau,y))\bigr)d\tau.$$
As $X^s(t,\cdot)$  is also measure preserving, it is easy to prove that (again, see \cite{DMP}):
$$\biggl\|\frac{dX^s}{ds}(t,\cdot)\biggr\|_{L^4}\leq C \|\du\|_{L^1_t(L^4)}.$$
Thanks to that and to H\"older inequality, we deduce that 
$$\|K_3(t)\|_{L^2}\leq C\bigl(1+\|\nabla u^1\|_{L^1_t(L^\infty)}) \|t^{3/4}\nabla^2 u^1(t)\|_{L^4}\|\du\|_{L^4_t(L^4)}.$$
Hence, in the end, if $T$ is chosen so that 
$$\max\biggl(\int_0^T\|\nabla u^1(t)\|_{L^\infty}\,dt,\; \int_0^T\|\nabla u^2(t)\|_{L^\infty}\,dt\biggr)\leq1,$$
then we have, using also \eqref{eq:stokesLp}
$$\|\nabla\du\|_{L^2_T(L^2)}\lesssim \bigl(1+\|\sqrt t\nabla u^2\|_{L_T^2(L^\infty)}\bigr)\|\nabla\dv\|_{L^2_T(L^2)}
+\|t^{3/4}\dot u^1\|_{L^2_T(L^4)} \|\du\|_{L^4_T(L^4)}.$$
The last term may be handled by means of \eqref{eq:lad}, and one ends up with
\begin{multline}\label{eq:nabladu}
\|\nabla\du\|_{L^2_T(L^2)}\lesssim \bigl(1+\|\sqrt t\nabla u^2\|_{L_T^2(L^\infty)}\bigr)\|\nabla\dv\|_{L^2_T(L^2)}
\\+\|t^{3/4}\dot u^1\|_{L^2_T(L^4)}^2 \|\sqrt{\rho^1}\du\|_{L^\infty_T(L^2)}.\end{multline}
Remember that the constructed solutions satisfy  $\sqrt t\nabla u^2\in L^2(\R_+;L^\infty)$ and note that, since 
$$\|t^{3/4}\dot u^1\|_{L^2_T(L^4)} \leq C \|t\dot u^1\|_{L^\infty_T(L^2)}^{1/2} \|\sqrt t D\dot u^1\|_{L^1_T(L^2)}^{1/2}, $$
Inequalities \eqref{eq:weighttt} and \eqref{eq:tuL1L1b} guarantee that  $t^{3/4}\dot u^1$ is in $L^2(\R_+;L^4).$ 
So we are left with bounding $\sqrt\rho^1 \du$  in $L^\infty(0,T;L^2).$  To do so, we use, as in \cite{DMP} the following relation:
$$\sqrt{\rho_{0}^1(y)}\dv(t,y)=\sqrt{\rho^1}(t,X^1(t,y))\biggl(\du(t,X^1(t,y))+\int_1^2 \!\!\!Du^2(t,X^s(t,y))\frac{dX^s}{ds}(t,y)\,ds\biggr)\cdotp$$
Hence, as all the flows $X^s$  are measure preserving and $\rho_1$ is bounded from below, 
$$\begin{aligned}
\|\sqrt{\rho^1(t)}\du(t)\|_{L^2}&\leq \|\sqrt{\rho_{0}^1}\dv(t)\|_{L^2} +C\sqrt{\rho^*}\|Du^2(t)\|_{L^4}\|\du\|_{L^1_t(L^4)}\\
&\leq \|\sqrt{\rho_{0}^1}\dv(t)\|_{L^2} +C\|t^{3/4}Du^2(t)\|_{L^4}\|\du\|_{L^4_t(L^4)}\\
&\leq \|\sqrt{\rho_{0}^1}\dv(t)\|_{L^2}\\&\quad +C\|\sqrt tDu^2(t)\|_{L^2}^{1/2}\|tD^2u^2(t)\|_{L^2}^{1/2}\|\nabla\du\|_{L^2_t(L^2)}^{1/2}
\|\sqrt{\rho^1(t)}\du\|_{L^\infty_t(L^2)}^{1/2}.
\end{aligned}$$
Since both the terms with $\sqrt t Du^2$ and with $t D^2u^2$ may be bounded in terms of $\rho_*,$ $\rho^*$ and 
$\|u_{0}^2\|_{L^2}$ only, 
we end up with 
$$\|\sqrt{\rho^1}\,\du\|_{L^\infty_T(L^2)}\leq 2 \|\sqrt{\rho_{0}^1}\,\dv\|_{L^\infty_T(L^2)} +C(\rho_*,\rho^*,\|u_{0}^2\|_{L^2})\|\nabla\du\|_{L^2_T(L^2)}.
$$
Putting this inequality together with \eqref{eq:nabladu} and remembering \eqref{eq:stabv} allows to conclude that there exists an absolute
constant $C$ such that for small  enough $T,$ we have
$$
\|\sqrt{\rho_{0}^1}\, \du\|_{L^\infty_T(L^2)}+\|\nabla\du\|_{L^2_T(L^2)}
\leq C\Bigl(\|\sqrt{\rho_0^1}\,\du_0\|_{L^2}+ \|\dr_0\|_{L^\infty}\Bigr),$$
then arguing by induction and using the bounds on $u^1$ and $u^2$ in terms of the data yields \eqref{eq:stabu}.  
\smallbreak
Finally, the difference between the (Eulerian) densities may be bounded by resorting to the classical theory of transport equation. 
Indeed, we have
$$\d_t\dr+\div(\dr\,u^2) =-\div(\rho^1 \du).$$
Hence, we may write for all $p\in[1,\infty]$ and  $t\geq0,$
$$\begin{aligned}\|\dr(t)\|_{\dot W^{-1,p}} &\leq\biggl(\|\dr_0\|_{\dot W^{-1,p}} + \int_0^t e^{-\int_0^\tau\|\nabla u^2\|_{L^\infty}\,d\tau'}\|\rho_1\du\|_{L^p}\,d\tau\biggr)e^{\int_0^t\|\nabla u^2\|_{L^\infty}\,d\tau}\\
&\leq\biggl(\|\dr_0\|_{\dot W^{-1,p}} + \rho^* t^{\frac12+\frac1p}  \|\du\|_{L^{\frac{2p}{p-2}}_t(L^p)}\biggr)e^{\int_0^t\|\nabla u^2\|_{L^\infty}\,d\tau}.
\end{aligned}$$
Combining Inequality  \eqref{eq:stabu} with Gagliardo-Nirenberg inequality provides us with a control of $\du$ in 
$L^{\frac{2p}{p-2}}(\R_+;L^p)$ for all $p\in[2,\infty).$ In the end, we get \eqref{eq:stabrho}. 
\end{proof}
\begin{remark}
In the bounded or torus cases, one can  take advantage of exponential decay to get a time independent 
bound. The details are left to the reader.
\end{remark}
	
	
\section{Appendix} 

Here we recall  some results that played a key role throughout the paper.
The first one is the following  Gagliardo-Nirenberg inequality that extends \eqref{eq:lad}: 
\begin{equation}\label{eq:GN}
\|z\|_{L^p}\leq C_p \|z\|_{L^2}^{2/p}\|\nabla z\|_{L^2}^{1-2/p}, \qquad 2\leq p<\infty.\end{equation}
It holds true with  the same constant in $\R^2$ and  for any   $z\in H^1_0(\Omega)$  
in a general domain $\Omega,$ or in the torus $\T^2$ \emph{provided the mean value of $z$ is zero.} 
In the torus case however, we rather are  in situations where 
$$\int_{\T^2} a z\,dx=0$$ 
for some nonnegative measurable function $a$ with  positive mean value (say $1$ with no loss of generality).
Then,    we claim that
\begin{equation}\label{eq:GNT}\|z\|_{L^p}\leq C_{p,a} \|z\|_{L^2}^{2/p}\|\nabla z\|_{L^2}^{1-2/p}
\with C_{p,a}:= C_p\log^{\frac{p-2}p}\bigl(e+\|a\|_{L^2}\bigr)\cdotp\end{equation}
Indeed, decomposing $z$ into $z=\bar z+\wt z$ with $\bar z:=\int_{\T^2} z\,dx,$  we have:
$$\begin{aligned}
\int_{\T^2} |z|^p\,dx&=\int_{\T^2} |z|^2|\wt z +\bar z|^{p-2}\,dx\\
&\lesssim  |\bar z|^{p-2}\|z\|_{L^2}^2+ \int_{\T^2}|z|^2|\wt z|^{p-2}\,dx\\
&\lesssim  |\bar z|^{p-2}\|z\|_{L^2}^2+ \|z\|_{L^p}^2\|\wt z\|^{p-2}_{L^p}.\end{aligned}$$
Now,  $\wt z$ is mean free and thus satisfies \eqref{eq:GN}.  Besides,  according to \cite[Ineq. (A.2)]{DM1},
$$|\bar z|\leq C\log\bigl(e+\|a\|_{L^2}\bigr)\|\nabla z\|_{L^2}.$$
 Hence
$$\|z\|_{L^p}^p\leq  C\log\bigl(e+\|a\|_{L^2}\bigr)\|\nabla z\|_{L^2}^{p-2}\|z\|_{L^2}^2 + C_p\|z\|_{L^p}^2
 \bigl(\|\wt z\|_{L^2}^{2/p}\|\nabla z\|_{L^2}^{1-2/p})^{p-2}.$$
 Then, \eqref{eq:GNT} follows from    $\|\wt z\|_{L^2}\leq \|z\|_{L^2}.$\qed
\medbreak
Next, we recall a well known  result for the inhomogeneous Stokes equations:
	\begin{equation}\label{eq:stokesinhomo}-\Delta w+\nabla Q=f\andf \div w=g\qquad \hbox{in }\ \Omega\end{equation}
with data $f\in L^p(\Omega)$ and $g\in \dot W^{1,p}(\Omega),$ $1<p<\infty.$
\smallbreak
In the bounded domain case (with $g$ having mean value $0$), it is known (see e.g. \cite{Galdi}) that \eqref{eq:stokesinhomo}
admits a unique solution $(w,\nabla Q)\in W^{2,p}(\Omega)\times L^p(\Omega)$ such that $w|_{\d\Omega}=0,$ and that the following 
bound holds true:
\begin{equation}\label{eq:stokesLp}\|\nabla^2w,\nabla Q\|_{L^p}\leq C\bigl(\|f\|_{L^p}+\|\nabla g\|_{L^p}\bigr)\cdotp\end{equation}
A similar result holds true in  $\Omega=\R^2$ or $\Omega=\T^2$  provided we consider only solutions 
such that $w\to0$ at infinity ($\R^2$ case) or $\int_{\T^2} a w\,dx=0$ 
for some nonnegative bounded  function $a,$ with mean value $1$ (torus case).  
Indeed: one can set 
$$\nabla Q=\cQ f\with\cQ:=-(-\Delta)^{-1}\nabla\div,$$
then solve the Poisson equation 
$-\Delta w=f+\nabla Q.$  Uniqueness is given by the supplementary conditions that are prescribed above. 
	\medbreak
	Finally, in the proof of stability and uniqueness, we used
	the following result. 
	 \begin{lemma}\label{lem:1} Assume that $\Omega$ is a $C^2$ bounded domain, the torus or the whole space. Then, there exists 
 a linear operator $\cB$ that maps $L^p$ to $L^p$ for all $p\in(1,\infty)$ such that 
 for all  $k\in L^p(\Omega;\R^d)$ (with mean value $0$ in the case $\Omega=\T^d$) we have  
 $$\div(\cB k)=\div k.$$ 
 Furthermore, if $\div k\in L^q(\Omega)$ for some $q\in(1,\infty),$ then we have $\cB k\in W^{1,q}_0(\Omega;\R^n)$ 
 with $\|\nabla\cB k\|_{L^q}\leq C \|\div k\|_{L^q}$ and if $k$  (seen as a function from $\R_+$ 
 to some space $L^r$ with $1<r<\infty$) 
 is differentiable for almost every $t\in\R_+,$
 then so does $\cB k,$ and  we have   $\|(\cB k)_t\|_{L^r}\leq C \|k_t\|_{L^r}$  for a.e. $t\in\R_+.$ 
 \end{lemma}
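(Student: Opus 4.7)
I would treat the three geometries separately: for $\Omega=\R^2$ and $\Omega=\T^2$, the operator $\cB$ can be written down explicitly as a Fourier multiplier (essentially the Helmholtz projector onto the gradient part of $k$); for a bounded $C^2$ domain, I would rely on the classical Bogovskii right inverse of the divergence.

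\textbf{Fourier cases.} Set
$$\cB k := -\nabla(-\Delta)^{-1}\div k,$$
which is well defined on $L^p(\Omega)$ (the zero-mean assumption on $k$ in the torus case ensuring the invertibility of $-\Delta$ on the relevant subspace). A direct computation yields
$$\div(\cB k) = -\Delta(-\Delta)^{-1}\div k = \div k.$$
Since each component of $\cB$ is a composition of two Riesz transforms, Calder\'on-Zygmund theory (or Mikhlin's multiplier theorem on the torus) gives $\|\cB k\|_{L^p}\leq C_p\|k\|_{L^p}$ for every $p\in(1,\infty)$. When in addition $\div k\in L^q$, $L^q$-regularity of the Poisson equation yields $\|\nabla^2(-\Delta)^{-1}\div k\|_{L^q}\leq C\|\div k\|_{L^q}$, which is precisely the desired bound on $\nabla\cB k$; membership of $\cB k$ in $W^{1,q}_0$ is automatic in these geometries.

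\textbf{Bounded domain.} I would write $\Omega$ as a finite union of bounded Lipschitz subdomains star-shaped with respect to balls, and build an operator $\cB_0$ by applying Bogovskii's explicit singular-integral formula on each piece and gluing with a partition of unity. This produces a linear operator $\cB_0$ mapping  mean-zero  $L^p$-functions into $W^{1,p}_0(\Omega;\R^2)$ and satisfying  $\div(\cB_0 f)=f$ together with $\|\nabla\cB_0 f\|_{L^p}\leq C\|f\|_{L^p}$ for every $p\in(1,\infty)$. A classical duality argument based on this $W^{1,p'}_0$-estimate extends $\cB_0$ uniquely to a bounded operator from the mean-zero subspace of $W^{-1,p}(\Omega)$ into $L^p(\Omega;\R^2)$, the identity $\div\cB_0 f=f$ being preserved distributionally. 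Setting $\cB k:=\cB_0(\div k)$ then gives the desired operator: in the applications of the lemma, the vector fields fed to $\cB$ always have vanishing normal trace, so $\div k$ automatically has mean value zero and no constant correction is needed.

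\textbf{Time derivative and main obstacle.} Since $\cB$ is linear and time-independent, $(\cB k)_t = \cB(k_t)$ for a.e.\ $t\in\R_+$, and the bound $\|(\cB k)_t\|_{L^r}\leq C\|k_t\|_{L^r}$ is an immediate consequence of the $L^r$-boundedness of $\cB$ established above. The main obstacle is the bounded-domain case, where $\cB$ has to act on arbitrary $L^p$ vector fields (whose divergence is a priori only in $W^{-1,p}$) while still producing an element of $W^{1,q}_0$ as soon as $\div k$ happens to lie in $L^q$. The duality extension of Bogovskii's operator is precisely what reconciles these two requirements; once it is in place, every remaining property follows with no extra work.
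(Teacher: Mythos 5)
Your treatment of the whole space and the torus, and of the time derivative, matches the paper: there the operator is exactly $\cB=-(-\Delta)^{-1}\nabla\div$, and the bound $\|(\cB k)_t\|_{L^r}\leq C\|k_t\|_{L^r}$ follows, as you say, from linearity and $L^r$-continuity (the paper writes out the difference-quotient limit, but the content is the same). For the bounded domain, however, the paper does not reprove anything: it invokes the reference [Danchin--Mucha, \emph{Divergence}] for the existence of $\cB$ and the first two properties, and your attempt to reconstruct that result contains a genuine gap at its key step.

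The gap is the claim that ``a classical duality argument based on the $W^{1,p'}_0$-estimate'' extends the Bogovskii operator $\cB_0$ to negative-order data. Duality applied to the classical bound $\cB_0:L^{p'}_0\to W^{1,p'}_0$ only yields boundedness of the \emph{transpose} operator $\cB_0^{t}$ from $W^{-1,p}$-type spaces to $L^p$; what you need is boundedness of $\cB_0$ itself on such spaces, i.e. (after dualizing) the estimate $\|\cB_0^{t}g\|_{W^{1,p'}}\lesssim\|g\|_{L^{p'}}$. Since the Bogovskii kernel is not symmetric, this is not a formal consequence of the positive-order estimate: it is a separate singular-integral theorem (this is precisely the content of Geissert--Heck--Hieber and of the construction in the cited divergence paper), and it is the whole point of the black box the paper appeals to. A second, related loose end is the consistency needed for the last assertion of the lemma: having defined $\cB k=\cB_0(F_k)$ with $F_k(\phi)=-\int_\Omega k\cdot\nabla\phi$, you must check that when $\div k\in L^q$ the extended operator returns the \emph{classical} Bogovskii solution, so that $\cB k\in W^{1,q}_0$ with $\|\nabla\cB k\|_{L^q}\lesssim\|\div k\|_{L^q}$; the two functionals $F_k$ and $\phi\mapsto\int_\Omega(\div k)\phi$ differ by a normal-trace boundary term, and your remark that ``in the applications the fields have vanishing normal trace'' is an observation about how the lemma is used, not a proof of the statement you were asked to establish. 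So the Fourier cases and the time-derivative argument stand, but the bounded-domain construction as sketched would not deliver the claimed mapping properties without importing the nontrivial negative-order Bogovskii estimates that the paper cites.
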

 \begin{proof}
 Whenever $\Omega$ is a $C^2$ bounded domain, the existence of $\cB$ as well as the first two properties have been established in \cite{DM-div}. 
 The third one stems from the fact that, owing to the continuity and linearity of $\cB,$ we may write in the $L^r$ meaning that
 $$ (\cB k)_t(t)=\lim_{h\to0} \frac{\cB k(t+h)-\cB k(t)}h=\lim_{h\to0} \cB\biggl(\frac{k(t+h)-k(t)}h\biggr)
 =\cB k_t. $$
 If $\Omega$ is the torus or the whole space, then one can just set $\cB:=-(-\Delta)^{-1}\nabla\div.$
  \end{proof}

   \begin{small}	 

\end{small}

\bigbreak

\noindent\textsc{Univ Paris Est Creteil, Univ Gustave Eiffel, CNRS, LAMA UMR8050, F-94010 Creteil, France
and Sorbonne Universit\'e, LJLL UMR 7598, 4 Place Jussieu, 75005 Paris}\par\nopagebreak
E-mail address: danchin@u-pec.fr

	\end{document}